\documentclass{amsart}

\usepackage{hyperref}
\usepackage{stmaryrd}
\usepackage{amssymb}

\newtheorem{defi}{Definition}[section]
\newtheorem{prop}[defi]{Proposition}
\newtheorem{thm}[defi]{Theorem}
\newtheorem{lem}[defi]{Lemma}
\newtheorem{cor}[defi]{Corollary}

\numberwithin{equation}{section}

\newcommand{\N}{\mathbb{N}}
\newcommand{\Z}{\mathbb{Z}}
\newcommand{\R}{\mathbb{R}}
\newcommand{\He}{\mathbb{H}}
\newcommand{\X}{\mathbb{X}}
\newcommand{\cA}{\mathcal{A}}

\newcommand{\cD}{\mathcal{D}}
\newcommand{\cH}{\mathcal{H}}
\newcommand{\cL}{\mathcal{L}}
\newcommand{\cP}{\mathcal{P}}
\newcommand{\cS}{\mathcal{S}}
\newcommand{\cY}{\mathcal{Y}}
\newcommand{\cc}{\text{cc}}
\newcommand{\Eucl}{\text{E}}
\newcommand{\Mass}{\operatorname{\mathbf M}}
\newcommand{\Norm}{\operatorname{\mathbf N}}

\newcommand{\Lip}{\operatorname{Lip}}

\newcommand{\Hol}{\operatorname{H}}

\newcommand{\spt}{\operatorname{spt}}
\newcommand{\diam}{\operatorname{diam}}

\newcommand{\defl}{\mathrel{\mathop:}=}

\newcommand{\im}{\operatorname{im}}

\newcommand{\B}{\operatorname B} %closed ball
\newcommand{\oB}{\operatorname U}  %open ball

\newcommand{\degr}[3]{\operatorname{deg}\left(#1,#2,#3\right)} % #1 = point, #2 = function, #3 = domain
 % #1 = function, #2 = point
\newcommand{\w}{\operatorname{w}} % #1 = function, #2 = point
 % #1 = function, #2 = point

\newcommand{\overbar}[1]{\mkern 1.5mu\overline{\mkern-1.5mu#1\mkern-1.5mu}\mkern 1.5mu}

\begin{document}

\title{Some results on maps that factor through a tree}
\author{Roger Z\"{u}st}
\address{Institut de Math\'ematiques de Jussieu, B\^atiment Sophie Germain, 75205 Paris, France}
\email{roger.zuest@imj-prg.fr}
%\date{\today}
\thanks{Supported by the Swiss National Science Foundation.}

\begin{abstract}
We give a necessary and sufficient condition for a map defined on a simply-connected quasi-convex metric space to factor through a tree. In case the target is the Euclidean plane and the map is H\"older continuous with exponent bigger than 1/2, such maps can be characterized by the vanishing of some integrals over winding number functions. This in particular shows that if the target is the Heisenberg group equipped with the Carnot-Carath\'eodory metric and the H\"older exponent of the map is bigger than 2/3, the map factors through a tree.
\end{abstract}

\maketitle

%\tableofcontents

\section{Introduction}

In these notes a \emph{tree} is a metric space $T$ that is uniquely arc-connected, i.e.\ for different points $p,p' \in T$ there is an embedding $\gamma : [0,1] \to T$ with $\gamma(0) = p$, $\gamma(1) = p'$ and any other such embedding is a reparameterization of $\gamma$. Let $\varphi : X \to Y$ be a uniformly continuous map between metric spaces. Depending on some conditions on $X$ we want to characterize those maps $\varphi$ that factor through a tree. We say that $\varphi$ has Property (T) if:
\begin{equation}
	\tag{T}
	\label{propertyt}
	\left\{ \
	\parbox{8.4cm}{For all $x,x' \in X$ with $\varphi(x) \neq \varphi(x')$ there is a point $y \in Y \setminus \{\varphi(x),\varphi(x')\}$ such that for any curve $\gamma: [0,1] \to X$ connecting $x$ with $x'$, $y$ is contained in $\im(\varphi \circ \gamma)$.}
	\right.
\end{equation}

Since a tree is uniquely arc-connected this property of $\varphi$ is necessary in order for it to factor through a tree. Depending on some conditions on $X$ it is also sufficient. To see that this doesn't work for any $X$ consider for example the unit circle in the complex plane and the map $x \mapsto x^2$. This map has Property~\eqref{propertyt} but it doesn't factor through a tree. If we additionally assume that the domain is simply-connected, this implication does hold. The terms used in the statements below will be clarified in the beginning of Section~\ref{section2}.

\begin{thm}
\label{treeintro}
Assume that $X$ is a $C$-quasi-convex metric space with $H_1(X) = 0$ or $H^{\Lip}_1(X) = 0$. Let $\varphi : X \to Y$ be a map that is $\sigma$-continuous and has Property~\eqref{propertyt}. Then there is a tree $(T,d_T)$ and surjective maps $\psi : X \to T$, $\overbar \varphi : T \to \im(\varphi)$ with $\varphi = \overbar \varphi \circ \psi$ and for all $x,x' \in X$,
\[
d_Y(\varphi(x),\varphi(x')) \leq d_T(\psi(x),\psi(x')) \leq \sigma(C d_X(x,x')) \, .
\]
The tree has the following additional properties:
\begin{enumerate}
	\item The metric $d_T$ is monotone on arcs, i.e.\ $d_T(p,p') \leq d_T(q,q')$ whenever $p$ and $p'$ are contained in the arc $[q,q']$ connecting $q$ with $q'$.
	\item $\dim(T,d_T) \leq 1$.
	\item For any $p \in T$ there is a contraction $\pi_p : T \times \R_{\geq 0} \to T$ with $\pi_p(q,t) \in [p,q]$, $\pi_p(q,0) = p$, $\pi_p(q,t) = q$ for $t \geq C\operatorname{dist}_X(\psi^{-1}(p),\psi^{-1}(q))$ and
	\[
	d_T(\pi_p(q,t),\pi_p(q',t')) \leq d_T(q,q') + \sigma(|t-t'|) \,.
	\]
\end{enumerate}
\end{thm}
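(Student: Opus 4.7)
The plan is to realize the tree as a metric quotient of $X$. Define a pseudometric
\[
\rho(x,x') \defl \inf_\gamma \diam_Y\bigl(\im(\varphi\circ\gamma)\bigr),
\]
the infimum taken over continuous curves $\gamma : [0,1]\to X$ from $x$ to $x'$; the triangle inequality follows from concatenation. Let $x \sim x'$ iff $\rho(x,x') = 0$, set $T \defl X/\sim$, and denote the quotient map by $\psi$ and the induced metric by $d_T$. The lower bound $d_Y(\varphi(x),\varphi(x')) \leq \rho(x,x')$ is immediate since $\varphi(x),\varphi(x') \in \im(\varphi\circ\gamma)$; this both yields the claimed lower estimate on $d_T$ and ensures that $\varphi$ descends to $\overbar\varphi:T \to \im(\varphi)$ with $\varphi = \overbar\varphi \circ \psi$. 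For the upper bound, $C$-quasi-convexity furnishes an arclength-parametrized curve of length at most $C d_X(x,x')$ whose image under $\varphi$ has diameter at most $\sigma(C d_X(x,x'))$ by $\sigma$-continuity, giving $d_T(\psi(x),\psi(x')) \leq \sigma(C d_X(x,x'))$.

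The crux is showing $T$ is uniquely arc-connected. For distinct $p = \psi(x)$, $q = \psi(x')$, iterated application of Property~\eqref{propertyt} produces a set $S(p,q)$ of separating points through which $\varphi \circ \gamma$ must pass for every curve $\gamma$ from $x$ to $x'$, and the parameter $\rho(x,\cdot)$ linearly orders $S(p,q) \cup \{p,q\}$ so as to define an arc $[p,q] \subset T$. \emph{Uniqueness} of this arc is the main obstacle, and is where the homological hypothesis enters. Suppose two embeddings $\alpha, \beta : [0,1] \to T$ from $p$ to $q$ diverged on some open subinterval; lifting them (or nearly optimal approximants for $\rho$) to curves $\tilde\alpha, \tilde\beta$ in $X$ and concatenating gives a loop, which by $H_1(X) = 0$ (or $H^{\Lip}_1(X) = 0$ in the Lipschitz variant) is filled by a singular $2$-chain, a Lipschitz disk in the latter case. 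Viewing this filling through $\varphi$ produces a one-parameter family of curves interpolating between the two images, and applying Property~\eqref{propertyt} to a separating point in the divergent region forces it to lie on both $\alpha$ and $\beta$, contradicting the divergence. Converting the homological hypothesis into this intermediate-value-type statement is the delicate step; in the topological case one must approximate the singular $2$-chain so as to run a continuous-selection argument, while in the Lipschitz case one has additional rectifiability to apply metric arguments directly.

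With arc-uniqueness established, the remaining properties follow with less work. Property~(1) is a direct consequence of the definition: if $p, p' \in [q,q']$, Property~\eqref{propertyt} forces every curve $\gamma$ in $X$ between representatives of $q$ and $q'$ to visit representatives of $p$ and $p'$, so splitting $\gamma$ appropriately yields $\diam(\varphi \circ \gamma) \geq d_T(p,p')$; taking the infimum gives $d_T(q,q') \geq d_T(p,p')$. Property~(2), $\dim(T,d_T) \leq 1$, then follows from the standard fact that a uniquely arc-connected metric space admitting arbitrarily fine coverings by subarcs has topological dimension at most one. For Property~(3), given $q = \psi(x')$ choose by $C$-quasi-convexity a curve $\gamma : [0,L] \to X$ of length $L \leq C \operatorname{dist}_X(\psi^{-1}(p), \psi^{-1}(q))$ from a representative of $p$ to $x'$, and define $\pi_p(q,t) \defl \psi(\gamma(\min(t,L)))$; arc-uniqueness gives independence from the choice of $\gamma$, and the required estimate on $d_T(\pi_p(q,t), \pi_p(q',t'))$ follows by comparing two such curves using $\sigma$-continuity and the triangle inequality, absorbing the time discrepancy $|t-t'|$ into the $\sigma(|t-t'|)$ term.
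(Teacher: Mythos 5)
Your overall strategy (quotient $X$ by the pseudometric $\rho(x,x')=\inf\diam_Y(\varphi\circ\gamma)$) is the same as the paper's, but the step you yourself flag as delicate --- uniqueness of arcs --- is genuinely incomplete, for two reasons. First, a filling of the concatenated loop by a singular (or Lipschitz) $2$-chain does \emph{not} produce ``a one-parameter family of curves interpolating between the two images'': $H_1(X)=0$ is strictly weaker than $\pi_1(X)=0$, and a null-homologous loop need not bound a homotopy, so there is no continuous selection of interpolating curves to run an intermediate-value argument on. Second, and more fundamentally, even granting such a family, Property~\eqref{propertyt} only hands you a point $y\in Y$ that every image curve must contain; it does not hand you a single point of $T$. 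The fibre $\varphi^{-1}(y)$ may have many connected components, hence many distinct points of $T$ lying over $y$, and a priori the curves $\alpha$ and $\beta$ could pass through \emph{different} components. The paper closes exactly this gap with Lemma~\ref{connectedcomp}: if $H_1(X)=0$ and a closed set $A$ disconnects $z$ from $z'$, then already some \emph{connected component} of $A$ disconnects them (proved via Zorn's lemma to extract a minimal disconnecting set and the Mayer--Vietoris sequence to show it is connected). This yields a single $q\in T$ through which every curve, and hence every arc in $T$, from $p$ to $p'$ must pass; an injective closed curve would then hit $q$ twice. Your proposal contains no substitute for this separation lemma, which is where the homological hypothesis is actually used.

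Two further points are asserted too quickly. For Property~(1), the quotient pseudometric is in general \emph{not} monotone on arcs: your claim that every curve between representatives of $q$ and $q'$ must visit representatives of arbitrary $p,p'\in[q,q']$ again presupposes that each such $p$ disconnects $q$ from $q'$ \emph{in $X$}, which only the separation lemma provides and only for points arising from $\cY(q,q')$, not for all points of the arc (e.g.\ points $p$ with $\overbar\varphi(p)=\overbar\varphi(q)$). The paper therefore replaces $D$ by $d_T(p,p')\defl\sup\{D(q,q'):[q,q']\subset[p,p']\}$ and reverifies the metric and tree properties. For Property~(3), the map $t\mapsto\psi(\gamma(\min(t,L)))$ depends on the chosen near-geodesic $\gamma$: arc-uniqueness fixes the \emph{image} but not the parameterization, and $\psi\circ\gamma$ need not be injective. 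The paper obtains a canonical parameterization of $[p,q]$ via loop erasure and reparameterization by $\sigma$-variation, and the estimate $d_T(\pi_p(q,t),\pi_p(q',t'))\leq d_T(q,q')+\sigma(|t-t'|)$ then uses the tripod structure of the tree together with monotonicity of $d_T$ on arcs --- none of which follows from ``comparing two such curves using $\sigma$-continuity and the triangle inequality.''
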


A similar result has been obtained by Wenger and Young for Lipschitz maps in case $Y$ is purely $2$-unrectifiable \cite[Theorem~5]{WY}. Like in the proof presented therein, the tree in the theorem above is constructed as a quotient of $X$. In particular if $X$ is compact, then $T$ consists of the connected components of preimages of points, see Lemma~\ref{compactcase}.

Since $X$ is quasi-convex any curve in $X$ can be uniformly approximated by Lipschitz curves. In Property~\eqref{propertyt} we therefore could additionally assume that $\gamma$ is Lipschitz. The restriction to points $x,x' \in X$ with $\varphi(x) \neq \varphi(x')$ will be very convenient in Proposition~\ref{windvanishprop} where we give a condition on a H\"older continuous map which is equivalent to \eqref{propertyt} by using the theory of currents. If further $Y$ is the Euclidean plane and applying a connection between currents and winding numbers, we have the following characterization of Property~\eqref{propertyt}.

\begin{thm}
\label{hoelderthm}
Let $X$ be a quasi-convex metric space with $H_1^{\Lip}(X) = 0$ and $\varphi : X \to \R^2$ be a H\"older continuous map of regularity $\alpha$. If $\alpha > \frac{2}{3}$, then $\varphi$ has Property~\eqref{propertyt} if and only if for all closed Lipschitz curves $\gamma : S^1 \to X$,
\[
\int_{\R^2} \w_{\varphi \circ \gamma}(q) \, dq = 0 \,.
\]
If $\alpha > \frac{1}{2}$, then $\varphi$ has Property~\eqref{propertyt} if and only if for all closed Lipschitz curves $\gamma : S^1 \to X$,
\[
\int_{\R^2} \w_{\varphi \circ \gamma}(q) \, dq = \int_{\R^2} q_x \w_{\varphi \circ \gamma}(q) \, dq = \int_{\R^2} q_y\w_{\varphi \circ \gamma}(q) \, dq = 0 \,,
\]
where $q = (q_x,q_y)$. In both cases $\varphi$ factors through a tree as in Theorem~\ref{treeintro}.
\end{thm}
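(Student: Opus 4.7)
The plan is to combine Theorem~\ref{treeintro} with the current-theoretic characterization promised in Proposition~\ref{windvanishprop}, translating its conditions into winding-number integrals via Stokes' theorem in $\R^2$. For any smooth polynomial $p$ on $\R^2$ and any smooth $1$-form $\omega$ with $d\omega = p(q)\,dq_x \wedge dq_y$, Stokes' theorem gives
\[
\int_{\R^2} p(q)\,\w_{\eta}(q)\,dq = \int_{\eta} \omega
\]
for a closed Lipschitz loop $\eta$ in $\R^2$; for $\eta = \varphi \circ \gamma$ one then expects $\int_\eta \omega = \int_\gamma \varphi^*\omega$ in a suitable H\"older pull-back sense. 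The two exponent thresholds in the statement are precisely the ranges in which these pull-back integrals are well defined.

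For the ``only if'' direction I would invoke Theorem~\ref{treeintro} to factor $\varphi = \overbar\varphi \circ \psi$ through a tree $T$ with $\dim(T) \leq 1$ and $\overbar\varphi$ being $1$-Lipschitz. Given a closed Lipschitz $\gamma$ in $X$, the induced loop $\psi \circ \gamma$ in the simply connected tree $T$ extends to a continuous $h : D^2 \to T$, and $\overbar\varphi \circ h : D^2 \to \R^2$ is a null-homotopy of $\varphi \circ \gamma$ with image contained in $\overbar\varphi(T) = \im(\varphi)$. Since $\overbar\varphi$ is Lipschitz and $T$ is $1$-dimensional, this image has zero planar Lebesgue measure. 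For every $q$ outside it, $\varphi \circ \gamma$ is null-homotopic in $\R^2 \setminus \{q\}$ and so $\w_{\varphi \circ \gamma}(q) = 0$; hence $\w_{\varphi \circ \gamma}$ vanishes almost everywhere and all three moment integrals are zero.

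For the ``if'' direction I would apply Proposition~\ref{windvanishprop}, which by its role in this paper reduces Property~\eqref{propertyt} for a H\"older map to the vanishing of $\int_\gamma \varphi^*\omega$ over an appropriate family of smooth $1$-forms $\omega$ on $\R^2$. In the range $\alpha > 2/3$, the hypothesis $H_1^{\Lip}(X) = 0$ lets me fill $\gamma$ by a Lipschitz $2$-chain whose H\"older pushforward is a well-defined $2$-current in $\R^2$; by the identity above, the single test form $\omega$ with $d\omega = dq_x \wedge dq_y$ suffices and yields the single moment condition. In the range $\alpha > 1/2$, only $1$-currents can be pushed forward, and the required family consists of primitives of the three $2$-forms $dq_x \wedge dq_y$, $q_x\,dq_x \wedge dq_y$, $q_y\,dq_x \wedge dq_y$, giving the three moment conditions. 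Converting each $\int_\gamma \varphi^*\omega$ back to its winding-number integral through Stokes delivers the equivalence, and a final application of Theorem~\ref{treeintro} produces the tree factorization.

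The main technical obstacle will be the rigorous justification of the Stokes-type identity in the H\"older setting: since $\varphi \circ \gamma$ is only $\alpha$-H\"older, $\int_\gamma \varphi^*\omega$ must be defined via a Young-type integral, and ensuring that exactly polynomial $1$-forms of degree at most one (for $\alpha > 1/2$) or constant ones (for $\alpha > 2/3$) arise as admissible test forms is where the two exponent thresholds enter essentially. Matching this family of admissible forms with the moment conditions on $\w_{\varphi \circ \gamma}$ is what makes the dichotomy $\alpha > 1/2$ versus $\alpha > 2/3$ look natural, and I expect this matching, already implicit in Proposition~\ref{windvanishprop}, to do most of the work.
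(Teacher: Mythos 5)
Your overall framework (Proposition~\ref{windvanishprop} plus the filling provided by $H_1^{\Lip}(X)=0$, with Lemma~\ref{windingcurrents} translating between currents and winding numbers) matches the paper's, but there is a genuine gap at the heart of the ``if'' direction. Proposition~\ref{windvanishprop} does \emph{not} reduce Property~\eqref{propertyt} to the vanishing of $\int_\gamma \varphi^*\omega$ for finitely many test forms $\omega$: it reduces it to the vanishing of the \emph{entire} winding number function $\w_{\varphi\circ\gamma}$ almost everywhere (equivalently, of the full pushforward current $(\varphi\circ\gamma)_\#\llbracket S^1\rrbracket$), for every closed Lipschitz curve. The whole content of the theorem is the passage from the one (resp.\ three) moment conditions to this much stronger vanishing, and your assertion that ``the single test form $\omega$ with $d\omega = dq_x\wedge dq_y$ suffices'' is precisely the statement that needs proof --- a $2$-current in $\R^2$ is not determined by its evaluation against $dq_x\wedge dq_y$ alone. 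The paper's mechanism is to take a Lipschitz filling $\Gamma : Q \to X$ of $\gamma$, apply the moment hypotheses to the restriction of $\varphi\circ\Gamma$ to the boundary of \emph{every} subsquare $R$ in a dyadic subdivision of $Q$ (this is where ``for all closed Lipschitz curves'' is used in an essential way), and then assemble these local data into $\int_{\R^2} f\,\w_{\varphi\circ\gamma} = 0$ for arbitrary smooth $f$ via Lemma~\ref{twodimcurrent} when $\alpha > \frac{2}{3}$, and via the rough-path-style second-order functional of Theorem~\ref{roughsurface} when $\alpha > \frac{1}{2}$. In the latter range the extra moments $\int q_x\w$, $\int q_y\w$ are exactly what kills the correction terms $\X^2_R$; no version of ``pushing forward the filling as a $2$-current'' is available there, which is why Theorem~\ref{roughsurface} has to be built from scratch. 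None of this subdivision-and-reassembly step appears in your sketch, and without it the argument does not close.

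A secondary point: in your ``only if'' direction the claim that $\overbar\varphi(T)$ has zero planar measure ``since $\overbar\varphi$ is Lipschitz and $T$ is $1$-dimensional'' is not valid --- topological dimension one does not bound the Hausdorff dimension of $(T,d_T)$, and a Lipschitz image of it could a priori have positive measure. The correct argument (used in Proposition~\ref{windvanishprop}) is that the extension from Corollary~\ref{extensioncor} satisfies $\im(F)=\im(\psi\circ\gamma)$, so the null-homotopy has image inside $\im(\varphi\circ\gamma)$, which is $\cH^2$-null because $\varphi\circ\gamma$ is H\"older of exponent $\alpha>\frac{1}{2}$ on the one-dimensional $S^1$. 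Since that direction is already contained in Proposition~\ref{windvanishprop}, you could simply cite it.
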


It was shown in \cite[Proposition~4.6]{Zt} that for a closed curve $\eta : S^1 \to \R^2$ of H\"older regularity $\alpha > \frac{1}{2}$, the winding number function $q \mapsto \w_{\eta}(q)$ is integrable and hence the integrals in the theorem above are well defined. Further, $\int \w_\eta^+$ and $\int \w_\eta^-$ are independent of the coordinate system in $\R^2$ in which we evaluate it because for an isometry $A$ of $\R^2$ it is $\w_{A\eta}(Aq) = \w_{\eta}(q)$. If we assume that $\int \w_\eta(q) \, dq = 0$, then the vector $\int q \w_\eta(q) \, dq$ has the geometric interpretation as $(c(\w_\eta^+) - c(\w_\eta^-))\int \w_\eta^\pm$, where $c(\w_\eta^+)$ and $c(\w_\eta^-)$ are the centers of mass of the densities $\w_\eta^+$ and $\w_\eta^-$ respectively. As such, the length of this vector and in particular the additional assumption for $\alpha > \frac{1}{2}$ in the statement above do also not depend on the coordinate system. These integrals of the winding number function are connected to the signature of curves as demonstrated in \cite[Theorem~1]{BNQ} for closed curves with bounded total variation. Actually, these terms represent the first few nontrivial entries in the logarithmic signature of the closed curves $\varphi \circ \gamma$. The theorem above follows from an integral formula for maps on a square that resembles similar formulas that appear in the theory of rough paths, see Theorem~\ref{roughsurface}.

Theorem~\ref{hoelderthm} is related to the H\"older problem for the Heisenberg group. Gromov showed in \cite{G} that there is no embedding of an open subset of the plane into the first Heisenberg group $\He$ equipped with the Carnot-Carath\'eodory metric $d_{\cc}$ that is H\"older continuous of regularity $\alpha > \frac{2}{3}$. The two theorems above strengthen this result.

\begin{thm}
	\label{heisenberggroupthm}
Let $X$ be a quasi-convex metric space with $H_1^{\Lip}(X) = 0$ and $\varphi : (X,d_X) \to (\He,d_{\cc})$ be a H\"older continuous map of regularity $\alpha > \frac{2}{3}$. Then $\varphi$ factors through a tree. If moreover $\dim(X) \geq 2$, then $\varphi$ can't be an embedding.
\end{thm}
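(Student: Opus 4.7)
The plan is to reduce to Theorem~\ref{hoelderthm} via the $1$-Lipschitz horizontal projection $\pi \colon (\He,d_\cc) \to \R^2$. Set $f \defl \pi \circ \varphi \colon X \to \R^2$, which is H\"older of the same exponent $\alpha > \frac{2}{3}$. For any closed Lipschitz curve $\gamma \colon S^1 \to X$, the composition $\eta \defl \varphi \circ \gamma$ is a closed H\"older curve in $\He$ of exponent $>\frac{2}{3}$, and the H\"older area formula from \cite{Zt} identifies its vertical increment with a constant multiple of $\int_{\R^2}\w_{\pi\eta}(q)\,dq$. Since $\eta$ is closed this increment vanishes, hence $\int_{\R^2}\w_{f\circ\gamma}(q)\,dq = 0$. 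The hypothesis of Theorem~\ref{hoelderthm} (case $\alpha > \frac{2}{3}$) is thus met, producing a factorization $f = \bar f \circ \psi$ with $\psi \colon X \to T$ surjecting onto a tree $T$ of dimension at most one, and $\bar f \colon T \to \R^2$ being $1$-Lipschitz.

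The next step is to show that $\varphi$ itself is constant on the fibers of $\psi$. Let $x_1,x_2 \in X$ with $\psi(x_1) = \psi(x_2)$, so in particular $\pi\varphi(x_1) = \pi\varphi(x_2)$ and $\varphi(x_1),\varphi(x_2)$ lie on the same vertical fiber of $\pi$. Quasi-convexity yields a Lipschitz curve $\gamma \colon [0,1] \to X$ from $x_1$ to $x_2$; its image $\psi\circ\gamma$ is a continuous loop in $T$, necessarily null-homotopic since $T$ is a tree. Consequently $\pi\varphi\gamma = \bar f\circ\psi\gamma$ is null-homotopic in $\bar f(T)$, a set of Hausdorff dimension at most one and hence Lebesgue null in $\R^2$. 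Therefore $\w_{\pi\varphi\gamma}(q) = 0$ for almost every $q$, so $\int\w_{\pi\varphi\gamma} = 0$. A second application of the area formula of \cite{Zt}, now to the $\He$-curve $\varphi\gamma$ (whose horizontal projection $\pi\varphi\gamma$ is closed in $\R^2$), identifies the vertical separation between $\varphi(x_1)$ and $\varphi(x_2)$ with a constant multiple of $\int\w_{\pi\varphi\gamma} = 0$; combined with $\pi\varphi(x_1) = \pi\varphi(x_2)$ this forces $\varphi(x_1) = \varphi(x_2)$.

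Hence $\varphi = \bar\varphi\circ\psi$ for some $\bar\varphi \colon T \to \He$, so $\varphi$ factors through the tree $T$. Property~\eqref{propertyt} for $\varphi$ is then immediate from the factorization, and Theorem~\ref{treeintro} applied to $\varphi$ produces a (possibly different) tree with all the properties listed there. The main technical hurdle lies in the H\"older area formula of \cite{Zt}, invoked twice, and precisely where the exponent threshold $\frac{2}{3}$ is essential: without it, the vertical part of an $\He$-valued H\"older curve need not be controlled by the winding number of its horizontal projection.

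For the final assertion, suppose $\varphi$ is an embedding. Then $\varphi$ is injective, and $\varphi = \bar\varphi\circ\psi$ forces $\psi$ to be injective. Since $\psi^{-1} = \varphi^{-1}\circ\bar\varphi$ is continuous on $\psi(X)$, $X$ is homeomorphic to the subset $\psi(X)$ of the one-dimensional tree $T$, contradicting $\dim(X)\geq 2$.
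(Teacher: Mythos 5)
Your route is genuinely different from the paper's. The paper works with the Kor\'anyi coordinates as a map $\tilde\varphi : X \to (\R^3,d_\Eucl)$, uses the identities \eqref{lifting} and \eqref{lifting2} inside Lemma~\ref{heisenbergsquare} to show $(\tilde\varphi\circ\Gamma)_\#\llbracket Q\rrbracket = 0$ for every Lipschitz $\Gamma : Q \to X$, and then feeds the vanishing of the boundary currents into Proposition~\ref{windvanishprop} to get Property~\eqref{propertyt} directly for $\tilde\varphi$; Theorem~\ref{hoelderthm} is never invoked. You instead project to the plane, verify the hypothesis of Theorem~\ref{hoelderthm} for $\pi\circ\varphi$ via the closed-curve case of \eqref{lifting} (correct: the vertical increment of a closed H\"older curve equals $\int\w$ of its horizontal projection, which must vanish), and then lift the resulting tree factorization from $\pi\circ\varphi$ back to $\varphi$ by showing $\varphi$ is constant on fibers of $\psi$. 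This second stage is the genuinely new content of your argument and is an attractive alternative, since it exhibits the tree for $\varphi$ as literally the tree of its horizontal projection. Your treatment of the non-embedding claim (injectivity of $\psi$, subspace monotonicity of topological dimension, $\dim T \leq 1$) is fine and is in fact more explicit than the paper, which leaves this step implicit.

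There is, however, one incorrectly justified step. You claim that $\bar f(T)$ has Hausdorff dimension at most one and is therefore Lebesgue null, and you use this to conclude $\w_{\pi\varphi\gamma} = 0$ a.e. from the null-homotopy of $\psi\circ\gamma$ in $T$. The bound $\dim(T,d_T) \leq 1$ is a \emph{topological} dimension bound; the Hausdorff dimension of $(T,d_T)$ is only controlled by $\dim_H(X)/\alpha$ (since $\psi$ is H\"older of exponent $\alpha$), and a $1$-Lipschitz image of such a tree in $\R^2$ can perfectly well have positive Lebesgue measure (glue uncountably many segments along a base segment to parameterize $[0,1]^2$ by a $1$-Lipschitz map from an $\R$-tree). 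The conclusion you need is nevertheless true, and the repair is exactly Corollary~\ref{extensioncor}, as in the proof of Proposition~\ref{windvanishprop}: the closed curve $\psi\circ\gamma$ in $T$ admits a continuous extension $\Gamma : \B^2(0,1) \to T$ with $\im(\Gamma) = \im(\psi\circ\gamma)$, so the degree of $\bar f\circ\Gamma$ vanishes off $\im(\bar f\circ\psi\circ\gamma) = \im(f\circ\gamma)$, which is the image of a single H\"older curve of exponent $>\tfrac12$ and hence $\cL^2$-null. With that substitution the fiber argument, and with it your whole proof, goes through.
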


A natural follow-up question is if the same conclusion also holds for $\alpha > \frac{1}{2}$. This would solve the H\"older problem for the Heisenberg group and show that there is no local homeomorphism from $\R^3$ to $\He$ of H\"older regularity $\alpha > \frac{1}{2}$. The statement is stronger than that and additionally would characterize these maps as locally factoring through a tree. A solution of this problem falls short because of the additional vanishing assumptions in Theorem~\ref{hoelderthm} for $\alpha > \frac{1}{2}$.

\section{Definitions and Preliminaries}
\label{section2}

Here we state some definitions and statements from the literature we will rely on and hope that the rest of these notes are reasonably self-contained.

A \emph{curve} in a topological space $X$ is a continuous map $\gamma : [a,b] \to X$ defined on some compact interval. An \emph{arc} in $X$ is the image of an injective curve. $X$ is called \emph{path-connected} if any two points in $X$ can be connected by a curve (being consistent with our terminology we should actually call it curve-connected, but this usage is not common). A metric space $X$ is called \emph{$C$-quasi-convex} if for any two points $x,x' \in X$ there is a curve $\gamma : [0,1] \to X$ connecting the two points and
\[
\operatorname{length}(\gamma) \leq C d_X(x,x') \,.
\]
By reparameterizing $\gamma$ by arc length we can assume that $\Lip(\gamma) \leq C d_X(x,x')$.

In metric geometry it is most common to work with geodesic trees. A \emph{geodesic tree} is a metric space in which any two points are connected by a unique geodesic, i.e.\ a curve of length equal to the distance of the two endpoints. Those trees we are working with here are more general but they are related to geodesic trees by a theorem of Mayer and Oversteegen.

\begin{thm}
[{\cite[Theorem~5.1]{MO}}]
\label{topthm}
If a metric space $T$ is uniquely arc-connected and locally arc-connected, then $T$ is homeomorphic to a geodesic tree.
\end{thm}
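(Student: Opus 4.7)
The plan is to equip $T$ with an auxiliary metric $\rho$, topologically equivalent to the original $d$, under which the unique arcs of $T$ become isometric to real intervals and hence geodesics. The new metric will be built from a scalar ``height function'' $\lambda : T \to [0,\infty)$ together with the tripod/median structure automatically present in any uniquely arc-connected space.

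First I would establish that for any three points $p,q,r \in T$, the three arcs $[p,q]$, $[q,r]$ and $[p,r]$ share a unique common point $m(p,q,r)$. This is a combinatorial consequence of unique arc-connectedness: one shows $[p,q] \subseteq [p,r] \cup [r,q]$, from which the last point of $[p,r]$ traversed from $p$ that still lies on $[p,q]$ must coincide with the first such point of $[r,q]$, and this common point is the median $m$. A corollary is that arcs emanating from a fixed basepoint $p_0$ are totally ordered by inclusion, since $[p_0, m(p_0,p,q)] = [p_0,p] \cap [p_0,q]$.

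The main step is then the construction of a continuous $\lambda : T \to [0,\infty)$ with $\lambda(p_0) = 0$ that is \emph{strictly} increasing along every arc $[p_0,p]$ parametrized away from $p_0$. I would pick a countable (or, in the non-separable case, transfinite) family of ``test points'' $\{p_\alpha\}$ dense in every arc from $p_0$ and, for each index, a continuous Urysohn-type bump $f_\alpha : T \to [0,1]$ that vanishes on the arc-component of $T \setminus \{p_\alpha\}$ containing $p_0$ and is positive beyond $p_\alpha$. A suitably weighted sum $\lambda := \sum_\alpha c_\alpha f_\alpha$ with $\sum_\alpha c_\alpha < \infty$ is continuous, and the total ordering of arcs from $p_0$ forces strict monotonicity along each such arc.

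I would then define $\rho(p,q) := \lambda(p) + \lambda(q) - 2\lambda(m(p,q,p_0))$. The tree four-point condition follows directly from the median axioms, so $\rho$ is a pseudometric; strict monotonicity of $\lambda$ along arcs from $p_0$ gives $\rho(p,q) = 0 \Leftrightarrow p = q$; and $\lambda$ restricted to each $[p_0,p]$ is a homeomorphism onto $[0,\lambda(p)]$, so every unique arc $[p,q]$ is realized as a $\rho$-geodesic by concatenation at $m(p,q,p_0)$. Continuity of $\lambda$ makes $(T,d) \to (T,\rho)$ continuous; local arc-connectedness supplies the reverse, since every $d$-neighborhood of $p$ contains a small arc-connected neighborhood on which $\lambda$ varies by little and which therefore has small $\rho$-diameter. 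The main obstacle is the construction of $\lambda$ itself — keeping it simultaneously $d$-continuous, finite-valued, and strictly monotone along every arc from $p_0$ is delicate at points of infinite branching, and it is precisely here that local arc-connectedness (allowing the supports of the bumps $f_\alpha$ to be chosen arc-connected and fine enough to separate arbitrarily close points on any prescribed arc) enters essentially.
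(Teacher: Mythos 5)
The paper does not prove this statement; it is quoted verbatim from Mayer--Oversteegen \cite[Theorem~5.1]{MO} and used as a black box, so your proposal can only be judged on its own merits. The skeleton is reasonable: the median $m(p,q,r)$ exists and is unique in any uniquely arc-connected space, and given a continuous height function $\lambda$ with the right properties, the formula $\rho(p,q)=\lambda(p)+\lambda(q)-2\lambda(m(p,q,p_0))$ does yield a $0$-hyperbolic metric in which the topological arcs are geodesics. But there are genuine gaps in the two steps that carry all the weight. First, the sum $\lambda=\sum_\alpha c_\alpha f_\alpha$ need not be monotone along arcs from $p_0$: each $f_\alpha$ is only required to be \emph{positive}, not non-decreasing, beyond $p_\alpha$, so for $x<y$ on an arc the terms with $p_\alpha\in[p_0,x]$ can contribute negatively and swamp the positive contribution of test points in $(x,y)$; the ``total ordering of arcs from $p_0$'' does not force monotonicity of the sum. (Also, in the non-separable case a transfinite sum with $\sum_\alpha c_\alpha<\infty$ has all but countably many $c_\alpha=0$, so on an uncountable star with basepoint at the center your $\lambda$ vanishes identically on all but countably many edges.)

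Second, and more fundamentally, the properties you require of $\lambda$ (continuity plus strict monotonicity along every arc from $p_0$) are \emph{not sufficient} for $\rho$ to induce the original topology. Your sentence addressing ``the reverse'' direction in fact re-proves the easy direction ($d$-small implies $\rho$-small); the hard direction is that a $\rho$-ball around $p$ must be contained in a prescribed $d$-neighborhood. Concretely, take $T\subset\R^2$ to be the spine $[0,2]\times\{0\}$ with vertical branches of length $1$ attached at the points $(1-\tfrac1n,0)$; this is uniquely and locally arc-connected. Put $\lambda(x,0)=x$ on the spine and let $\lambda$ increase by only $\epsilon_n\to 0$ along the $n$-th branch. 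This $\lambda$ is continuous and strictly increasing along every arc from $(0,0)$, yet the branch tips satisfy $\rho(\text{tip}_n,(1,0))=\tfrac1n+\epsilon_n\to 0$ while staying at $d$-distance about $1$ from $(1,0)$, so $\rho$ is not compatible with the topology. What is actually needed is a uniform local statement: for every $p$ and every neighborhood $U$ of $p$ there is $\epsilon>0$ such that $\lambda$ increases by at least $\epsilon$ along every arc that starts near $p$ and leaves $U$, uniformly over the (possibly uncountably many) branches. Producing a continuous, finite-valued $\lambda$ with this uniform growth property is precisely the hard content of the Mayer--Oversteegen theorem, and your construction neither identifies this property nor supplies it.
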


This in particular applies to the tree in Theorem~\ref{treeintro} because a tree with a monotone distance on arcs is locally arc-connected and by definition uniquely arc-connected. The dimension bound $\dim(T) \leq 1$ in Theorem~\ref{treeintro} holds for any of the three main topological dimensions; the small inductive dimension, the large inductive dimension and the Lebesgue covering dimension. This follows directly from the theorem above and \cite[Theorem~2.3]{MO}. More generally, a result of Lang and Schlichenmaier shows that geodesic trees with more than one point have Nagata dimension one \cite[Theorem~3.2]{LS} and this bounds from above the Lebesgue covering dimension \cite[Theorem~2.2]{LS} and hence also the other two topological dimensions mentioned earlier.

Let $\sigma : \R_{\geq 0} \to \R_{\geq 0}$ be a continuous and strictly increasing function with $\sigma(0) = 0$. A map $\varphi : X \to Y$ between metric spaces is \emph{$\sigma$-continuous} if for all $x,x' \in X$,
\[
d_Y(\varphi(x),\varphi(x')) \leq \sigma(d_X(x,x')) \,.
\]
Note that for any uniformly continuous map $\varphi : X \to Y$ there is such a $\sigma$ for which $\varphi$ is $\sigma$-continuous. For example,
\[
\omega(t) \defl \sup\{d_Y(\varphi(x),\varphi(x')) : d_X(x,x') \leq t\}
\]
is increasing and continuous at $0$ with $\omega(0) = 0$. The function $\sigma : \R_{\geq 0} \to \R_{\geq 0}$ defined by
\[
\sigma(t) \defl t + \frac{1}{t}\int_t^{2t} \omega(s) \, ds
\]
has all the properties we want and makes $\varphi$ into a $\sigma$-continuous map. With this observation we see that any uniformly continuous map is treated by Theorem~\ref{treeintro}. As a particular instance of $\sigma$-continuity, $\varphi : X \to Y$ is called \emph{H\"older continuous of regularity $\alpha > 0$} if there is a constant $C \geq 0$ such that for all $x,x' \in X$,
\[
d_Y(\varphi(x),\varphi(x')) \leq C d_X(x,x')^\alpha \,.
\]
The infimum over all such $C$ is $\Hol^\alpha(\varphi)$ and $\Hol^\alpha(X,Y)$ denotes the set of all H\"older continuous maps of regularity $\alpha$ from $X$ to $Y$. In case $Y = \R$ we abbreviate $\Hol^\alpha(X) \defl \Hol^\alpha(X,\R)$. For a sequence $(\varphi_k)$ in $\Hol^\alpha(X,Y)$ we write $\varphi_k \stackrel{\alpha}{\longrightarrow} \varphi$ if $\sup_{x \in X} d_Y(\varphi_k(x),\varphi(x)) \to 0$ and $\sup_k \Hol^\alpha(\varphi_k) < \infty$. It follows immediately that this limit satisfies $\Hol^\alpha(\varphi) \leq \liminf_k \Hol^\alpha(\varphi_n)$ and hence $\varphi \in \Hol^\alpha(X,Y)$. The following theorem collects some results of Young.

\begin{thm}
[{\cite{Y}}]
\label{StieltjesLemma}
Let $a \leq b$ and $0 < \alpha,\beta \leq 1$ with $\alpha + \beta > 1$. If $f \in \Hol^\alpha([a,b])$ and $g \in \Hol^\beta([a,b])$, then the Riemann-Stieltjes integral $\int_a^b f \, dg$ exists. Further:
\begin{enumerate}
	\item There is a constant $C_{\alpha,\beta}$, such that for all $c \in [a,b]$,
	\[
	\left|\int_a^b f \, dg - f(c)(g(b) - g(a))\right| \leq C_{\alpha,\beta}\Hol^\alpha(f)\Hol^\beta(g)|b-a|^{\alpha + \beta} \,.
	\]
	\item If $f$ and $g$ are Lipschitz, then
	\[
	\int_a^b f \, dg = \int_a^b f(t)\, g'(t) \, d\cL^1(t) \,.
	\]
	\item If $(f_k)$ and $(g_k)$ are sequences of functions on $[a,b]$ with $f_k \stackrel{\alpha}{\longrightarrow} f$ and $g_k \stackrel{\beta}{\longrightarrow} g$, then
	\[
	\int_a^b f_k \, dg_k \to \int_a^b f \, dg \,.
	\]
\end{enumerate}
\end{thm}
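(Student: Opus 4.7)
My plan is to follow Young's classical sewing argument, keyed on controlling the subadditivity defect of the germ $A(s,t) \defl f(s)(g(t) - g(s))$. The crucial identity is that for $s \leq u \leq t$,
\[
A(s,t) - A(s,u) - A(u,t) = (f(s) - f(u))(g(t) - g(u))\,,
\]
which by the H\"older hypothesis is bounded by $\Hol^\alpha(f)\Hol^\beta(g)(t-s)^{\alpha + \beta}$. Since $\alpha + \beta > 1$, this exponent exceeds $1$, and this gap is what drives the whole construction.

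Given a partition $P = \{a = t_0 < \cdots < t_n = b\}$, set $S_P \defl \sum_i A(t_i, t_{i+1})$. By pigeonhole there is an interior node $t_i$ with $t_{i+1} - t_{i-1} \leq 2(b-a)/(n-1)$; removing it changes $S_P$ by the quantity $(f(t_{i-1}) - f(t_i))(g(t_{i+1}) - g(t_i))$, whose H\"older bound is $O\bigl((b-a)^{\alpha+\beta}/(n-1)^{\alpha+\beta}\bigr)$. Iterating the removal until only the endpoints remain, the telescoping contributions are dominated by $\sum_{k \geq 1} k^{-(\alpha+\beta)}$, which converges precisely because $\alpha + \beta > 1$. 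This yields
\[
|S_P - f(a)(g(b)-g(a))| \leq C_{\alpha,\beta}\, \Hol^\alpha(f)\Hol^\beta(g)(b-a)^{\alpha+\beta}\,,
\]
giving (1) with base point $c = a$; for arbitrary $c$ I split $[a,b] = [a,c] \cup [c,b]$ and apply the estimate on each piece. The same argument applied subinterval-by-subinterval to $S_{P'} - S_P$ whenever $P'$ refines $P$ shows that $(S_P)$ is Cauchy as the mesh of $P$ tends to zero, so the Young integral exists as the common limit of Riemann-Stieltjes sums.

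For (2), since $g$ is Lipschitz it is absolutely continuous with $g(t) - g(s) = \int_s^t g'(r)\,d\cL^1(r)$. A Riemann-Stieltjes sum then equals $\int_a^b \tilde f(t)\,g'(t)\,d\cL^1(t)$ for the step function $\tilde f$ with value $f(\xi_i)$ on $[t_i, t_{i+1}]$, and bounded convergence delivers the claim. For (3), set $H \defl \sup_k \bigl(\Hol^\alpha(f_k) + \Hol^\beta(g_k)\bigr) < \infty$. Summing the estimate from (1) over the subintervals of $P$ yields
\[
\Bigl|S_P(f_k,g_k) - \int_a^b f_k\,dg_k\Bigr| \leq C_{\alpha,\beta}H^2(b-a)\,\delta(P)^{\alpha + \beta - 1}\,,
\]
where $\delta(P)$ is the mesh; since the exponent is positive this tends to zero uniformly in $k$, with the analogous bound holding for the limit integral. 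For each fixed $P$, $S_P(f_k,g_k) \to S_P(f,g)$ by uniform convergence. Letting $\delta(P) \to 0$ and then $k \to \infty$ concludes.

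The main technical obstacle is the bookkeeping in the sewing step: selecting at each stage via pigeonhole a node whose removal contributes little, and verifying that the resulting telescoping series collapses to a constant $C_{\alpha,\beta}$ depending only on the exponents and not on the particular partition. Once this single estimate is in hand, existence of the integral, the quantitative bound, the Lipschitz identity, and the continuity statement all reduce to standard limiting arguments.
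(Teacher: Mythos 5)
The paper offers no proof of this statement: it is quoted directly from Young \cite{Y} as a known result, so there is nothing internal to compare against. Your argument is the classical one (Young's original pigeonhole/point-removal scheme, nowadays packaged as the sewing lemma), and it is correct: the defect identity, the $2(b-a)/(n-1)$ pigeonhole bound, the convergence of $\sum_k k^{-(\alpha+\beta)}$, and the mesh-refinement Cauchy argument all check out, and parts (2) and (3) follow by the standard dominated-convergence and three-term limiting arguments you describe. Two small glosses worth tightening if you write this out in full: first, for general tags $\xi_i \in [t_i,t_{i+1}]$ in the Riemann--Stieltjes sums (not just left endpoints) you should note that retagging changes $S_P$ by at most $\Hol^\alpha(f)\Hol^\beta(g)\sum_i (t_{i+1}-t_i)^{\alpha+\beta}$, which vanishes with the mesh; second, splitting at $c$ and applying your base-point estimate on $[a,c]$ produces $f(a)(g(c)-g(a))$ rather than $f(c)(g(c)-g(a))$, so you need the extra (harmless) step $|f(a)-f(c)|\,|g(c)-g(a)| \leq \Hol^\alpha(f)\Hol^\beta(g)(c-a)^{\alpha+\beta}$ to move the base point, or else run the sewing argument with the right-endpoint germ $f(t)(g(t)-g(s))$ on that piece. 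Neither is a gap in substance.
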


This Riemann-Stieltjes integral over H\"older functions can be generalized to higher dimensions. For a square $Q \subset \R^2$ we denote by $\cP_n(Q)$ the partition of $Q$ into $4^n$ similar squares. Given functions $f,g_1,g_2 : Q \to \R$ we define the approximate functionals
\[
I_{Q,n}(f,g_1,g_2) \defl \sum_{R \in \cP_n(Q)} f(p_R) \int_{\partial R} g_1 \, dg_2 \,,
\]
for some predefined choice of points $p_R \in R$ and assuming the integrals in the sum make sense. They are to be understood as Riemann-Stieltjes integrals running counterclockwise around the boundary of the indicated square. In particular, if $g_1$ and $g_2$ are H\"older continuous as in Theorem~\ref{StieltjesLemma}, then $I_{Q,n}(f,g_1,g_2)$ is well defined for all $n$. The limit we obtain below does not depend on the choice of the points $p_R \in R$ and we will thus not refer to them specifically (we can fix $p_R$ to be the barycenter of $R$ for example). The following lemma is the two-dimensional case of \cite[Theorem 3.2]{Z}.

\begin{lem}
\label{twodimcurrent}
Let $f \in \Hol^\alpha(Q)$, $g_1 \in \Hol^{\beta_1}(Q)$ and $g_2 \in \Hol^{\beta_2}(Q)$. If $\alpha + \beta_1 + \beta_2 > 2$, then the limit
\[
I_{Q}(f,g_1,g_2) \defl \lim_{n \to \infty} I_{Q,n}(f,g_1,g_2)
\]
exists. Further, $I_Q$ satisfies and is uniquely defined by the following properties:
\begin{enumerate}
	\item $I_{Q}$ is linear in each argument,
	\item $I_{Q}(f,g_1,g_2) = \int_Q f \det D(g_1,g_2) \, d\cL^2$ if all three functions are Lipschitz,
	\item $I_{Q}(f_k,g_{1,k},g_{2,k}) \to I_{Q}(f,g_{1},g_{2})$ if $f_k \stackrel{\alpha}{\longrightarrow} f$ and $g_{i,k} \stackrel{\beta_i}{\longrightarrow} f$ for $i=1,2$.
\end{enumerate}
\end{lem}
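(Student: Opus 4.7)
The plan is to establish existence of the limit by showing that $(I_{Q,n}(f,g_1,g_2))_n$ is Cauchy, and then verify the three characterizing properties. The key telescoping argument compares $I_{Q,n}$ with $I_{Q,n+1}$: each square $R \in \cP_n(Q)$ subdivides into four subsquares $R_1,\ldots,R_4 \in \cP_{n+1}(Q)$, and since the Riemann--Stieltjes integrals along shared interior edges cancel (they are traversed in opposite directions), one has $\int_{\partial R} g_1\, dg_2 = \sum_{i=1}^4 \int_{\partial R_i} g_1\, dg_2$. Therefore
\[
I_{Q,n+1} - I_{Q,n} = \sum_{R \in \cP_n(Q)} \sum_{i=1}^4 \bigl(f(p_{R_i}) - f(p_R)\bigr) \int_{\partial R_i} g_1 \, dg_2 .
\]

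Next I would bound each $\int_{\partial R_i} g_1\, dg_2$ by a constant times $s_n^{\beta_1+\beta_2}$, where $s_n = 2^{-n}\operatorname{side}(Q)$. Since $g_2$ is single-valued, $\int_{\partial R_i} c\, dg_2 = 0$ for any constant $c$, so we may replace $g_1$ by $g_1 - g_1(p_{R_i})$ without affecting the integral. Applying Theorem~\ref{StieltjesLemma}(1) on each of the four sides of $R_i$ (choosing the base point at an endpoint) together with $|g_1 - g_1(p_{R_i})| \leq \Hol^{\beta_1}(g_1) (s_n)^{\beta_1}$ on $R_i$, one obtains a bound of the form $\bigl|\int_{\partial R_i} g_1\, dg_2\bigr| \leq C\Hol^{\beta_1}(g_1)\Hol^{\beta_2}(g_2) s_n^{\beta_1+\beta_2}$. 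Combined with $|f(p_{R_i}) - f(p_R)| \leq \Hol^{\alpha}(f) s_n^{\alpha}$ and the fact that $\#\cP_n(Q) = 4^n$, this yields
\[
|I_{Q,n+1}-I_{Q,n}| \leq C' \Hol^{\alpha}(f)\Hol^{\beta_1}(g_1)\Hol^{\beta_2}(g_2) \cdot 2^{n(2-\alpha-\beta_1-\beta_2)} .
\]
Since $\alpha + \beta_1 + \beta_2 > 2$, the bound is geometric, so the sequence is Cauchy and $I_Q(f,g_1,g_2)$ exists. The very same estimate, summed from $n=N$ to infinity, gives a quantitative comparison $|I_Q - I_{Q,N}|$ depending only on the Hölder seminorms.

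For the properties: linearity (1) is immediate from linearity of the approximants. For (2), when all three functions are Lipschitz, Green's theorem gives $\int_{\partial R} g_1\, dg_2 = \int_R \det D(g_1,g_2)\, d\cL^2$ on each square (using Theorem~\ref{StieltjesLemma}(2) to identify the Riemann--Stieltjes integral on each side with a Lebesgue integral), so each $I_{Q,n}$ is a Riemann sum for $\int_Q f \det D(g_1,g_2)\, d\cL^2$, and convergence of Riemann sums of a continuous integrand yields the identity. For the stability property (3), I would fix $\varepsilon>0$, use the quantitative estimate above with uniform Hölder bounds on $f_k,g_{1,k},g_{2,k}$ to pick $N$ so that $|I_Q(f_k,g_{1,k},g_{2,k}) - I_{Q,N}(f_k,g_{1,k},g_{2,k})| < \varepsilon$ independently of $k$ and similarly for the limit; then for fixed $N$, $I_{Q,N}$ is a finite linear combination of one-dimensional Riemann--Stieltjes integrals on sides of squares, each of which converges by Theorem~\ref{StieltjesLemma}(3) (the uniform convergence $f_k \to f$ handles the point evaluations $f(p_R)$). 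Uniqueness of any functional satisfying (1)--(3) then follows since Lipschitz functions are dense in each Hölder class in the $\stackrel{\alpha}{\longrightarrow}$-sense (mollify, say, after extending).

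The main obstacle is the boundary-integral estimate: one must be careful to subtract off the correct constant so that the $\beta_1$-Hölder modulus only sees the small diameter $s_n$ rather than $\diam(Q)$. This is exactly where the additive constant freedom $\int_{\partial R_i} c\, dg_2 = 0$ and the sharp form of Theorem~\ref{StieltjesLemma}(1) must be combined. Everything else is a bookkeeping argument in the style of Young's original construction, extended from one to two dimensions via the refinement scheme $\cP_n(Q) \to \cP_{n+1}(Q)$.
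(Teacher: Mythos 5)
Your proof is correct and follows essentially the same route as the source the paper cites for this lemma (\cite[Theorem~3.2]{Z}) and as the paper's own proof of Theorem~\ref{roughsurface}: dyadic refinement $\cP_n(Q)\to\cP_{n+1}(Q)$, cancellation along interior edges, subtraction of the constant $g_1(p_{R_i})$ so that Theorem~\ref{StieltjesLemma}(1) yields the bound $C\,s_n^{\beta_1+\beta_2}$ per child square, and the resulting geometric bound $2^{n(2-\alpha-\beta_1-\beta_2)}$ giving the Cauchy property, with the three listed properties and uniqueness obtained exactly as you describe. The only cosmetic slip is the phrase ``Riemann sums of a continuous integrand'' in the verification of (2): $\det D(g_1,g_2)$ is merely bounded measurable, but the argument goes through verbatim by controlling the oscillation of $f$ on each $R$ against the finite measure $\det D(g_1,g_2)\,d\cL^2$.
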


We will occasionally use properties of the mapping degree and the winding number respectively. Regarding those, everything we state here can be found for example in \cite{OR}. We mention here some relation between winding numbers and currents. An appropriate theory for currents in metric spaces was introduced by Ambrosio and Kirchheim in \cite{AK} extending the classical theory which is described with great detail in the monograph of Federer \cite{F}. Since for a large part we work with H\"older maps the currents that appear have in general infinite mass and we therefore mainly refer to the theory of Lang \cite{L} which does not rely on the finite mass assumption in its initial setting.

For $w \in L^1(\R^n)$ we write $\llbracket w \rrbracket$ for the current of finite mass in $\Mass_n(\R^n)$ obtained by integrating $n$-forms over $w$. For an oriented submanifold $M^m \subset \R^n$ we also denote by $\llbracket M \rrbracket \in \cD_m(\R^n)$ the $m$-dimensional current induced by integrating $m$-forms over $M$. If $X$ and $Y$ are metric spaces, $\varphi \in \Hol^\alpha(X,Y)$ for some $\alpha > \frac{n}{n+1}$ and $T \in \Norm_n(X)$ is a normal current with compact support, then $\varphi_\# T$ is a well defined current in $\cD_n(Y)$ by \cite[Theorem 4.3]{Z}. For $T = \llbracket S^1\rrbracket$ and $Y = \R^2$, there is some connection between the winding number function $q \mapsto \w_{\gamma}(q)$ of $\gamma : S^1 \to \R^2$ and the push-forward $\gamma_\# \llbracket S^1\rrbracket$ as noted in \cite[Proposition 4.6]{Zt}.

\begin{lem}
	\label{windingcurrents}
Let $\gamma \in \Hol^\alpha(S^1,\R^2)$ for $\alpha > \frac{1}{2}$. Then $\w_{\gamma}$ is integrable and $\llbracket \w_{\gamma} \rrbracket$ is the unique filling with compact support of $\gamma_\# \llbracket S^1 \rrbracket \in \cD_1(\R^2)$, i.e.\
\[
\partial \llbracket \w_{\gamma} \rrbracket = \gamma_\# \llbracket S^1 \rrbracket \,.
\]
Respectively, if $g = (g_1,g_2) \in \Lip(\R^2,\R^2)$, then
\[
\int_{\R^2} \w_{\gamma}(q) \det Dg(q) \, dq = \int_{S^1} g_1 \circ \gamma \, d(g_2 \circ \gamma) \,.
\]
If $\varphi \in \Hol^\alpha(Q,\R^2)$ for a square $Q \subset \R^2$ and $\alpha > \frac{2}{3}$ this can be combined with Lemma~\ref{twodimcurrent} to obtain,
\begin{equation}
\label{twothirdsformula}
\int_{\R^2} \w_{\varphi|\partial Q} f \det Dg  = \varphi_\# \llbracket Q \rrbracket(f\, dg_1 \wedge dg_2) = I_{Q}(f \circ \varphi, g_1 \circ \varphi, g_2 \circ \varphi) \,.
\end{equation}
for $f,g_1,g_2 \in \Lip(\R^2)$ and $g = (g_1,g_2)$.
\end{lem}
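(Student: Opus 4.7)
The plan is to assemble the statement from three prior ingredients: the author's earlier \cite[Proposition~4.6]{Zt}, Stokes' theorem at the level of currents, and the H\"older push-forward of \cite[Theorem~4.3]{Z} combined with Lemma~\ref{twodimcurrent}.

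First I would note that the integrability of $\w_\gamma$ together with the identity $\partial\llbracket\w_\gamma\rrbracket = \gamma_\#\llbracket S^1\rrbracket$ is precisely the content of \cite[Proposition~4.6]{Zt}. Briefly, one approximates $\gamma$ uniformly by piecewise affine closed curves $\gamma_k$ with uniformly bounded $\alpha$-H\"older constants, so that $\gamma_k \stackrel{\alpha}{\longrightarrow}\gamma$; applies the classical Lipschitz version of the identity to each $\gamma_k$; and passes to the limit using $\w_{\gamma_k} \to \w_\gamma$ in $L^1$ together with $\gamma_{k\#}\llbracket S^1\rrbracket \to \gamma_\#\llbracket S^1\rrbracket$ in the sense of \cite[Theorem~4.3]{Z}. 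Uniqueness of the filling among compactly supported $2$-currents in $\R^2$ follows from the constancy theorem, since the difference of two such fillings is a compactly supported $2$-cycle in $\R^2$ and must therefore vanish.

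The formula for $g = (g_1,g_2) \in \Lip(\R^2,\R^2)$ is then Stokes' identity at the level of currents. Since $dg_1\wedge dg_2 = d(g_1\,dg_2)$ and $g_1\,dg_2$ is a Lipschitz $1$-form, one computes
\begin{align*}
\int_{\R^2}\w_\gamma\det Dg\,dq
&= \llbracket\w_\gamma\rrbracket(dg_1\wedge dg_2)
= \partial\llbracket\w_\gamma\rrbracket(g_1\,dg_2) \\
&= \gamma_\#\llbracket S^1\rrbracket(g_1\,dg_2)
= \int_{S^1} g_1\circ\gamma\,d(g_2\circ\gamma),
\end{align*}
where the last equality is the definition of the H\"older push-forward supplied by \cite[Theorem~4.3]{Z}, which for $\alpha > \tfrac12$ evaluates the pulled back form by the Riemann-Stieltjes integral afforded by Theorem~\ref{StieltjesLemma}.

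For the final display, observe that $\varphi_\#\llbracket Q\rrbracket$ is a $2$-current supported in the compact set $\varphi(Q)$ with
\[
\partial\varphi_\#\llbracket Q\rrbracket = \varphi_\#\partial\llbracket Q\rrbracket = \varphi_\#\llbracket\partial Q\rrbracket = (\varphi|\partial Q)_\#\llbracket S^1\rrbracket,
\]
so by the uniqueness from the first paragraph, $\varphi_\#\llbracket Q\rrbracket = \llbracket\w_{\varphi|\partial Q}\rrbracket$; evaluating both sides on $f\,dg_1\wedge dg_2$ yields the first equality of \eqref{twothirdsformula}. The second equality is precisely how $\varphi_\#\llbracket Q\rrbracket$ is constructed in \cite[Theorem~4.3]{Z} under the hypothesis $3\alpha > 2$, namely as the functional $I_Q$ from Lemma~\ref{twodimcurrent} applied to $(f\circ\varphi, g_1\circ\varphi, g_2\circ\varphi)$. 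The only delicate point throughout is legitimizing $\partial\varphi_\# = \varphi_\#\partial$ and the evaluation of pulled back forms in the H\"older regime; both are furnished by the quoted results, so the argument reduces to assembly.
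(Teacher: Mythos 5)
Your proposal is correct and follows the same route the paper takes: the paper states this lemma as a preliminary, citing \cite[Proposition~4.6]{Zt} for the integrability of $\w_\gamma$ and the filling identity, deducing uniqueness from the constancy theorem, and obtaining \eqref{twothirdsformula} from the definition of the H\"older push-forward in \cite[Theorem~4.3]{Z} together with Lemma~\ref{twodimcurrent}. Your assembly via Stokes' identity $\partial T(g_1\,dg_2)=T(dg_1\wedge dg_2)$ and the identification $\varphi_\#\llbracket Q\rrbracket=\llbracket\w_{\varphi|\partial Q}\rrbracket$ is exactly the intended argument.
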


We denote by $H_m(X)$ the $m$th singular homology group and by $H_m^{\Lip}(X)$ the $m$th singular Lipschitz homology group. Every singular Lipschitz $m$-chain in $X$ represents an $m$-dimensional integral current, i.e.\ an element of $I_m(X)$, as defined in \cite{AK} or \cite{L}. More precisely if $c = \sum_i n_i \Gamma_i$ for a finite sum of integers $n_i$ and Lipschitz maps $\Gamma_i : \Delta^m \to X$ defined on the $m$-dimensional standard simplex $\Delta^m$, the current $\llbracket c \rrbracket \defl \sum_{i=1}^l n_i {\Gamma_i}_\# \llbracket \Delta^m \rrbracket$ is an element of $I_m(X)$. By definition it is clear that $\llbracket c + c'\rrbracket = \llbracket c\rrbracket + \llbracket c'\rrbracket$ and Stokes' theorem implies that $\llbracket \partial c \rrbracket = \partial \llbracket c \rrbracket$, see \cite{RS} for more details on this construction and some further results on the relation between homology groups and integral currents. In particular, if $H_1^{\Lip}(X)=0$ and $\gamma : S^1 \to X$ is Lipschitz, then there are finitely many Lipschitz maps $\Gamma_i : \B^2(0,1) \to X$ and integers $n_i$ such that 
\begin{equation}
\label{filling}
\gamma_\# \llbracket S^1 \rrbracket = \sum_i n_i \partial \left({\Gamma_i}_\# \llbracket \B^2(0,1) \rrbracket\right) \, .
\end{equation}
%More precisely, write $\gamma$ for the closed singular Lipschitz chain induced by the map $\gamma  : [0,1] \to X$ parameterized over $[0,1]$. Since $\partial \gamma = 0$ and $H_1^{\Lip}(X)=0$, there are finitely many Lipschitz maps $\Gamma_i : \Delta^2 \to X$ and integers $n_i$ such that $\gamma = \partial \sum_i n_i \Gamma_i$. Hence
%\[
%\llbracket S^1 \rrbracket = \biggl\llbracket \partial \sum_i n_i \Gamma_i \biggr\rrbracket = 
%\]

\section{Construction of the tree}

Let $\varphi : X \to Y$ be a continuous map between metric spaces $(X,d_X)$ and $(Y,d_Y)$ as in Theorem~\ref{treeintro}. In this section $d$ denotes the intrinsic metric on $X$. This means that $d(x,x')$ is the infimal length of all curves connecting $x$ with $x'$, see e.g.\ \cite[Chapter~2]{BBI} for properties of $d$. The resulting space $(X,d)$ is a length space, i.e.\ for any two points $x,x' \in X$ and any $\epsilon > 0$ there is a curve $\gamma_\epsilon$ with $\operatorname{length}(\gamma_\epsilon) \leq d(x,x') + \epsilon$ connecting $x$ and $x'$. Because $(X,d_X)$ is $C$-quasi-convex,
\begin{equation}
\label{geodesic}
d_X(x,x') \leq d(x,x') \leq C d_X(x,x') \,,
\end{equation}
for all $x,x' \in X$. Since $\sigma$ is increasing the first estimate shows that $\varphi$ is also $\sigma$-continuous with respect to $d$. Until the end of this section we work with the length metric $d$ instead of $d_X$.
\newline

Similarly as in the proof of \cite[Theorem~5]{WY} we define a pseudo-metric on $X$. For $x,x' \in X$,
\begin{equation}
\label{defD}
D(x,x') \defl \inf\left\{\diam(\varphi(C)) : x,x' \in C \text{ and } C \text{ is connected} \right\} \,.
\end{equation}

\begin{lem}
\label{lipestimate}
$D$ is a pseudo-metric on $X$ and moreover for all $x,x' \in X$,
\[
d_Y(\varphi(x),\varphi(x')) \leq D(x,x') \leq \sigma(d(x,x')) \,.
\]
\end{lem}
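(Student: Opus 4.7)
The plan is to verify the three assertions in turn: that $D$ is a pseudo-metric, the lower bound by $d_Y(\varphi(x),\varphi(x'))$, and the upper bound by $\sigma(d(x,x'))$. All three follow from elementary manipulations of the definition, so the proof is short.

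For the pseudo-metric properties, non-negativity and symmetry are immediate from~\eqref{defD}, and $D(x,x)=0$ follows by taking the connected set $C=\{x\}$. The only nontrivial axiom is the triangle inequality. Given $x,x',x''\in X$ and $\epsilon>0$, I would pick connected sets $C_1\ni x,x'$ and $C_2\ni x',x''$ with $\diam(\varphi(C_i))\leq D(x,x^{(i)})+\epsilon$. The union $C_1\cup C_2$ is connected because the two pieces share the point $x'$, and since $\varphi(x')\in\varphi(C_1)\cap\varphi(C_2)$, any two points of $\varphi(C_1\cup C_2)$ can be joined through $\varphi(x')$, giving $\diam(\varphi(C_1\cup C_2))\leq\diam(\varphi(C_1))+\diam(\varphi(C_2))$. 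Taking the infimum and letting $\epsilon\to 0$ yields $D(x,x'')\leq D(x,x')+D(x',x'')$.

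For the lower estimate, any connected $C$ in the infimum contains both $x$ and $x'$, so $\varphi(x),\varphi(x')\in\varphi(C)$ and $d_Y(\varphi(x),\varphi(x'))\leq\diam(\varphi(C))$; taking the infimum gives $d_Y(\varphi(x),\varphi(x'))\leq D(x,x')$.

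For the upper estimate, I would exploit the length-space structure of $(X,d)$. Given $\epsilon>0$, choose a curve $\gamma:[0,1]\to X$ joining $x$ to $x'$ with $\operatorname{length}(\gamma)\leq d(x,x')+\epsilon$, and take $C=\im(\gamma)$, which is connected. For any two points $\gamma(s),\gamma(t)\in C$, the intrinsic distance is controlled by the length of the sub-curve $\gamma|_{[s,t]}$, hence $d(\gamma(s),\gamma(t))\leq\operatorname{length}(\gamma)\leq d(x,x')+\epsilon$. Since $\varphi$ is $\sigma$-continuous with respect to $d$ (by the first inequality of~\eqref{geodesic} and monotonicity of $\sigma$), it follows that $d_Y(\varphi(\gamma(s)),\varphi(\gamma(t)))\leq\sigma(d(x,x')+\epsilon)$ and therefore $\diam(\varphi(C))\leq\sigma(d(x,x')+\epsilon)$. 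Taking the infimum and using continuity of $\sigma$ at $d(x,x')$ to send $\epsilon\to 0$ gives $D(x,x')\leq\sigma(d(x,x'))$. There is no real obstacle here; the only point that requires a moment of care is the replacement of $d_X$ by the intrinsic metric $d$ in the $\sigma$-continuity, which is exactly why the section begins by passing to $d$.
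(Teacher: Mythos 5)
Your proposal is correct and follows essentially the same route as the paper: the triangle inequality via the union of two connected sets sharing a point (with the diameter subadditivity coming from the nonempty intersection of the images), the lower bound from $\varphi(x),\varphi(x')\in\varphi(C)$, and the upper bound by taking $C$ to be the image of a near-shortest curve and using the $\sigma$-continuity of $\varphi$ with respect to the intrinsic metric $d$. Your slightly more explicit justification that $\diam(\varphi(\im\gamma))\leq\sigma(d(x,x')+\epsilon)$ is just an unpacking of the paper's one-line estimate.
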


\begin{proof}
For connected subsets $A,B \subset X$ with $x,x' \in A$ and $x',x'' \in B$ there holds $\diam(\varphi(A \cup B)) \leq \diam(\varphi(A)) + \diam(\varphi(B))$ because $\varphi(A) \cap \varphi(B)$ is nonempty. Similarly, because $A\cap B$ is nonempty, $A \cup B$ is connected and this immediately implies the triangle inequality for $D$. The first inequality is obvious since any set $C' \subset Y$ that contains both $\varphi(x)$ and $\varphi(x')$ satisfies $\diam(C') \geq d_Y(\varphi(x),\varphi(x'))$. For the second, take $C_\epsilon \defl \im(\gamma_\epsilon)$ for some curve $\gamma_\epsilon$ with $\operatorname{length}(\gamma_\epsilon) \leq d(x,x') + \epsilon$ connecting $x$ and $x'$ in $X$. Since $\varphi$ is $\sigma$-continuous,
\[
D(x,x') \leq \diam(\varphi(C_\epsilon)) \leq \sigma(\diam(C_\epsilon)) \leq \sigma(d(x,x') + \epsilon) \,.
\]
Taking the limit for $\epsilon \to 0$, the continuity of $\sigma$ implies the second estimate.
\end{proof}

Our candidate for the tree in Theorem~\ref{treeintro} is the set of equivalence classes $T \defl X/_\sim$, where $x \sim x'$ if $D(x,x') = 0$. We further denote by $\psi : X \to T$ the quotient map $\psi(x) \defl [x]$ and define $\overbar \varphi : T \to Y$ by $\overbar \varphi([x]) \defl \varphi(x)$. Note that $\overbar \varphi$ is well defined by the lemma above. An obvious choice for a metric on $T$ is $D$ itself, i.e.\ $D([x],[x']) \defl D(x,x')$. In the next part we will show that $(T,D)$ is indeed a tree.

\subsection{Proof that \textit{T} is a tree}
It follows from Lemma~\ref{lipestimate} that every point $p \in T$ represents a closed subset of $\varphi^{-1}(y)$ for some $y \in Y$. In particular, any connected component of $\varphi^{-1}(y)$ is contained in some $p \in T$. In case $X$ is compact this is actually a characterization of $T$. Although we will not use this fact in the process, we think that it is interesting to note anyway and add a proof for completeness sake.

\begin{lem}
\label{compactcase}
If $X$ is compact, then
\[
T = \{c : c \text{ is a connected component of } \varphi^{-1}(y) \text{ for some } y \in Y \} \,.
\]
\end{lem}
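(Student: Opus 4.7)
The plan is to prove the set equality by showing both inclusions, where the nontrivial content is that every equivalence class in $T$ is in fact a connected component of some fiber $\varphi^{-1}(y)$.

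\textbf{Easy direction.} Fix $y \in Y$ and let $c$ be a connected component of $\varphi^{-1}(y)$. For any $x, x' \in c$, the set $c$ itself is connected, contains both points, and satisfies $\diam(\varphi(c)) = 0$. Plugging $c$ into the infimum defining $D(x,x')$ shows $D(x,x') = 0$, so $x \sim x'$. Hence $c$ is contained in a single equivalence class $p \in T$.

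\textbf{Hard direction.} The main step is: if $D(x,x') = 0$, then $x$ and $x'$ lie in the same connected component of some $\varphi^{-1}(y)$. By Lemma~\ref{lipestimate}, $d_Y(\varphi(x),\varphi(x')) \leq D(x,x') = 0$, so $y \defl \varphi(x) = \varphi(x')$ is well defined. By definition of $D$, for each $n \in \N$ there is a connected set $C_n \subset X$ with $x,x' \in C_n$ and $\diam(\varphi(C_n)) < 1/n$; since $y \in \varphi(C_n)$, we have $\varphi(C_n) \subset \B(y,1/n)$. Set
\[
K_n \defl \overbar{\textstyle\bigcup_{m \geq n} C_m} \subset X \,.
\]
The union is connected because each $C_m$ contains $x$, so each $K_n$ is connected; each $K_n$ is closed and hence compact by compactness of $X$; and $(K_n)$ is decreasing. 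A decreasing intersection of nonempty compact connected subsets of a compact Hausdorff space is connected, so $K \defl \bigcap_n K_n$ is compact, connected, and contains $x, x'$. Continuity of $\varphi$ gives
\[
\varphi(K) \subset \varphi(K_n) \subset \overbar{\varphi({\textstyle\bigcup_{m\geq n}} C_m)} \subset \overbar{\B(y,1/n)}
\]
for every $n$, whence $\varphi(K) = \{y\}$. Thus $K$ is a connected subset of $\varphi^{-1}(y)$ containing both $x$ and $x'$, and they lie in the same connected component.

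\textbf{Combining.} Let $p \in T$ and pick any $x \in p$, set $y \defl \varphi(x)$, and let $c$ be the connected component of $\varphi^{-1}(y)$ containing $x$. The easy direction gives $c \subset p$; the hard direction shows that every $x' \in p$ satisfies $x' \in c$, so $p \subset c$. Hence $p = c$, and conversely every such $c$ is an equivalence class by the easy direction.

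The main obstacle is the hard direction, specifically extracting a single connected set realizing $D = 0$ from the approximating sequence $C_n$; here compactness of $X$ is essential, both to ensure that the $K_n$ are compact (so that their nested intersection is nonempty and connected) and to guarantee that $\bigcap_n \overbar{\B(y,1/n)} = \{y\}$ pins down the fiber.
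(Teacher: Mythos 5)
Your proof is correct, and the hard direction uses a genuinely different tool from the paper. The paper also reduces to the approximating connected sets $C_n$ (taken closed, hence compact), but then invokes Blaschke's theorem on the compactness of the hyperspace of nonempty compact subsets of $X$ under the Hausdorff distance: it extracts a subsequence of $c \cup c' \cup C_n$ converging to a compact set $C$, and uses the fact that a Hausdorff limit of connected sets is connected to conclude $C$ is a connected subset of $\varphi^{-1}(y)$ containing both components, forcing $c = c' = C$. You instead form the decreasing continua $K_n = \overbar{\bigcup_{m \geq n} C_m}$ and apply the nested-intersection theorem for compact connected Hausdorff sets. Both arguments are sound; yours is somewhat more elementary in that it avoids the hyperspace machinery and the auxiliary fact about Hausdorff limits of connected sets, at the cost of quoting the (equally standard) theorem that a decreasing intersection of continua is a continuum. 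A minor difference in bookkeeping: the paper's limit set automatically contains the full components $c$ and $c'$, so it concludes $c = c'$ as sets in one stroke, whereas your $K$ only a priori contains $x$ and $x'$ — but that is all you need, since it shows they lie in the same component, and your ``Combining'' paragraph correctly assembles the two inclusions.
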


\begin{proof}
As noted above, any connected component $c$ as in the statement is contained in some $p \in T$. On the other side let $c,c'$ be two such components with $D(x,x') = 0$ for some fixed $x \in c$ and $x' \in c'$. We want to show that $c = c'$. From Lemma~\ref{lipestimate} it follows that $\varphi(c) = \varphi(c') = \{y\}$ for some $y \in Y$ and from the definition of $D$ we obtain for any $n \in \N$ a connected subset $C_n \subset X$ with $x,x' \in C_n$ and $\varphi(C_n) \subset \B_Y(y,\frac{1}{n})$. By taking the closure, we can as well assume that $C_n$ is compact. By a theorem of Blaschke \cite{B}, the set
\[
\{K \subset X : K \text{ is compact and nonempty}\}
\]
equipped with the Hausdorff distance
\[
d_{\text{H}}(K,K') \defl \inf\{ \epsilon > 0 : K \subset \B_X(K',\epsilon), K' \subset \B_X(K,\epsilon) \}
\]
is a compact metric space. Applied to the situation at hand, there is a subsequence of $c \cup c' \cup C_n$ converging to some compact subset $C \subset X$. It is easy to check that a Hausdorff limit of connected sets is connected itself. $C$ is therefore compact and connected and moreover contains $c$ and $c'$. Since $\varphi$ is continuous we have $\varphi(C) = \{y\}$ and hence $c = c' = C$.
\end{proof}

As used in the proof above, it follows directly from the definition of $D$ that for any $x,x' \in X$ and any $\epsilon > 0$ there is a connected set $C_\epsilon \subset X$ with $x,x' \subset C_\epsilon$ and $\varphi(C_\epsilon) \subset \oB_Y(\varphi(x), D(x,x') + \epsilon)$. Because $(X,d)$ is a length space and $C_\epsilon$ is connected, any open neighborhood $\oB_X(C_\epsilon,\delta)$ of $C_\epsilon$ is Lipschitz path connected. Since $\varphi$ is uniformly continuous and by choosing $\delta$ small enough, there is a curve $\gamma_{x,x',\epsilon} : [0,1] \to X$ with $\gamma_{x,x',\epsilon}(0) = x$, $\gamma_{x,x',\epsilon}(1) = x'$ and
\begin{equation}
\label{connectset}
\im(\varphi \circ \gamma_{x,x',\epsilon}) \subset \oB_Y(\varphi(x),D(x,x') + \epsilon) \,.
\end{equation}

Given two points $x,x' \in X$ with $\varphi(x) \neq \varphi(x')$ let $\cY(x,x')$ be the set of all points $y \in Y$ such that for any curve $\gamma : [0,1] \to X$ connecting $x$ with $x'$, the point $y$ lies in $\im(\varphi \circ \gamma)$. Property~\eqref{propertyt} guarantees that apart from $\varphi(x)$ and $\varphi(x')$ the set $\cY(x,x')$ contains additional points. 

\begin{lem}
\label{connectcomp}
Let $\gamma_i : [0,1] \to X$, $i=1,2$, be two curves with $\gamma_1(t), \gamma_2(t) \in p_t \in T$ for $t = 0,1$ and $\overbar \varphi(p_0) \neq \overbar \varphi(p_1)$. Then $\cY(\gamma_1(0),\gamma_1(1)) = \cY(\gamma_2(0),\gamma_2(1))$.
\end{lem}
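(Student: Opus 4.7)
The plan is to show that moving the endpoints within a single class of $T$ does not change the set $\cY$, using the curves supplied by \eqref{connectset} to ``cap off'' any curve joining the new endpoints.

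Write $a_i \defl \gamma_i(0)$ and $b_i \defl \gamma_i(1)$, so $D(a_1,a_2) = 0 = D(b_1,b_2)$ by assumption, and consequently $\varphi(a_1) = \varphi(a_2)$, $\varphi(b_1) = \varphi(b_2)$, with these two values distinct thanks to $\overbar \varphi(p_0) \neq \overbar \varphi(p_1)$. By symmetry it suffices to prove $\cY(a_1,b_1) \subset \cY(a_2,b_2)$. Fix $y \in \cY(a_1,b_1)$. If $y \in \{\varphi(a_1),\varphi(b_1)\} = \{\varphi(a_2),\varphi(b_2)\}$ the conclusion is immediate since any curve from $a_2$ to $b_2$ realizes both of those values under $\varphi$ at its endpoints. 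So assume $y \notin \{\varphi(a_1),\varphi(b_1)\}$ and pick
\[
0 < \epsilon < \min\bigl\{d_Y(y,\varphi(a_1)),\, d_Y(y,\varphi(b_1))\bigr\}\,.
\]

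Because $D(a_1,a_2) = 0 = D(b_1,b_2)$, the discussion preceding \eqref{connectset} produces curves $\alpha_\epsilon \defl \gamma_{a_1,a_2,\epsilon}$ from $a_1$ to $a_2$ and $\beta_\epsilon \defl \gamma_{b_2,b_1,\epsilon}$ from $b_2$ to $b_1$ with
\[
\im(\varphi \circ \alpha_\epsilon) \subset \oB_Y(\varphi(a_1),\epsilon)\,, \qquad \im(\varphi \circ \beta_\epsilon) \subset \oB_Y(\varphi(b_1),\epsilon)\,.
\]
Given an arbitrary curve $\gamma : [0,1] \to X$ with $\gamma(0) = a_2$ and $\gamma(1) = b_2$, the concatenation $\alpha_\epsilon \ast \gamma \ast \beta_\epsilon$ is a curve from $a_1$ to $b_1$. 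Since $y \in \cY(a_1,b_1)$, it must lie in the image of $\varphi$ composed with this concatenation.

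By the choice of $\epsilon$, however, $y$ is not in $\im(\varphi \circ \alpha_\epsilon) \cup \im(\varphi \circ \beta_\epsilon)$, so $y$ is forced into $\im(\varphi \circ \gamma)$. As $\gamma$ was arbitrary, this gives $y \in \cY(a_2,b_2)$, completing the inclusion and hence the equality. The only potentially tricky point is the careful book-keeping of the trivial case $y \in \{\varphi(a_i),\varphi(b_i)\}$, which is otherwise excluded by \eqref{propertyt} but is formally allowed by the broader definition of $\cY$; the rest is a direct application of \eqref{connectset}.
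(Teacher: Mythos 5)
Your proof is correct and follows essentially the same route as the paper: cap off an arbitrary curve between the new endpoints with the curves from \eqref{connectset}, whose images under $\varphi$ stay in $\epsilon$-balls avoiding $y$, forcing $y$ into the image of the middle piece. The paper performs the concatenation in the opposite direction (capping $\gamma_1$ to join $\gamma_2(0)$ to $\gamma_2(1)$), but this is the same argument up to the symmetry you invoke.
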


\begin{proof}
For some fixed $y \in \cY(\gamma_2(0),\gamma_2(1))$ we want to show that $y$ is also in $\cY(\gamma_1(0),\gamma_1(1))$. If $y = \overbar \varphi(p_0)$ or $y = \overbar \varphi(p_1)$ we are done. So assume this is not the case and let $\epsilon > 0$ be small enough such that
\[
\epsilon < \min\{d_Y(\overbar \varphi(p_0),y),d_Y(\overbar \varphi(p_1),y)\} \,.
\]
Using the curves as in \eqref{connectset}, define the concatenated curve (read from left to right)
\[
\gamma_1' \defl \gamma_{\gamma_2(0),\gamma_1(0),\epsilon} \ast \gamma_1 \ast \gamma_{\gamma_1(1),\gamma_2(1),\epsilon} \,.
\]
$\gamma_1'$ connects $\gamma_2(0)$ with $\gamma_2(1)$ by going through $\gamma_1$. For $t = 0,1$, \eqref{connectset} implies,
\[
\im(\varphi \circ \gamma_{\gamma_2(t),\gamma_1(t),\epsilon}) \subset \oB_Y(\overbar\varphi(p_t),\epsilon) \subset Y \setminus \{y\} \,.
\]
Since $y \in \im(\varphi \circ \gamma_1')$ by assumption, we get that $y \in \im(\varphi \circ \gamma_1)$.
\end{proof}

This lemma allows to define $\cY(p,p')$ for $p,p' \in T$ in case $\overbar \varphi(p) \neq \overbar \varphi(p')$. The next result is the main reason for this particular definition of $T$ and precisely where we need $H_1(X) = 0$ or $H^{\Lip}_1(X) = 0$. It can be stated in both the continuous and Lipschitz category and we therefore don't prefer one homology group over the other. Note that for an open set in a locally (Lipschitz) path connected space, components and (Lipschitz) path components are the same. It is likely that this result is classical, but the author couldn't find a reference for it.

\begin{lem}
\label{connectedcomp}
Let $Z$ be a connected and locally (Lipschitz) path connected space with $H^{(\Lip)}_1(Z) = 0$. Assume that $A \subset Z$ is a closed set that disconnects $z$ and $z'$ in $Z$. Then there is a connected component of $A$ that disconnects $z$ and $z'$.
\end{lem}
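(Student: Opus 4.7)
The plan is to combine a Zorn's lemma minimality argument with a Mayer--Vietoris obstruction. Let $\mathcal{F}$ denote the family of closed subsets $B \subset A$ that still disconnect $z$ from $z'$, ordered by inclusion, and produce a minimal element $B \in \mathcal{F}$. I would then show $B$ is connected; since $B \subset A$, the connected component $C$ of $A$ containing $B$ is closed and, because $C \supset B$, still disconnects $z$ from $z'$, giving the sought component.

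For the Zorn step, given a chain $\{B_\alpha\}$ in $\mathcal{F}$ the natural lower bound is $B \defl \bigcap_\alpha B_\alpha$, a closed subset of $A$. To verify $B \in \mathcal{F}$, suppose instead there is a (Lipschitz) path $\gamma:[0,1] \to Z \setminus B$ from $z$ to $z'$. The open sets $\{Z \setminus B_\alpha\}$ are upward directed and union to $Z \setminus B$, so compactness of $\gamma([0,1])$ forces its image inside a single $Z \setminus B_\alpha$, contradicting that $B_\alpha$ disconnects $z$ from $z'$.

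To show that a minimal $B \in \mathcal{F}$ is connected, suppose for contradiction that $B = B_1 \sqcup B_2$ with disjoint nonempty closed subsets. By minimality neither $B_i$ disconnects $z$ from $z'$, so $z$ and $z'$ lie in a common (Lipschitz) path component of $U_i \defl Z \setminus B_i$. Since $B_1 \cap B_2 = \emptyset$ the pair $\{U_1,U_2\}$ is an open cover of $Z$ with $U_1 \cap U_2 = Z \setminus B$, a set in which $z$ and $z'$ are separated. The Mayer--Vietoris sequence
\[
H_1^{(\Lip)}(Z) \longrightarrow H_0^{(\Lip)}(U_1 \cap U_2) \longrightarrow H_0^{(\Lip)}(U_1) \oplus H_0^{(\Lip)}(U_2)
\]
combined with $H_1^{(\Lip)}(Z) = 0$ makes the second arrow injective, yet the class $[z]-[z']$ is nontrivial in $H_0^{(\Lip)}(U_1 \cap U_2)$ and maps to zero on the right, the sought contradiction.

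The main delicate point is to ensure that $H_0^{(\Lip)}$ really computes the free abelian group on (Lipschitz) path components for the three open sets involved, so that the final comparison of classes is legitimate. This relies on $Z$ being locally (Lipschitz) path-connected, a property inherited by open subsets, together with the standard subdivision argument producing Mayer--Vietoris for (Lipschitz) singular chains subordinate to an open cover. Once these are in place, the above argument delivers the desired connected component $C$ of $A$.
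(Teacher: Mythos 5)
Your proposal is correct and follows essentially the same route as the paper: Zorn's lemma applied to the family of closed disconnecting subsets of $A$, with the chain condition verified by a compactness argument on a connecting curve, and connectedness of a minimal element forced by the Mayer--Vietoris sequence $0 = H_1(Z) \to H_0(U_1 \cap U_2) \to H_0(U_1) \oplus H_0(U_2)$. The only cosmetic difference is that you verify the chain's lower bound by covering the curve's image with the directed complements rather than intersecting the curve with the chain members, and you make explicit the (correct) caveat that local (Lipschitz) path-connectedness is needed for $H_0$ to detect path components.
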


\begin{proof}
We will formulate the proof in the continuous category since the arguments in the Lipschitz case are the same. Consider the collection $\cA$ of closed subsets of $A$ that disconnect $z$ and $z'$. Set inclusion gives a partial order on $\cA$ and we want to show that there is a minimal element in $\cA$. By Zorn's lemma it suffices to find for any chain $\cA' \subset \cA$ a lower bound in $\cA$. Let $B \defl \bigcap \cA'$ and $C \subset X$ be the image of a curve that connects $z$ and $z'$. By definition, $B$ is closed and the intersection $C \cap A_1 \cap \cdots \cap A_n$ is nonempty for every finite collection $A_1,\dots,A_n \in \cA'$. This holds because $A_1 \cap \cdots \cap A_n = A_i$ for some $i$ since $\cA'$ is a chain and $A_i$ disconnects $z$ and $z'$ inside the connected set $C$. Since $C \cap A'$ is a nonempty closed set in the compact set $C$ for any $A' \in \cA'$, the intersection $C \cap B$ is nonempty too and hence $B \in \cA$.

So let $M$ be a minimal element of $\cA$. This $M$ has to be connected. Assume by contradiction that it is not and let $M_1$ and $M_2$ be a partition of $M$ into disjoint, nonempty, closed subsets. Set $U \defl X \setminus M_1$ and $V \defl X \setminus M_2$. Clearly, $X = U \cup V$ and the tail of the Mayer-Vietoris sequence reads as
\[
0 = H_1(X) \xrightarrow{\partial_*} H_0(U \cap V) \xrightarrow{(i_*,j_*)} H_0(U) \oplus H_0(V) \xrightarrow{k_* - l_*} H_0(X) = \Z,
\]
where $i : U\cap V \to U$, $j : U\cap V \to V$, $k : U \to X$ and $l : V \to X$ are the inclusions. Because $H_1(X) = 0$ and this sequence is exact, the homomorphism $(i_*,j_*)$ is injective. Since $z$ and $z'$ are disconnected by $M$, they represented different elements $[z]$ and $[z']$ in $H_0(U \cap V)$. It follows that $(i_*[z],j_*[z]) \neq (i_*[z'],j_*[z'])$ and hence $i_*[z] \neq i_*[z']$ or $j_*[z] \neq j_*[z']$. This means that $z$ and $z'$ are in different path components of $U$ or $V$, respectively, $M_1$ or $M_2$ disconnects $z$ and $z'$, contradicting the minimality of $M$. Therefore $M$ is connected and contained in some connected component of $A$.
\end{proof}

This result is used in the following lemma.

\begin{lem}
\label{disconnecter}
Let $p,p' \in T$ with $\overbar \varphi(p) \neq \overbar \varphi(p')$. Then for every $y \in \cY(p,p') \setminus \{\overbar \varphi(p), \overbar \varphi(p')\}$ there is some $q \in T$ with $\overbar \varphi(q) = y$ that disconnects $p$ from $p'$ inside $X$, i.e.\ every curve from $p$ to $p'$ in $X$ intersects $q$.
\end{lem}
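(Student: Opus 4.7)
The plan is to apply Lemma~\ref{connectedcomp} to the closed set $A := \varphi^{-1}(y) \subset X$, identify the resulting disconnecting component with an element $q \in T$, and then promote the disconnection from one chosen pair of representatives to arbitrary pairs via the detour curves from \eqref{connectset}.

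First, pick $x \in p$ and $x' \in p'$. By Lemma~\ref{connectcomp}, $y$ lies in $\cY(x,x')$, so every curve from $x$ to $x'$ in $X$ meets $A$. Since $(X,d)$ is a length space it is connected and locally (Lipschitz) path-connected, and by hypothesis $H_1(X) = 0$ or $H_1^{\Lip}(X) = 0$. Hence Lemma~\ref{connectedcomp} produces a connected component $M$ of $A$ that still disconnects $x$ from $x'$ in $X$. Because $M$ is connected and $\varphi(M) = \{y\}$, any two of its points have $D$-distance zero, so $M$ is contained in a single equivalence class $q \in T$ with $\overbar\varphi(q) = y$.

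To conclude, let $\tilde x \in p$, $\tilde x' \in p'$ and let $\gamma : [0,1] \to X$ be any curve connecting them. Choose $\epsilon > 0$ with $\epsilon < \min\{d_Y(\overbar\varphi(p), y), d_Y(\overbar\varphi(p'), y)\}$, and use \eqref{connectset} to pick curves $\gamma_{x,\tilde x,\epsilon}$ and $\gamma_{\tilde x', x', \epsilon}$ whose $\varphi$-images lie in $\oB_Y(\overbar\varphi(p), \epsilon)$ and $\oB_Y(\overbar\varphi(p'), \epsilon)$ respectively. The concatenation $\gamma' := \gamma_{x,\tilde x,\epsilon} \ast \gamma \ast \gamma_{\tilde x', x', \epsilon}$ connects $x$ to $x'$ and so must meet $M$. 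Since $M \subset \varphi^{-1}(y)$ and $y$ lies outside both $\epsilon$-balls by the choice of $\epsilon$, neither detour piece meets $M$; therefore $\gamma$ itself meets $M \subset q$, as required.

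The hard part is this last step: Lemma~\ref{connectedcomp} only yields a disconnection between the specific representatives $(x,x')$, and one needs the $Y$-smallness of the detours, together with the fact that $\varphi^{-1}(y)$ avoids both small balls around $\overbar\varphi(p)$ and $\overbar\varphi(p')$, to transfer the statement to an arbitrary pair in $p \times p'$.
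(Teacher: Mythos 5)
Your proof is correct and takes essentially the same route as the paper's: fix representatives $x\in p$, $x'\in p'$, apply Lemma~\ref{connectedcomp} to the closed set $\varphi^{-1}(y)$ to obtain a disconnecting component, note that this component sits inside a single class $q\in T$ with $\overbar\varphi(q)=y$, and then transfer the disconnection to arbitrary representatives. Your explicit final step using the detour curves of \eqref{connectset} is in fact slightly more careful than the paper's one-line appeal to Lemma~\ref{connectcomp}, which tacitly relies on the same concatenation argument.
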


\begin{proof}
Fix some points $x,x' \in X$ with $\psi(x) = p$ and $\psi(x') = p'$ and let $y \in \cY(x,x') \setminus \{\varphi(x), \varphi(x')\}$. By the definition of $\cY$, the two points $x$ and $x'$ are in different path components of $X \setminus \varphi^{-1}(y)$. As a length space $(X,d)$ is locally Lipschitz path connected and therefore Lemma~\ref{connectedcomp} implies that there is a connected component $c$ of $\varphi^{-1}(y)$ that disconnects $x$ and $x'$. By the construction of $T$, this set $c$ is contained in some $q \in T$ with $\overbar\varphi(q) = y$. Justified by Lemma~\ref{connectcomp} we identified $\cY(p,p') = \cY(x,x')$ and hence any curve connecting $p$ and $p'$ in $X$ intersects $q$.
\end{proof}

%For all $p,p' \in T$ with $\overbar \varphi(p) \neq \overbar \varphi(p')$ let $\cC(p,p')$ be the collection of all $q \in T$ for which any curve connecting $p$ with $p'$ in $X$ intersects $q$. The lemma above and Property~\eqref{propertyt} guarantee that there is some $q \in \cC(p,p') \setminus \{p,p'\}$. 
This result can be applied to curves in $T$ by constructing approximative lifts in $X$.

\begin{lem}
\label{lipapproximation}
Let $\gamma : [0,1] \to T$ be a curve connecting $p$ with $p'$ in $T$. Then there is a sequence of curves $\eta_n : [0,1] \to X$ such that $\psi\circ\eta_n(t) = \gamma(t)$ for $t = 0,1$ and $\psi\circ\eta_n$ converges uniformly to $\gamma$.

In particular, for all $p,p' \in T$ with $\overbar \varphi(p) \neq \overbar \varphi(p')$ there is some $q \in T \setminus \{p,p'\}$ such that any curve in $T$ connecting $p$ with $p'$ goes through $q$.
\end{lem}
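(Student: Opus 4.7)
The first assertion is an approximation argument, and the second is an immediate consequence of the first combined with Lemma~\ref{disconnecter}. I would build the approximating curves $\eta_n$ by lifting an increasingly fine partition of $[0,1]$ point by point to $X$ and then joining consecutive lifts by the short curves provided by \eqref{connectset}. The distance $D$ on $T$ is defined so that a piece of curve whose $\varphi$-image lies in a small ball becomes $D$-small after passing to $T$, which is exactly the control needed for uniform convergence in $T$.

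\textbf{Construction of $\eta_n$.} Since $\gamma:[0,1]\to(T,D)$ is continuous on a compact interval, it is uniformly continuous, so for each $n$ I would choose $N_n$ large enough that the partition $t_i=i/N_n$ satisfies $D(\gamma(t_{i-1}),\gamma(t_i))<1/n$ for all $i$. Fix once and for all $x_0\in\psi^{-1}(p)$ and $x_{N_n}\in\psi^{-1}(p')$, and for each intermediate $i$ pick any representative $x_i\in\psi^{-1}(\gamma(t_i))$. Because $(X,d)$ is a length space, the discussion leading to \eqref{connectset} supplies a curve $\gamma_{x_{i-1},x_i,1/n}:[t_{i-1},t_i]\to X$ (after reparameterization) joining $x_{i-1}$ and $x_i$ whose $\varphi$-image lies in $\oB_Y(\varphi(x_{i-1}),D(x_{i-1},x_i)+1/n)$. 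Concatenating these gives a curve $\eta_n:[0,1]\to X$ with $\eta_n(0)=x_0$ and $\eta_n(1)=x_{N_n}$, so $\psi\circ\eta_n$ agrees with $\gamma$ at the endpoints.

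\textbf{Uniform convergence.} For any $t\in[t_{i-1},t_i]$, the subcurve of $\gamma_{x_{i-1},x_i,1/n}$ from $x_{i-1}$ to $\eta_n(t)$ is a connected set containing both points, and its $\varphi$-image sits in a ball of radius $D(x_{i-1},x_i)+1/n<2/n$. From the very definition \eqref{defD} of $D$ this yields
\[
D(\eta_n(t),x_{i-1})\le 2\bigl(D(x_{i-1},x_i)+1/n\bigr)<4/n,
\]
so, combining with $D(\gamma(t_{i-1}),\gamma(t))<1/n$ and the triangle inequality in $T$, I obtain $D(\psi\circ\eta_n(t),\gamma(t))<5/n$ uniformly in $t$. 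This proves the first statement.

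\textbf{Second statement.} Assume $\overbar\varphi(p)\neq\overbar\varphi(p')$. Pick $x\in\psi^{-1}(p)$, $x'\in\psi^{-1}(p')$; by Property~\eqref{propertyt} there exists $y\in\cY(x,x')\setminus\{\overbar\varphi(p),\overbar\varphi(p')\}$, and Lemma~\ref{disconnecter} furnishes $q\in T$ with $\overbar\varphi(q)=y$ such that every curve in $X$ from $p$ to $p'$ meets $q$; in particular $q\notin\{p,p'\}$. Suppose for contradiction that some curve $\gamma:[0,1]\to T$ from $p$ to $p'$ avoids $q$. Then $\delta:=\inf_t D(\gamma(t),q)>0$ by compactness. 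Applying the first part, choose $n$ so large that $D(\psi\circ\eta_n(t),\gamma(t))<\delta$ for all $t$; then $\psi(\eta_n(t))\neq q$ for every $t$, i.e.\ $\eta_n(t)\notin q$ as a subset of $X$. This gives a curve in $X$ from $\eta_n(0)\in p$ to $\eta_n(1)\in p'$ missing $q$, contradicting Lemma~\ref{disconnecter}. The main subtle point is precisely this step: transferring a ``gap'' in $T$ into a ``gap'' in $X$, which works because $\psi$-distance in $T$ is controlled by diameters of connected $\varphi$-images, and the lifting curves $\gamma_{x_{i-1},x_i,\epsilon}$ produced by \eqref{connectset} have $\varphi$-images with exactly this control.
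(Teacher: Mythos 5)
Your proposal is correct and follows essentially the same route as the paper: partition $[0,1]$ finely with respect to the modulus of continuity of $\gamma$, lift the partition points to representatives in $X$, join them by the curves from \eqref{connectset}, and use the definition \eqref{defD} of $D$ on the connected sub-arcs to get the uniform bound $5/n$; the second part is the paper's argument in contrapositive form (the paper instead notes directly that $q\in\im(\psi\circ\eta_n)$ for all $n$ forces $q\in\im(\gamma)$ by uniform convergence). No gaps.
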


\begin{proof}
Since $\gamma$ is uniformly continuous, we can find for every $\delta > 0$ some $m \in \N$ such that $D(\gamma(t),\gamma(t')) < \delta$ if $|t-t'| \leq \frac{1}{m}$. In particular, for every $0 \leq i \leq m-1$ it is $D(\gamma(\frac{i}{m}),\gamma(\frac{i+1}{m})) < \delta$. Fix some points $x_i \in \gamma(\frac{i}{m}) \subset X$ and using the curves of \eqref{connectset} define the curve $\eta_\delta : [0,1] \to X$ by
\[
\eta_\delta(t) \defl \gamma_{x_{i},x_{i+1},\delta}(tm - i) \quad \mbox{for } 0 \leq i \leq m-1 \mbox{ and } t \in \left[\tfrac{i}{m},\tfrac{i+1}{m}\right] \,.
\]
It is stated in \eqref{connectset} that 
\[
\im\bigl(\varphi \circ \gamma_{x_{i},x_{i+1},\delta}\bigr) \subset \oB_Y\bigl(\overbar\varphi(\gamma(\tfrac{i}{m})),D(x_i,x_{i+1}) + \delta\bigr) \subset \oB_Y\bigl(\overbar\varphi(\gamma(\tfrac{i}{m})),2\delta\bigr) \,.
\]
Since images of curves are connected, the definition of $D$ implies for $t \in \left[\tfrac{i}{m},\tfrac{i+1}{m}\right]$,
\begin{align*}
\psi \circ \eta_\delta(t) \in \im\bigl(\psi \circ \gamma_{x_{i},x_{i+1},\delta}\bigr) \subset \oB_T\bigl(\gamma(\tfrac{i}{m}),4\delta\bigr) \,.
\end{align*}
Hence for $0 \leq i \leq m-1$ and $t \in \left[\frac{i}{m},\frac{i+1}{m}\right]$,
\[
D(\psi \circ \eta_\delta(t),\gamma(t)) \leq D(\psi \circ \eta_\delta(t),\gamma(\tfrac{i}{m})) + D(\gamma(\tfrac{i}{m}),\gamma(t)) \leq 5\delta \,.
\]
Choosing $\delta = \frac{1}{n}$ gives a sequence of curves as stated in the first part of the lemma.

For the second part let $p,p' \in T$ with $\overbar \varphi(p) \neq \overbar \varphi(p')$ and $\gamma : [0,1] \to T$ be a curve connecting $p$ with $p'$ in $T$. From Lemma~\ref{disconnecter} and Property~\eqref{propertyt} it follows that there is some $q \in T \setminus \{p,p'\}$ disconnecting $p$ and $p'$ in $X$. Now let $\gamma : [0,1] \to T$ be any curve connecting $p$ with $p'$. From the first part we obtain a sequence of curves $\eta_n : [0,1] \to X$ such that $\psi\circ\eta_n(0) = p$, $\psi\circ\eta_n(1) = p'$ and $\psi\circ\eta_n$ converges uniformly to $\gamma$. Since $q$ disconnects $p$ and $p'$ in $X$, every curve of the sequence $\eta_n$ intersects $q$. Hence $q \in \im(\psi\circ\eta_n)$ for all $n$ and thus $q \in \im(\gamma)$ because $\psi\circ\eta_n$ converges uniformly to $\gamma$. This holds for any such curve $\gamma$ and the lemma follows.
\end{proof}

So far we have only considered points in $T$ with different images when applied to $\overbar \varphi$. The next lemma justifies this assumption and shows that curves in $T$ are not completely degenerate in some sense.

\begin{lem}
\label{nondegenerate}
If $\gamma : [0,1] \to T$ is a non constant curve, then there is a $t \in [0,1]$ with $\overbar \varphi(\gamma(0)) \neq \overbar \varphi(\gamma(t))$.
\end{lem}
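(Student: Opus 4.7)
The plan is to argue by contradiction. Suppose that $\overbar\varphi(\gamma(t)) = \overbar\varphi(\gamma(0)) \defr y$ for every $t \in [0,1]$. Since $\gamma$ is non-constant, choose parameters $t_1 < t_2$ with $p \defl \gamma(t_1) \neq \gamma(t_2) \defr p'$, so in particular $D(p,p') > 0$. The goal is to show that for every $\delta > 0$ there is a connected set $C_\delta \subset X$ meeting both equivalence classes $p$ and $p'$ with $\diam(\varphi(C_\delta)) \leq 4\delta$; this would force $D(p,p') \leq 4\delta$ by the defining formula \eqref{defD} of $D$, and letting $\delta \to 0$ would yield $D(p,p')=0$, contradicting $p \neq p'$.

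The construction of $C_\delta$ mimics the first part of the proof of Lemma~\ref{lipapproximation}. By uniform continuity of $\gamma : [t_1,t_2] \to (T,D)$, pick a partition $t_1 = s_0 < s_1 < \cdots < s_m = t_2$ so that $D(\gamma(s_i),\gamma(s_{i+1})) < \delta$ for every $i$. Choose representatives $x_i \in \gamma(s_i) \subset X$ with $x_0 \in p$ and $x_m \in p'$. By \eqref{connectset} there exist curves $\gamma_{x_i,x_{i+1},\delta}$ in $X$ from $x_i$ to $x_{i+1}$ satisfying
\[
\im(\varphi \circ \gamma_{x_i,x_{i+1},\delta}) \subset \oB_Y\bigl(\overbar\varphi(\gamma(s_i)),\, D(x_i,x_{i+1}) + \delta\bigr) \subset \oB_Y(y,\, 2\delta),
\]
where the second inclusion combines the contradiction hypothesis $\overbar\varphi(\gamma(s_i)) = y$ with $D(x_i,x_{i+1}) = D(\gamma(s_i),\gamma(s_{i+1})) < \delta$. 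Letting $C_\delta \subset X$ be the image of the concatenation of the curves $\gamma_{x_i,x_{i+1},\delta}$, we obtain a connected set containing $x_0$ and $x_m$ with $\varphi(C_\delta) \subset \oB_Y(y,2\delta)$, hence $\diam(\varphi(C_\delta)) \leq 4\delta$.

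The main point that the argument exploits — and the only thing one must be careful about — is that every ball appearing in the nested inclusion above is centered at the \emph{same} basepoint $y$; this is exactly what the contradiction hypothesis provides. Without it, \eqref{connectset} would only yield $\delta$-control of the $\varphi$-image around a moving center on $\overbar\varphi \circ \gamma$, and the diameter of $\varphi(C_\delta)$ would not shrink as $\delta \to 0$. With the common center in hand, the contradiction is immediate from the definition of $D$, so I do not anticipate any further difficulty.
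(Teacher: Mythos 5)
Your proof is correct and follows essentially the same route as the paper: the paper invokes the approximate lifts $\eta_n$ from Lemma~\ref{lipapproximation} (which are built from exactly the concatenation of curves from \eqref{connectset} that you construct by hand) and uses that $\varphi\circ\eta_n$ stays in a small ball around $y$ to force $D(\gamma(0),\gamma(t))=0$. You simply inline that construction and apply the definition \eqref{defD} of $D$ directly to the connected set $C_\delta$, which is a valid and if anything slightly more self-contained presentation of the same argument.
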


\begin{proof}
Assume that $y = \overbar \varphi(\gamma(0)) = \overbar \varphi(\gamma(t))$ for all $t$. From Lemma~\ref{lipapproximation} it follows that there is a sequence of curves $\eta_n : [0,1] \to X$ with $\psi(\eta_n(t)) = \gamma(t)$ for $t = 0,1$ and $\psi \circ \eta_n$ converges uniformly to $\gamma$. Since $\overbar \varphi : T \to Y$ is uniformly continuous we get for any $\epsilon > 0$ that $\im(\varphi \circ \eta_n) \subset \oB_Y(y,\epsilon)$ for large enough $n$. For such an $n$ the definition of $D$ implies $D(\eta_n(0),\eta_n(t)) \leq 2\epsilon$ for all $t \in [0,1]$. Hence
\begin{align*}
D(\gamma(0),\gamma(t)) & \leq \limsup_{n\to\infty} D(\psi\circ\eta_n(0),\psi\circ\eta_n(t)) + D(\psi\circ\eta_n(t),\gamma(t)) \\
 & = \limsup_{n\to\infty} D(\eta_n(0),\eta_n(t)) + D(\psi\circ\eta_n(t),\gamma(t)) \leq 2\epsilon \,.
\end{align*}
This is true for any $\epsilon$ and it follows that $\gamma$ is constant.
\end{proof}

Now we are ready to show that $(T,D)$ is a tree.

\begin{prop}
	\label{treeprop}
$(T,D)$ is a tree. I.e.\ $(T,D)$ is an arc-connected metric space and if $\gamma_1,\gamma_2 : [0,1] \to T$ are two injective curves with the same endpoints, then the curves are reparameterizations of each other.
\end{prop}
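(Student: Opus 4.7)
The plan is to prove arc-connectedness by a soft topological argument and uniqueness of arcs by a disconnection argument using Lemmas~\ref{lipapproximation} and~\ref{nondegenerate}. For arc-connectedness, since $(X, d_X)$ is quasi-convex it is path-connected, so for any $p, p' \in T$ I lift to $x \in \psi^{-1}(p)$, $x' \in \psi^{-1}(p')$, join them by a curve in $X$, and push this curve down via $\psi$ to obtain a path in $T$ from $p$ to $p'$. Since $(T, D)$ is a metric space and hence Hausdorff, the classical fact that any path in a Hausdorff space between two distinct points contains an arc yields the required injective curve.

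For uniqueness, let $\gamma_1, \gamma_2 : [0,1] \to T$ be injective curves from $p$ to $p'$. It suffices to show $\im(\gamma_1) = \im(\gamma_2)$: each $\gamma_i$ is a continuous injection from compact $[0,1]$ to Hausdorff $T$, hence a homeomorphism onto its image, so $\gamma_2^{-1} \circ \gamma_1$ is an increasing self-homeomorphism of $[0,1]$ fixing the endpoints, which is the desired reparameterization. Suppose for contradiction that $q = \gamma_1(t_0) \in \im(\gamma_1) \setminus \im(\gamma_2)$ for some $t_0 \in (0,1)$, and consider the maximal open interval $(a, b) \ni t_0$ on which $\gamma_1$ misses $\im(\gamma_2)$ (it exists because $\gamma_1(0), \gamma_1(1) \in \im(\gamma_2)$). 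By continuity $\gamma_1(a), \gamma_1(b) \in \im(\gamma_2)$, say $\gamma_1(a) = \gamma_2(s_1)$ and $\gamma_1(b) = \gamma_2(s_2)$ with $s_1 \neq s_2$ by injectivity of $\gamma_1$.

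If $\overbar\varphi(\gamma_1(a)) \neq \overbar\varphi(\gamma_1(b))$, Lemma~\ref{lipapproximation} furnishes a $q' \in T \setminus \{\gamma_1(a), \gamma_1(b)\}$ lying on every curve from $\gamma_1(a)$ to $\gamma_1(b)$. Testing $q'$ against $\gamma_1|_{[a,b]}$ places $q' \in \gamma_1((a,b))$, while testing against the sub-arc of $\gamma_2$ from $\gamma_2(s_1)$ to $\gamma_2(s_2)$ places $q' \in \im(\gamma_2)$, contradicting $\gamma_1((a,b)) \cap \im(\gamma_2) = \emptyset$.

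I expect the main obstacle to be the case $\overbar\varphi(\gamma_1(a)) = \overbar\varphi(\gamma_1(b))$, where Lemma~\ref{lipapproximation} cannot be applied to the pair $(\gamma_1(a), \gamma_1(b))$ directly. My plan is first to invoke Lemma~\ref{nondegenerate} on the non-constant curve $\gamma_1|_{[a,b]}$ to produce some $c \in (a,b)$ with $\overbar\varphi(\gamma_1(c)) \neq \overbar\varphi(\gamma_1(a))$, and then apply Lemma~\ref{lipapproximation} to the pair $\gamma_1(a), \gamma_1(c)$. Testing the resulting disconnecting point $q''$ against $\gamma_1|_{[a,c]}$ forces $q'' \in \gamma_1((a, c))$, while testing against the concatenation of $\gamma_2|_{[s_1, s_2]}$ with the reverse of $\gamma_1|_{[c, b]}$ forces $q'' \in \im(\gamma_2) \cup \gamma_1((c, b])$; by the disjointness we land in $\gamma_1((c, b])$, contradicting injectivity of $\gamma_1$.
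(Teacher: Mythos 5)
Your proof is correct and takes essentially the same route as the paper: both pass to the maximal subinterval on which $\gamma_1$ misses $\im(\gamma_2)$ and derive a contradiction from Lemma~\ref{nondegenerate} together with the second part of Lemma~\ref{lipapproximation}. The paper merely packages your two cases into one step by forming an injective closed curve from the two pieces and observing that it would have to pass twice through the disconnecting point.
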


\begin{proof}
As an image of a path-connected space, $T$ is also path-connected. It is a standard fact that such a topological space is arc-connected (this is also a consequence of Lemma~\ref{makeinjective}). Let $\gamma_1$ and $\gamma_2$ be two injective curves as in the statement. We will show that $\im(\gamma_1) = \im(\gamma_2)$. Assume by contradiction that there is a $t \in [0,1]$ with $\gamma_1(t) \notin \im(\gamma_2)$. By continuity there are $t_1 < t < t_2$ such that $\gamma_1(t) \notin \im(\gamma_2)$ for all $t \in \left]t_1,t_2\right[$ but $\gamma_1(t_i) \in \im(\gamma_2)$ for $i=1,2$. Concatenating the part of $\gamma_1$ from $t_1$ to $t_2$ with the backward parameterization of $\gamma_2$ connecting $\gamma_1(t_1)$ and $\gamma_1(t_2)$ we get an injective closed curve $\gamma : S^1 \to T$. By Lemma~\ref{nondegenerate} there are two poins $s,s' \in S^1$ with $\overbar\varphi(\gamma(s)) \neq \overbar\varphi(\gamma(s'))$. The second part of Lemma~\ref{lipapproximation} implies that there is some $q \in T\setminus \{\gamma(s),\gamma(s')\}$ disconnecting $\gamma(s)$ and $\gamma(s')$ inside $T$. Hence $\gamma$ has to go twice through $q$, a contradiction. This shows that $\im(\gamma_1) \subset \im(\gamma_2)$ and vice versa. Both curves are injective, therefore $\gamma_1 \circ \gamma_2^{-1}$ is a homeomorphism of $[0,1]$ and a reparameterization from $\gamma_2$ to $\gamma_1$.
\end{proof}

\subsection{Monotone metric and sigma-variation}
\label{monotone}

For $p,p' \in T$ we denote by $[p,p']$ a parameterization of the arc in $T$ connecting $p$ with $p'$. By abuse of notation we also use $[p,p']$ for the image of this curve, the arc itself. The metric $D$ may not be monotone on arcs. For this reason we introduce a new metric $d_T$ on $T$ defined by
\[
d_T(p,p') \defl \sup \{ D(q,q') : [q,q'] \subset [p,p']\} \,.
\]
It is not so hard to check that $d_T$ is indeed a metric on $T$. This follows from the fact that arcs are compact and for all $p,p',p'' \in T$ the arc $[p,p'']$ is contained in the union $[p,p'] \cup [p',p'']$.

\begin{lem}
\label{lipestimate2}
For all $x,x' \in X$,
\[
d_Y(\varphi(x),\varphi(x')) \leq d_T(\psi(x),\psi(x')) \leq \sigma(d(x,x')) \,.
\]
Further, $(T,d_T)$ is a tree.
\end{lem}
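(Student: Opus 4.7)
The plan is to deduce both inequalities from Lemma~\ref{lipestimate} together with the tree structure of $(T,D)$ established in Proposition~\ref{treeprop}, and then transfer the tree property from $D$ to $d_T$ by exploiting $d_T\geq D$. The lower bound is immediate: taking $[q,q']=[\psi(x),\psi(x')]$ in the supremum defining $d_T$ yields $d_T(\psi(x),\psi(x'))\geq D(\psi(x),\psi(x'))$, and Lemma~\ref{lipestimate} supplies the remaining inequality $D(\psi(x),\psi(x'))\geq d_Y(\varphi(x),\varphi(x'))$.

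For the upper bound, I would fix $\epsilon>0$ and pick a curve $\eta_\epsilon:[0,1]\to X$ from $x$ to $x'$ of length at most $d(x,x')+\epsilon$. Since $(T,D)$ is uniquely arc-connected, any curve in $T$ between two points contains the unique arc between them (a standard consequence extractable from Lemma~\ref{makeinjective}); in particular $\psi\circ\eta_\epsilon$ hits every point of $[\psi(x),\psi(x')]$. Given any sub-arc $[q,q']\subset[\psi(x),\psi(x')]$ one therefore finds $s\leq s'$ in $[0,1]$ with $\psi(\eta_\epsilon(s))=q$ and $\psi(\eta_\epsilon(s'))=q'$, so that $C\defl\eta_\epsilon([s,s'])$ is a connected subset of $X$ containing representatives of $q$ and $q'$. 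The $\sigma$-continuity of $\varphi$ then gives
\[
D(q,q')\leq\diam(\varphi(C))\leq\sigma(\operatorname{length}(\eta_\epsilon))\leq\sigma(d(x,x')+\epsilon).
\]
Taking the supremum over all such sub-arcs and sending $\epsilon\to 0$ with the continuity of $\sigma$ concludes the estimate.

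For the tree property of $(T,d_T)$ I would use that $d_T\geq D$ makes the identity $(T,d_T)\to(T,D)$ $1$-Lipschitz, so every $d_T$-continuous curve is also $D$-continuous. Arc-connectedness: join representatives of two given points $p,p'\in T$ by a Lipschitz curve $\eta$ in $X$ (available by quasi-convexity); the upper bound just proved makes $\psi\circ\eta$ a $d_T$-continuous curve from $p$ to $p'$, and Lemma~\ref{makeinjective} extracts an injective reparameterization. Unique arc-connectedness: two injective $d_T$-curves with the same endpoints are a fortiori two injective $D$-curves with the same endpoints, so by Proposition~\ref{treeprop} they share their image; compactness of $[0,1]$ and the Hausdorff property of $(T,d_T)$ then upgrade each to a $d_T$-homeomorphism onto that image, and their composition is the required reparameterization.

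The only genuinely delicate step is the upper bound, where the essential point is not metric but topological: the tree structure of $(T,D)$ forces any $X$-level lift $\eta_\epsilon$ to parameterize every sub-arc of $[\psi(x),\psi(x')]$ over some sub-interval of $[0,1]$, which reduces the supremum defining $d_T$ to a single application of $\sigma$-continuity to the length of $\eta_\epsilon$.
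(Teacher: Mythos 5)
Your proposal is correct and follows essentially the same route as the paper: the lower bound via $D\leq d_T$ and Lemma~\ref{lipestimate}, the upper bound by lifting to a near-geodesic $\eta_\epsilon$ in $X$ and using that $\psi\circ\eta_\epsilon$ must traverse the whole arc $[\psi(x),\psi(x')]$ in the tree $(T,D)$, and the tree property for $(T,d_T)$ by comparing arcs through the $1$-Lipschitz identity to $(T,D)$. The only cosmetic difference is that the paper picks a pair $p,p'$ attaining the supremum defining $d_T$ (using compactness of the arc) while you bound every sub-arc and take the supremum afterwards, which if anything is slightly cleaner.
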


\begin{proof}
It is $D \leq d_T$ by the definition of $d_T$. Hence $\operatorname{id}_T: (T,d_T) \to (T,D)$ is continuous and the first inequality follows from Lemma~\ref{lipestimate}. To obtain the second, let $\epsilon > 0$ and $\gamma_\epsilon : [0,1] \to X$ be a curve in $X$ connecting $x$ and $x'$ with $\operatorname{length}(\gamma_\epsilon) \leq d(x,x') + \epsilon$. Let $p,p' \in [\psi(x),\psi(x')] \subset T$ with $D(p,p') = d_T(\psi(x),\psi(x'))$. Because $(T,D)$ is a tree, the curve $\psi \circ \gamma_\epsilon$ goes through both $p$ and $p'$, respectively, $\gamma_\epsilon$ intersects both $p$ and $p'$ seen as subsets of $X$. As noted in the beginning of this section, $\varphi$ is $\sigma$-continuous with respect to $d$ and hence the definition of $D$ in \eqref{defD} and Lemma~\ref{lipestimate} imply
\begin{align*}
d_T(\psi(x),\psi(x')) & = D(p,p') \leq \diam_Y(\varphi(\im(\gamma_\epsilon))) \leq \sigma(\diam_X(\im(\gamma_\epsilon))) \\
 & \leq \sigma(\operatorname{length}(\gamma_\epsilon)) \leq \sigma(d(x,x') + \epsilon) \,.
\end{align*}
Because $\epsilon > 0$ is arbitrary and $\sigma$ is continuous, $d_T(\psi(x),\psi(x')) \leq \sigma(d(x,x'))$.

Since $X$ is path-connected and $\psi : (X,d) \to (T,d_T)$ is continuous as we have just seen, any two points in $(T,d_T)$ can be connected by an arc. Because of $D \leq d_T$ any arc in $(T,d_T)$ is also an arc in $(T,D)$, hence up to reparameterization there can only be one arc in $(T,d_T)$ connecting two given points. Hence $(T,d_T)$ is a tree.
\end{proof}

Let $(Z,d_Z)$ be a metric space. The \emph{$\sigma$-variation} of a curve $\gamma : [a,b] \to X$ is defined by
\[
V_\sigma(\gamma) \defl \sup \sum_{i = 0}^{n-1} \sigma^{-1}(d_Z(\gamma(t_{i+1}),\gamma(t_i))) \,,
\]
where the supremum is taken over all finite sequences $a = t_0 < \cdots < t_n = b$. This definition is clearly independent of the parameterization of $\gamma$. One can show that for a continuous curve $\gamma : [a,b] \to (Z,d_Z)$ with $V_\sigma(\gamma) < \infty$ there is a reparameterization $\tilde \gamma : [0,V_\sigma(\gamma)] \to Z$ of $\gamma$ with $t = V_\sigma(\tilde \gamma|_{[0,t]})$. This is a standard result and uses the fact that $\tau(t) \defl V_\sigma(\gamma|_{[0,t]})$ is continuous. For the readers convenience we include a proof here.

\begin{lem}
Let $\nu : [0,1]^2 \to [0,\infty[$ and define $\overbar \nu : [0,1] \to [0,\infty]$ by
\[
\overbar \nu(t) \defl \sup \sum_{i=0}^{n-1} \nu(t_{i+1},t_i) \,,
\]
where the supremum is taken over all finite sequences $0 = t_0 < \cdots < t_n = t$. If $\nu$ is continuous, $\nu(t,t) = 0$ for all $t$ and $\overbar \nu(1) < \infty$, then $\overbar \nu$ is continuous.
\end{lem}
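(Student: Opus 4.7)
The plan is to deduce continuity of $\overbar\nu$ from four observations: monotonicity, lower semi-continuity (hence left continuity), right continuity of a partner quantity $V_{[\cdot,1]}$, and a uniform insertion estimate which combines with the partner to force right continuity of $\overbar\nu$ itself.

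First, $\overbar\nu$ is non-decreasing, since appending $t$ to any partition of $[0,s]$ enlarges the sum by $\nu(t,s)\geq 0$ for $s<t$. Second, for each finite tuple $\pi=(t_0,\ldots,t_{n-1})$ with $0=t_0<\cdots<t_{n-1}$, the function $F_\pi(t)\defl\sum_{i=0}^{n-2}\nu(t_{i+1},t_i)+\nu(t,t_{n-1})$ is continuous on $(t_{n-1},1]$, and $\overbar\nu=\sup_\pi F_\pi$ is therefore lower semi-continuous; combined with monotonicity this yields left continuity. Third, setting $V_{[a,b]}\defl\sup\{\sum_i\nu(u_{i+1},u_i):a=u_0<\cdots<u_m=b\}$, an analogous perturbation argument gives right continuity of $V_{[\cdot,1]}$: for any partition $\{t=u_0<u_1<\cdots<u_n=1\}$ of $[t,1]$ and $s\in(t,u_1)$, replacing $u_0=t$ by $s$ yields a partition of $[s,1]$ whose sum differs only by $\nu(u_1,s)-\nu(u_1,t)\to 0$ as $s\to t^+$; combined with the monotonicity $V_{[s,1]}\leq V_{[t,1]}$ for $t<s$ this forces $V_{[s,1]}\to V_{[t,1]}$.

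The central and most delicate step is a uniform insertion estimate: for every $\epsilon>0$ there is $\eta>0$ such that $s\in(t,t+\eta)$ implies $\overbar\nu(s)\leq\overbar\nu(t)+V_{[t,s]}+\epsilon$. I would use uniform continuity of $\nu$ on the compact square $[0,1]^2$ (with $\nu$ vanishing on the diagonal) to pick $\eta>0$ such that $\nu(a,a')<\epsilon/2$ and $\sup_b|\nu(a,b)-\nu(a',b)|<\epsilon/2$ whenever $|a-a'|<\eta$. Given any partition $R_s=\{0=r_0<\cdots<r_m=s\}$ of $[0,s]$ and $k$ the largest index with $r_k\leq t$, the refinement $R_s\cup\{t\}$ splits at $t$ into sub-partitions of $[0,t]$ and $[t,s]$ giving sum at most $\overbar\nu(t)+V_{[t,s]}$, while the correction $\nu(r_{k+1},r_k)-\nu(t,r_k)-\nu(r_{k+1},t)$ for undoing this insertion is uniformly bounded by $\epsilon$ in absolute value, since $r_{k+1}\in(t,s]$ lies within $\eta$ of $t$. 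Taking a supremum over $R_s$ yields the desired inequality.

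Finally, concatenation of partitions gives $V_{[t,s]}+V_{[s,1]}\leq V_{[t,1]}$, and the right continuity of $V_{[\cdot,1]}$ established above forces $V_{[t,s]}\to 0$ as $s\to t^+$. Combined with the insertion estimate this yields $\limsup_{s\to t^+}\overbar\nu(s)\leq\overbar\nu(t)+\epsilon$ for every $\epsilon>0$, completing right continuity.
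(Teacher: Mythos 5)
Your proof is correct. The heart of the right-continuity argument---inserting $t$ into a near-optimal partition of $[0,s]$ and controlling the resulting correction term $\nu(r_{k+1},r_k)-\nu(t,r_k)-\nu(r_{k+1},t)$ by uniform continuity of $\nu$ together with $\nu(t,t)=0$---is the same device the paper uses. Where you genuinely diverge is in the remaining ingredient, the fact that the variation accumulated on $[t,s]$ tends to zero as $s\to t^+$: the paper establishes this only at $t=0$, via a recursive choice of a decreasing sequence $(t_n)$ satisfying $\sum_n \overbar\nu(t_n)\leq 2\overbar\nu(1)$, and then appeals to ``the first part of the proof'' for general $t$; you instead introduce the two-parameter variation $V_{[a,b]}$, prove right continuity of $s\mapsto V_{[s,1]}$ by a one-point perturbation, and deduce $V_{[t,s]}\leq V_{[t,1]}-V_{[s,1]}\to 0$ from superadditivity under concatenation. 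This is cleaner, works uniformly in $t$, and makes explicit a step the paper leaves implicit. Your left continuity via monotonicity plus lower semicontinuity of a supremum is likewise a tidier packaging of the paper's direct adjustment of the last partition point; the only thing worth flagging there is that each $F_\pi$ is defined only for $t>t_{n-1}$, so the supremum runs over an index set depending on $t$---harmless, since each $F_\pi$ extends lower semicontinuously (or since one can argue directly with a single near-optimal partition), but it deserves a sentence.
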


\begin{proof}
We will first show continuity at $0$. Obviously, $\overbar \nu(0) = 0$ and $\overbar\nu$ is increasing by definition. So it is enough to find a strictly decreasing sequence $(t_n)$ with $\lim_{n \to \infty} t_n = 0 = \lim_{n \to \infty} \nu(t_n)$. Without loss of generality we assume that $\overbar \nu(t) > 0$ for all $t > 0$. We start with $t_1 = 1$ and proceed recursively as follows. Given $t_n$, let $0 = t_n^0 < \dots < t_n^{k_n} = t_n$ be a strictly increasing sequence with
\[
\sum_{i=0}^{k_n-1} \nu(t_n^{i+1},t_n^i) > \frac{\overbar \nu(t_n)}{2} \,.
\]
Because $\lim_{a \to 0} \nu(b,a) + \nu(a,0) = \nu(b,0)$ for all $b$ we can assume that in the sequence above we have $0 < t_n^1 < \frac{t_n}{2}$ and 
\[
\sum_{i=1}^{k_n-1} \nu(t_n^{i+1},t_n^i) > \frac{\overbar \nu(t_n)}{2} \,.
\]
Note that the sum here starts from $i=1$. Set $t_{n+1} \defl t_n^1$. Obviously, $(t_n)$ is strictly decreasing and converges to $0$. Further, for all $l \in \N$
\[
\sum_{n=1}^l \overbar \nu(t_n) < 2 \sum_{n=1}^l \sum_{i=1}^{k_n-1} \nu(t_n^{i+1},t_n^i) \leq 2 \overbar \nu(1) \,.
\]
Because $\overbar \nu(1) < \infty$ we get that $\overbar \nu(t_n) \to 0$ for $n \to \infty$ and hence $\overbar \nu$ is continuous at $0$.

Now let $t > 0$ and we will show that $\overbar \nu$ is continuous from below at $t$. For any $n \in \N$ with $t > \frac{1}{n}$ let $0 = t_0 < \dots < t_{k_n} = t$ be a finite sequence with
\[
\sum_{i=0}^{k_n-1} \nu(t_{i+1},t_i) > \overbar \nu(t) - \frac{1}{n} \,.
\]
Because $\lim_{a \to t} \nu(b,a) + \nu(a,t) = \nu(b,t)$ for all $b$ we can assume that $\nu(t,t_{k_n-1}) < \frac{1}{n}$ and $0 < t - \frac{1}{n} < t_{k_n-1} < t$. Hence,
\[
\overbar \nu(t) \geq \overbar \nu(t_{k_n-1}) \geq \sum_{i=0}^{k_n-2} \nu(t_{i+1},t_i) > \overbar \nu(t) - \frac{2}{n} \,.
\]
We obtain $\lim_{n \to \infty} t_{k_n-1} = t$ and $\lim_{n \to \infty} \overbar \nu(t_{k_n-1}) = \overbar \nu(t)$. Since $\overbar \nu$ is increasing, this shows that it is continuous from below at $t$.

To see continuity from above, let $t < 1$ and $(t_n)$ be a strictly decreasing sequence with $\lim_{n \to \infty} t_n = t$. Without loss of generality we may assume that $\overbar \nu(t_n) > 0$ for all $n$. Let $0 = t_n^0 < \dots < t_n^{k_n} = t_n$ be a finite sequence with
\begin{equation}
\label{bound}
\sum_{i=0}^{k_n-1} \nu(t_n^{i+1},t_n^i) > \overbar \nu(t_n) - \frac{1}{n} \,.
\end{equation}
Since $t_n > t$ for each $n$, there is an index $i_n < k_n$ with $t \in [t_n^{i_n},t_n^{i_n + 1}[$. From $\lim_{n \to \infty} t_n = t$ it follows that $t_n^{i_n+1} > t$, $\lim_{n \to \infty} t_n^{i_n+1} \to t$ and since $\nu$ is uniformly continuous (the domain of definition is compact), 
\[
\lim_{n \to \infty} \nu(t_n^{i_n+1},t) + \nu(t,t_n^{i_n}) - \nu(t_n^{i_n+1},t_n^{i_n}) = 0 \,.
\]
For big $n$ we can therefore assume that \eqref{bound} is satisfied with $t$ being part of the sequence, say $t = t_n^{l_n}$ for some integer $l_n$. But then
\[
\overbar \nu(t_n) < \frac{1}{n} + \overbar \nu(t) + \sum_{i=l_n}^{k_n-1} \nu(t_n^{i+1},t_n^i) \,.
\]
This latter sum is over a partition of $[t,t_n]$ and as such tends to zero for $t_n \to t$ as we have already seen in the first part of the proof. Thus $\lim_{n \to \infty} \overbar \nu(t_n) = \overbar \nu(t)$ and $\overbar \nu$ is continuous from above at $t$.
\end{proof}

If $V_\sigma(\gamma) < \infty$ for $\gamma : [a,b] \to Z$, then $\tau : [a,b] \to [0,V_\sigma(\gamma)]$ defined by $\tau(v) \defl V_\sigma(\gamma|_{[0,v]})$ is increasing and continuous by the lemma above applied to the function $\nu(u,v) \defl \sigma^{-1}(d_Z(\gamma(u),\gamma(v)))$ defined on $[a,b]^2$. For $0 \leq s \leq t \leq V_\sigma(\gamma)$ let $a \leq u \leq v \leq b$ be such that $\tau(u) = s$ and $\tau(v) = t$. This is possible precisely because $\tau$ is continuous using the intermediate value theorem. Then
\begin{align*}
\sigma^{-1}(d_Z(\gamma(v),\gamma(u))) & \leq V_\sigma(\gamma|_{[u,v]}) \leq V_\sigma(\gamma|_{[0,v]}) - V_\sigma(\gamma|_{[0,u]}) \\
 & = \tau(v) - \tau(u) = t - s \,.
\end{align*}
This allows to define $\tilde \gamma : [0,V_\sigma(\gamma)] \to Z$ by $\tilde\gamma(\tau(v)) \defl \gamma(v)$ for all $v \in [a,b]$ and we obtain
\begin{equation}
\label{reparam}
d_Z(\tilde\gamma(t),\tilde\gamma(s)) \leq \sigma(t-s) \,.
\end{equation}
The new curve $\tilde \gamma$ is a continuous reparameterization of $\gamma$ and it follows from the definition of $\sigma$-variation that
\begin{equation}
\label{reparam2}
V_\sigma(\tilde \gamma|_{[0,t]}) = V_\sigma(\gamma|_{[0,v]}) = \tau(v) = t \,.
\end{equation}

For a curve $\gamma : [0,1] \to Z$ into a metric space $Z$ let $\cS_\gamma$ be the collection of all compact intervals $[a,b] \subset [0,1]$ with $a < b$, $\gamma(a) = \gamma(b)$ and for which there is no interval $[a',b']$ strictly containing $[a,b]$ with $\gamma(a') = \gamma(b')$. For a collection of disjoint intervals $\cS \subset \cS_\gamma$ define $\gamma_\cS : [0,1] \to Z$ by
\[
\gamma_\cS(t) \defl
\left\{
\begin{array}{ll}
\gamma(a)  & \mbox{if } t \in [a,b] \in \cS \,, \\
\gamma(t) & \mbox{otherwise} \,.
\end{array}
\right.
\]

\begin{lem}
	\label{makeinjective}
Let $\gamma : [0,1] \to Z$ be a $\sigma$-continuous curve and $\cS \subset \cS_\gamma$ be some maximal subset of disjoint intervals. Then $\gamma_\cS$ is $\sigma$-continuous and $\gamma_\cS(s) = \gamma_\cS(t)$ for $0 \leq s < t \leq 1$ implies that $\gamma_\cS$ is constant on $[s,t]$.

Moreover, $V_\sigma(\gamma_\cS) \leq 1$ and the reparameterization $\tilde \gamma_\cS : [0,V_\sigma(\gamma_\cS)] \to Z$ with respect to $\sigma$-variation is injective.
\end{lem}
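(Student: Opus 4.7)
The plan is to prove the four assertions in order, treating $\sigma$-continuity first, then the key constancy property, and finally deducing the variation bound and injectivity.

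For $\sigma$-continuity of $\gamma_\cS$, I would do a case analysis on whether $s<t$ lie inside intervals of $\cS$. In each case the value of $\gamma_\cS$ at $s$ (resp.\ $t$) agrees with $\gamma$ evaluated at one of the endpoints of the collapsed interval. Choosing the endpoints closest to each other (i.e.\ $b_i$ on the left side and $a_j$ on the right side), the $\sigma$-continuity of $\gamma$ gives $d_Z(\gamma_\cS(s),\gamma_\cS(t))=d_Z(\gamma(b_i),\gamma(a_j))\le \sigma(a_j-b_i)\le \sigma(t-s)$, with analogous estimates in the mixed and free configurations.

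For the constancy property, suppose towards contradiction that $\gamma_\cS(s)=\gamma_\cS(t)=p$ for some $s<t$ but $\gamma_\cS$ is not constant on $[s,t]$. Pick $u\in(s,t)$ with $\gamma_\cS(u)\neq p$ and, using continuity of $\gamma_\cS$ from step~1, set $s_0:=\sup\{v\le u:\gamma_\cS(v)=p\}$ and $t_0:=\inf\{v\ge u:\gamma_\cS(v)=p\}$. If $s_0$ lies in some $[a_i,b_i]\in\cS$, the extremality of $s_0$ forces $s_0=b_i$, and analogously $t_0=a_j$ or $t_0\notin I$; in either case $\gamma(s_0)=\gamma(t_0)=p$. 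The decisive claim is that $[s_0,t_0]$ is disjoint from every $[a_k,b_k]\in\cS$: a strict containment $[a_k,b_k]\subsetneq[s_0,t_0]$ would contradict maximality of $[a_k,b_k]$ in $\cS_\gamma$, and straddling is excluded because $s_0,t_0$ cannot lie in the interior of any interval of $\cS$. Either $[s_0,t_0]\in\cS_\gamma$ directly or, using Zorn's lemma, I extend it to a maximal super-interval $[a^*,b^*]\in\cS_\gamma$; the analogous maximality arguments rule out proper containment between $[a^*,b^*]$ and any $[a_k,b_k]$, and partial overlaps are excluded by combining the maximality of both in $\cS_\gamma$ with the disjointness of $[s_0,t_0]$ from $\cS$. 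Thus $\cS\cup\{[a^*,b^*]\}$ is a strictly larger pairwise disjoint subfamily of $\cS_\gamma$, contradicting the maximality of $\cS$.

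The variation bound is then immediate: for any subdivision $0=t_0<\cdots<t_n=1$, step~1 gives $\sigma^{-1}(d_Z(\gamma_\cS(t_{i+1}),\gamma_\cS(t_i)))\le t_{i+1}-t_i$, whose sum is $1$. For injectivity of $\tilde\gamma_\cS$, assume $\tilde\gamma_\cS(s)=\tilde\gamma_\cS(t)$ with $s<t$. Because $\tau(v):=V_\sigma(\gamma_\cS|_{[0,v]})$ is continuous and non-decreasing (by the preparatory lemma), we pick preimages $v_s,v_t\in[0,1]$ with $\tau(v_s)=s$, $\tau(v_t)=t$ and $\gamma_\cS(v_s)=\gamma_\cS(v_t)$; since $s<t$, necessarily $v_s<v_t$. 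The constancy property just proved gives $\gamma_\cS$ constant on $[v_s,v_t]$, so $V_\sigma(\gamma_\cS|_{[v_s,v_t]})=0=\tau(v_t)-\tau(v_s)=t-s$, contradicting $s<t$.

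The main obstacle is the extension step in the constancy argument: arguing that the maximal super-interval $[a^*,b^*]$ of $[s_0,t_0]$ is disjoint from every $[a_k,b_k]\in\cS$. Proper containments are precluded by maximality in $\cS_\gamma$, but ruling out partial overlaps requires leveraging the double maximality together with the fact that $[s_0,t_0]$ itself avoids~$\cS$; similarly the Zorn step needs care because limits of matching-endpoint intervals may pick up boundary intersections with elements of~$\cS$, which I would handle by showing such coincidences $a^*=b_k$ force $\gamma(a_k)=\gamma(b^*)$ and thereby a strictly larger matching-endpoint interval, again contradicting maximality in $\cS_\gamma$.
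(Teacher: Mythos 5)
Your treatment of the $\sigma$-continuity, the bound $V_\sigma(\gamma_\cS)\le 1$, and the injectivity of $\tilde\gamma_\cS$ matches the paper's proof step for step. The issue is the constancy claim. Your reduction via $s_0=\sup\{v\le u:\gamma_\cS(v)=p\}$ and $t_0=\inf\{v\ge u:\gamma_\cS(v)=p\}$ is sound and correctly isolates the crux: a matching-endpoint interval $[s_0,t_0]$ whose interior avoids $\bigcup\cS$ must contradict the maximality of $\cS$. (The paper reaches the same crux more directly by case analysis and disposes of it in one sentence, asserting that maximality of $\cS$ forces some element of $\cS$ to meet $[s,t]$; you have rightly recognized that this is the step that actually needs an argument.)

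But the argument you offer for it does not close. Maximality of $\cS$ only says that every element of $\cS_\gamma$ meets $\bigcup\cS$; it says nothing directly about $[s_0,t_0]$, which need not lie in $\cS_\gamma$. Your plan is to Zorn-extend $[s_0,t_0]$ to a maximal $[a^*,b^*]\in\cS_\gamma$ and show $[a^*,b^*]$ is disjoint from every $[a_k,b_k]\in\cS$. Proper containment is indeed excluded, but partial overlap is not: two distinct elements of $\cS_\gamma$ are merely both maximal under inclusion, and such intervals can genuinely interleave. Concretely, suppose $\gamma$ is injective except for the identifications $\gamma(a_k)=\gamma(b_k)$, $\gamma(a^*)=\gamma(b^*)$ and $\gamma(s_0)=\gamma(t_0)$ with $a_k<a^*<b_k<s_0<t_0<b^*$. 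Then $\cS_\gamma=\{[a_k,b_k],[a^*,b^*]\}$, the family $\cS=\{[a_k,b_k]\}$ is a maximal disjoint subfamily, $[s_0,t_0]$ avoids $\bigcup\cS$, and the unique maximal extension of $[s_0,t_0]$ is $[a^*,b^*]$, which overlaps $[a_k,b_k]$ in $[a^*,b_k]$ -- a set disjoint from $[s_0,t_0]$, so neither the "double maximality" nor the fact that $[s_0,t_0]$ avoids $\cS$ yields a contradiction. In this configuration $\gamma_\cS(s_0)=\gamma_\cS(t_0)$ while $\gamma_\cS$ is not constant on $[s_0,t_0]$, so the step you flag as the main obstacle cannot be repaired by the route you sketch for an arbitrary metric space $Z$ and an arbitrary maximal $\cS$; some additional input (in the paper's application, $Z$ is the tree $T$ and $\gamma=\psi\circ\eta$, which rules out this interleaving pattern) is needed to make the constancy claim go through.
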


\begin{proof}
Let $S \defl \bigcup \cS \subset [0,1]$. For a point $u \in S$ denote by $[a_u,b_u] \in \cS$ the unique interval with $u \in [a_u,b_u]$. Now let $s,t \in [0,1]$ with $s < t$. If both $s$ and $t$ are not in $S$, then
\[
d_Z(\gamma_\cS(t),\gamma_\cS(s)) = d_Z(\gamma(t),\gamma(s)) \leq \sigma(t - s) \,.
\]
If $s,t \in S$ we have three cases. If the two intervals $[a_s,b_s]$ and $[a_t,b_t]$ intersect they are the same and $\gamma_\cS(t) = \gamma_\cS(s)$. Otherwise, $b_s < a_t$ and since $\sigma$ is increasing
\[
d_Z(\gamma_\cS(t),\gamma_\cS(s)) = d_Z(\gamma(b_s),\gamma(a_t)) \leq \sigma(a_t - b_s) \leq \sigma(t - s) \,.
\]
If $s \in S$ and $t \notin S$, then
\[
d_Z(\gamma_\cS(t),\gamma_\cS(s)) = d_Z(\gamma(t),\gamma(b_s)) \leq \sigma(t - b_s) \leq \sigma(t - s) \,.
\]
The case $s \notin S$ and $t \in S$ is treated analogously.

Now assume that $\gamma_\cS(s) = \gamma_\cS(t)$ for $s < t$. If both $s$ and $t$ are in the complement of $S$, then $\gamma(s) = \gamma(t)$ and there is some $[a,b] \in \cS$ that intersects $[s,t]$ by the maximality of $\cS$. Neither $s$ or $t$ can be contained in $[a,b]$ because $s,t \notin S$, so $[a,b]$ is a proper subset of $[s,t]$, but this is not possible by the definition of $\cS_\gamma$. If $s \in S$ and $t \notin S$, then $b_s < t$ and
\[
\gamma(a_s) = \gamma_\cS(s) = \gamma_\cS(t) = \gamma(t) \,,
\]
contradicting $[a_s,b_s] \in \cS_\gamma$. The case $s \notin S$ and $t \in S$ is treated analogously. If $s,t \in S$ then $\gamma(a_s) = \gamma(b_t)$ and hence $[a_s,b_s] = [a_t,b_t]$ by the maximality of these intervals in $\cS_\gamma$.

For the last part first note that $V_\sigma(\gamma_\cS) \leq 1$ follows directly from the definition of $\sigma$-variation and the fact that $\gamma_\cS$ is $\sigma$-continuous. Let $0 \leq s \leq t \leq 1$ with $\tilde \gamma_\cS(s) = \tilde \gamma_\cS(t)$ and let $s = \tau(u)$ and $t = \tau(v)$ where $\tau(w) = V_\sigma(\gamma_\cS|_{[0,w]})$ and $\tilde \gamma_\cS(\tau(w)) = \gamma_\cS(w)$ for $w \in [0,1]$ as before. Then $\gamma_\cS(u) = \gamma_\cS(v)$ and from the first part we conclude that $\gamma_\cS$ is constant on $[u,v]$. This implies 
\[
s = \tau(u) = V_\sigma(\gamma_\cS|_{[0,u]}) = V_\sigma(\gamma_\cS|_{[0,v]}) = \tau(v) = t \,,
\]
and hence $\tilde \gamma_\cS$ is injective.
\end{proof}

Let $p,p' \in T$ be two different points and for a fixed $\epsilon > 0$ consider a curve $\eta : [0,1] \to X$ with $\operatorname{length}(\eta) \leq \operatorname{dist}_{(X,d)}(p,p') + \epsilon$ connecting the subsets $p$ and $p'$ of $X$. Reparameterizing $\eta$ linearly with respect to arc length we may assume that $\Lip(\eta) \leq \operatorname{dist}_{(X,d)}(p,p') + \epsilon$. From Lemma~\ref{lipestimate2} it follows that $\gamma \defl \psi \circ \eta$ satisfies
\[
d_T(\gamma(t),\gamma(s)) \leq \sigma(d(\eta(t),\eta(s))) \leq \sigma((\operatorname{dist}_{(X,d)}(p,p') + \epsilon) |t-s|) \,.
\]
The curve $\gamma_{\cS} : [0,1] \to (T,d_T)$ constructed in the lemma above for some maximal set $\cS \subset \cS_\gamma$ has the same continuity estimate as $\gamma$ and satisfies $\im(\gamma_{\cS})=[p,p']$ because of the second part of Lemma~\ref{makeinjective} and the fact that $T$ is a tree. Hence
\begin{equation}
\label{projectiongeoesic}
V_\sigma([p,p']) = V_\sigma(\gamma_{\cS}) \leq \operatorname{dist}_{(X,d)}(p,p') + \epsilon \,.
\end{equation}
With \eqref{reparam}, \eqref{reparam2}, Lemma~\ref{makeinjective} and taking the limit $\epsilon \to 0$ in \eqref{projectiongeoesic}, the reparameterization of $\gamma_\cS$ with respect to the $\sigma$-variation gives a curve $[p,p'] : [0,V_\sigma([p,p'])] \to (T,d_T)$ with the following properties:
\begin{equation}
\label{curveintree}
\left\{
\begin{array}{l}
d_T([p,p'](t),[p,p'](s)) \leq \sigma(|t-s|) \,, \\
V_\sigma([p,p']|_{[0,t]}) = t \,, \\
V_\sigma([p,p']) \leq \operatorname{dist}_{(X,d)}(p,p') \,, \\
\mbox{}[p,p'] \mbox{ is injective} \,.
\end{array}
\right.
\end{equation}
Note that a priori the curves $[p,p']$ we construct depend on $\gamma$ and $\cS$, but the second and forth property above uniquely determine a parameterization of the arc from $p$ to $p'$.

\begin{proof}[Proof of Theorem~\ref{treeintro}]
From Lemma~\ref{lipestimate2} and the definition of $d_T$ we get that $(T,d_T)$ is a tree with a metric monotone on arcs. From Theorem~\ref{topthm} and the discussion thereafter we see that $\dim(T,d_T) \leq 1$ (and of course $\dim(T,d_T) = 1$ if $T$ contains more than one point). The maps $\psi$ and $\overbar \varphi$ have the right continuity properties with respect to the length metric $d$ on $X$. By translating the estimates in Lemma~\ref{lipestimate2} back to the original metric $d_X$ using \eqref{geodesic} we obtain the continuity estimates in the statement of the theorem. It remains to construct the contractions.

Fix a point $p \in T$ and consider the map $\pi_p : T \times \R_{\geq 0} \to T$ defined by
\[
\pi_p(q,t) \defl [p,q](\min\{V(q),t\}) \,,
\]
where $V(q) \defl V_\sigma([p,q])$. If $t \geq C \operatorname{dist}_{(X,d_X)}(p,q)$, then $t \geq \operatorname{dist}_{(X,d)}(p,q)$ because $(X,d_X)$ is $C$-quasi-convex and $\pi_p(q,t) = q$ follows from \eqref{curveintree}. For $t_1,t_2 \geq 0$ and $q \in T$ it also follows from \eqref{curveintree} that
\[
d_T(\pi_p(q,t_1),\pi_p(q,t_2)) \leq \sigma(|\min\{V(q),t_1\} - \min\{V(q),t_2\}|) \leq \sigma(|t_1-t_2|) \,.
\]
Since $T$ is a tree, there is a unique point $q \in T$ in the intersection of the images of the arcs $[q_1,q_2]$, $[p,q_1]$ and $[p,q_2]$ for all choices of $q_1,q_2 \in T$. At equal times we have $\pi_p(q_1,t) = \pi_p(q_2,t)$ if $t \leq V(q)$ and $[\pi_p(q_1,t),\pi_p(q_2,t)]$ is contained in $[q_1,q_2]$ otherwise. Because $d_T$ is monotone on arcs, this leads to
\[
d_T(\pi_p(q_1,t),\pi_p(q_2,t)) \leq d_T(q_1,q_2) \,,
\]
for all $t \geq 0$. Combining the two estimates using the triangle inequality for $d_T$ we get
\[
d_T(\pi_p(q_1,t_1),\pi_p(q_2,t_2)) \leq d_T(q_1,q_2) + \sigma(|t_1-t_2|) \,.
\]
This finishes the proof of Theorem~\ref{treeintro}.
\end{proof}

We want to mention some implications of these contractions for obtaining continuous extensions.

\begin{cor}
	\label{extensioncor}
Let $f : S^{m-1} \to T$ for $m \geq 2$ and $L \geq 0$ be a constant such that $d_T(f(s),f(s')) \leq \sigma(L|s-s'|)$ for all $s,s' \in S^{m-1}$. Then there is an extension $F : \B^{m}(0,1) \to T$ such that $\im(f) = \im(F)$ and
\[
d_T(F(x),F(x')) \leq 2\sigma(2\pi L|x-x'|) \,,
\]
for all $x,x' \in \B^m(0,1)$.
\end{cor}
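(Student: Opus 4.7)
The plan is to cone $f$ off to a basepoint $p_0 \defl f(s_0)$ (for some fixed $s_0 \in S^{m-1}$) using the contractions $\pi_{p_0}$ supplied by Theorem~\ref{treeintro}. A preliminary step is to bound the $\sigma$-variation of each arc from $p_0$: for any $s \in S^{m-1}$, parameterize the shorter great-circle arc in $S^{m-1}$ from $s_0$ to $s$ by arc length $\alpha : [0, \ell] \to S^{m-1}$ with $\ell \leq \pi$. Since the chord is bounded by the arc on a unit sphere, the composition $f \circ \alpha$ satisfies $d_T(f \circ \alpha(t), f \circ \alpha(t')) \leq \sigma(L|t-t'|)$, hence has $\sigma$-variation at most $L\ell \leq L\pi$. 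Passing to the injective reparameterization from Lemma~\ref{makeinjective}, whose image is precisely the arc $[p_0, f(s)] \subset T$, and arguing as in \eqref{projectiongeoesic}, gives $V_\sigma([p_0, f(s)]) \leq L\pi$.

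Next, set $F(0) \defl p_0$ and
\[
F(x) \defl \pi_{p_0}\bigl(f(x/|x|),\, 2\pi L |x|\bigr) \quad \text{for } x \in \B^m(0,1) \setminus \{0\}.
\]
Because $2\pi L \geq L\pi \geq V_\sigma([p_0, f(s)])$, the stopping property of $\pi_{p_0}$ gives $F(x) = f(x)$ when $|x| = 1$, so $F$ extends $f$. For the image equality, $\im(f)$ is the continuous image of the connected set $S^{m-1}$ and hence a connected subset of the tree $T$; since any connected subset of a tree is arc-closed (from the homeomorphism with a geodesic tree provided by Theorem~\ref{topthm}), every arc $[p_0, f(s)]$ lies in $\im(f)$. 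Combined with $\im(F) \subset \bigcup_{s \in S^{m-1}} [p_0, f(s)]$, this gives $\im(F) = \im(f)$.

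Finally, for the modulus estimate, write $r_i \defl |x_i|$ and $s_i \defl x_i/r_i$ where defined. Two complementary bounds are at hand: the triangle inequality through $p_0$ yields
\[
d_T(F(x_1), F(x_2)) \leq \sigma(2\pi L r_1) + \sigma(2\pi L r_2),
\]
while property~(3) of $\pi_{p_0}$ combined with the $\sigma$-continuity of $f$ gives
\[
d_T(F(x_1), F(x_2)) \leq \sigma(L|s_1 - s_2|) + \sigma(2\pi L|x_1 - x_2|).
\]
Together with the elementary estimates $|r_1 - r_2| \leq |x_1 - x_2|$ and $|s_1 - s_2| \leq 2|x_1 - x_2|/\max(r_1, r_2)$, a case split on whether $\max(r_1, r_2)$ exceeds $1/\pi$ or is smaller than $|x_1 - x_2|$ then yields $d_T(F(x_1), F(x_2)) \leq 2\sigma(2\pi L|x_1 - x_2|)$. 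The main technical point is the intermediate range where $\max(r_1, r_2) \in (|x_1 - x_2|, 1/\pi)$; here one invokes the sharper chord estimate $|s_1 - s_2| \leq |x_1 - x_2|/\sqrt{r_1 r_2}$ together with $\min(r_1, r_2) \geq \max(r_1, r_2) - |x_1 - x_2|$ to close the argument.
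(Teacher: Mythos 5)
Your construction differs from the paper's in one essential respect: you cone $f$ directly to $p_0$ over the whole ball, $F(x) = \pi_{p_0}(f(x/|x|),2\pi L|x|)$, whereas the paper inserts a collar, setting $F \equiv p$ on $\B^m(0,1/2)$ and using the cone parameter $\pi L\max\{0,2|x|-1\}$ on the annulus. The preliminary variation bound $V_\sigma([p_0,f(s)]) \leq \pi L$, the boundary condition, and the image identity are all fine and match the paper. The gap is in the modulus estimate, precisely in the intermediate range you flag, and the sharper chord estimate does not close it. Your bound B controls $d_T(F(x_1),F(x_2))$ by $\sigma(L|s_1-s_2|) + \sigma(2\pi L|r_1-r_2|)$, so to reach $2\sigma(2\pi L|x_1-x_2|)$ you need $|s_1-s_2| \leq 2\pi|x_1-x_2|$ (recall $\sigma$ is merely continuous and strictly increasing; no doubling or concavity is available, so you cannot trade $\sigma(L|s_1-s_2|)$ for a multiple of $\sigma$ at a smaller argument). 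The estimate $|s_1-s_2| \leq |x_1-x_2|/\sqrt{r_1 r_2}$ gives this only when $r_1 r_2 \geq 1/(4\pi^2)$, and the information $\min(r_1,r_2) \geq \max(r_1,r_2) - |x_1-x_2|$ does not force that. Concretely, take $r_1 = r_2 = \frac{1}{4\pi}$ and $|x_1-x_2| = \frac{1}{8\pi}$: then $\max(r_1,r_2) > |x_1-x_2|$ (so bound A is unavailable), $|r_1-r_2| = 0$, and $|s_1-s_2| = |x_1-x_2|/r_1 = \frac12 > 2\pi|x_1-x_2| = \frac14$, so bound B only yields $\sigma(L/2)$, which exceeds the target $2\sigma(L/4)$ for, say, convex $\sigma$. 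Nor is this merely a slack estimate: at equal times the only general control is $d_T(\pi_{p_0}(q_1,t),\pi_{p_0}(q_2,t)) \leq d_T(q_1,q_2)$, and $d_T(f(s_1),f(s_2))$ can genuinely be of order $\sigma(L|s_1-s_2|)$, so the coned map itself need not satisfy the stated modulus near the origin.

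The paper's collar is exactly what removes this problem: when both points lie in the cone region one has $|x|,|x'| \geq \frac12$, hence $\bigl||x|^{-1}x - |x'|^{-1}x'\bigr| \leq 4|x-x'| \leq 2\pi|x-x'|$ unconditionally, and when at least one point lies in the inner ball the radial estimate $\sigma(R(2|x|-1)) \leq \sigma(2R|x-x'|)$ alone suffices because $F$ is constant there. If you modify your $F$ accordingly (constant on $\B^m(0,1/2)$, cone parameter $\pi L(2|x|-1)$ on the annulus), the rest of your argument goes through and reproduces the paper's proof.
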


\begin{proof}
Fix some point $p \in \im(f)$ and consider the extension $F : \B^{m}(0,1) \to T$ defined by
\[
F(st) \defl \pi_p(f(s),R\max\{0,2t-1\}) \,,
\]
for $s \in S^{m-1}$, $t \in [0,1]$ and $R \defl L\pi$. Any two points in $S^{m-1}$ can be connected by a curve $\gamma$ in $S^{m-1}$ with $\operatorname{length}(\gamma) \leq \pi$. Using Lemma~\ref{lipestimate2} we get
\begin{align*}
V_\sigma(f \circ \gamma) & = \sup \sum_{i=0}^{n-1} \sigma^{-1}(d_T(f \circ \gamma(t_{i+1}),f \circ \gamma(t_i))) \\
 & \leq \sup \sum_{i=0}^{n-1} L|\gamma(t_{i+1}) - \gamma(t_i)| \\
 & \leq \pi L \,.
\end{align*}
Because $T$ is a tree, any arc $[f(s),f(s')]$ is covered by some curve $f \circ \gamma$ with $\gamma$ as above. Hence $V_\sigma([p,q]) \leq R$ for all $q \in \im(f)$ and further $F(s) = f(s)$ for all $s \in S^{m-1}$ follows from the definition of $\pi_p$. If $x,x' \in \B^{m}(0,1)$ with $|x|,|x'| \geq \frac{1}{2}$, then
\begin{align*}
d_T(F(x),F(x')) & = d_T(\pi_p(f(|x|^{-1}x),R(2|x|-1)),\pi_p(f(|x'|^{-1}x'),R(2|x'|-1))) \\
 & \leq d_T(f(|x|^{-1}x),f(|x'|^{-1}x')) + \sigma(2R|x - x'|) \\
 & \leq \sigma(L ||x|^{-1}x - |x'|^{-1}x'|) + \sigma(2R|x - x'|) \\
 & \leq 2\sigma(2\pi L|x-x'|) \,.
\end{align*}
If $|x| \geq \frac{1}{2} \geq |x'|$, then
\begin{align*}
d_T(F(x),F(x')) & = d_T(\pi_p(f(|x|^{-1}x),R(2|x|-1)),p) \\
	& \leq \sigma(R(2|x|-1)) \\
	& \leq \sigma(R(2|x|-2|x'|)) \\
	& \leq \sigma(2\pi L|x - x'|) \,.
\end{align*}
This shows the continuity property of $F$. Any arc $[p,q]$ with endpoints in $\im(f) \subset T$ is contained entirely in $\im(f)$ because $T$ is a tree and this set is connected. Hence $\im(F) = \im(f)$ by the construction of $\pi_p$.
\end{proof}

\section{H\"older maps}

In this section we want to proof Theorem~\ref{hoelderthm}. First we establish a result that connects Property~\eqref{propertyt} with currents and winding numbers.

\begin{prop}
	\label{windvanishprop}
Let $X$ be a quasi-convex metric space with $H_1(X) = 0$ or $H_1^{\Lip}(X) = 0$ and $\varphi : X \to Y$ be a H\"older continuous map of regularity $\alpha > \frac{1}{2}$. Then $\varphi$ has Property~\eqref{propertyt} if and only if $(\varphi \circ \gamma)_\# \llbracket S^1\rrbracket = 0$ for all closed Lipschitz curves $\gamma : S^1 \to X$.

Moreover, if $Y = \R^2$, then $\varphi$ has Property~\eqref{propertyt} if and only if for all closed Lipschitz curves $\gamma : S^1 \to X$, the winding number function $q \mapsto \w_{\varphi \circ \gamma}(q)$ vanishes for almost every $q \in \R^2$.
\end{prop}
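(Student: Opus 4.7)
The plan is to split the equivalence into its two directions; the main inputs are the tree factorization of Theorem~\ref{treeintro} and some standard facts about metric $1$-currents.

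\textbf{Forward direction.} Assume Property~\eqref{propertyt}. By Theorem~\ref{treeintro} we have $\varphi = \overbar \varphi \circ \psi$ with $\psi : X \to T$ taking values in a tree of topological dimension at most $1$ and $\overbar \varphi : T \to Y$ being $1$-Lipschitz. For any closed Lipschitz $\gamma : S^1 \to X$ the composition $\gamma' \defl \psi \circ \gamma$ is H\"older of exponent $\alpha > \frac{1}{2}$, so the pushforward $\gamma'_\# \llbracket S^1 \rrbracket \in \cD_1(T)$ is defined via \cite[Theorem~4.3]{Z} and satisfies
\[
(\varphi \circ \gamma)_\# \llbracket S^1 \rrbracket = \overbar \varphi_\# \gamma'_\# \llbracket S^1 \rrbracket
\]
by functoriality. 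I would show $\gamma'_\# \llbracket S^1 \rrbracket = 0$ by approximating $\gamma'$ in H\"older norm by closed curves $\gamma'_n \defl \pi_n \circ \gamma'$ landing in finite subtrees $T_n \subset \im(\gamma')$, where $\pi_n : T \to T_n$ is the nearest-point projection onto the vertices of a $\frac{1}{n}$-net. Since trees are $0$-hyperbolic, $\pi_n$ is $1$-Lipschitz, hence $\gamma'_n \stackrel{\alpha}{\longrightarrow} \gamma'$. On the finite tree $T_n$ every edge disconnects, so a closed curve must traverse it with net signed multiplicity zero; an edgewise Young-integral computation (using the piecewise linear structure of Lipschitz pairs on edges) then gives $(\gamma'_n)_\# \llbracket S^1 \rrbracket = 0$, and Theorem~\ref{StieltjesLemma}(3) transports the vanishing to the limit.

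\textbf{Backward direction.} For the converse I argue the contrapositive. Assume $(\varphi \circ \gamma)_\# \llbracket S^1 \rrbracket = 0$ for every closed Lipschitz $\gamma$ and suppose \eqref{propertyt} fails at some $x, x' \in X$ with $\varphi(x) \neq \varphi(x')$. Since $X$ is quasi-convex the Lipschitz reformulation of \eqref{propertyt} is equivalent to the original, so
\[
\bigcap_{\gamma} \im(\varphi \circ \gamma) = \{\varphi(x), \varphi(x')\},
\]
the intersection running over Lipschitz curves $\gamma : [0,1] \to X$ from $x$ to $x'$. For any two such curves $\gamma_0, \gamma_1$ the concatenation $\gamma_0 \ast \overbar{\gamma_1}$ is a closed Lipschitz curve and the vanishing hypothesis, combined with the fact that the pushforward splits along the concatenation, yields
\[
(\varphi \circ \gamma_0)_\# \llbracket [0,1] \rrbracket = (\varphi \circ \gamma_1)_\# \llbracket [0,1] \rrbracket \defr \Phi_{x,x'}.
\]
Thus $\Phi_{x,x'} \in \cD_1(Y)$ is independent of the path, has boundary $\partial \Phi_{x,x'} = \delta_{\varphi(x')} - \delta_{\varphi(x)} \neq 0$, and has support contained in every $\im(\varphi \circ \gamma)$, hence in $\{\varphi(x), \varphi(x')\}$.

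The contradiction then comes from the standard fact that a metric $1$-current whose support has topological dimension zero must vanish: one can test $\Phi_{x,x'}$ against any admissible pair $(f,g)$ by replacing $g$ with $g_\delta$ that is constantly equal to $g(\varphi(x))$ on $\B_Y(\varphi(x),\delta)$, constantly equal to $g(\varphi(x'))$ on $\B_Y(\varphi(x'),\delta)$, equal to $g$ outside $\B_Y(\varphi(x),2\delta) \cup \B_Y(\varphi(x'),2\delta)$, and Lipschitz-interpolated in between. These $g_\delta$ are uniformly Lipschitz, converge uniformly to $g$, and are locally constant on a neighbourhood of $\spt \Phi_{x,x'}$; hence $\Phi_{x,x'}(f, g_\delta) = 0$, and Theorem~\ref{StieltjesLemma}(3) gives $\Phi_{x,x'}(f, g) = 0$ in the limit. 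So $\Phi_{x,x'} = 0$, $\partial \Phi_{x,x'} = 0$, contradicting $\varphi(x) \neq \varphi(x')$. This establishes \eqref{propertyt}. The $\R^2$ moreover statement is then immediate from Lemma~\ref{windingcurrents}: $\llbracket \w_{\varphi\circ\gamma} \rrbracket$ is the unique compactly supported filling of $(\varphi\circ\gamma)_\# \llbracket S^1 \rrbracket$, so the two currents vanish simultaneously.

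The main obstacle will be proving the vanishing of the tree pushforward at the threshold $\alpha > \frac{1}{2}$. A Stokes-type argument via Corollary~\ref{extensioncor}, extending $\gamma'$ to a H\"older disk $F : \B^2 \to T$ and invoking $\cD_2(T) = 0$ on a $1$-dimensional target, would need to push a $2$-current forward under a H\"older map and so require $\alpha > \frac{2}{3}$. Staying strictly within $1$-dimensional pushforwards and relying on finite-subtree approximation together with Young-integral continuity is what makes the argument work at $\alpha > \frac{1}{2}$.
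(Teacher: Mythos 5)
Your backward direction (vanishing pushforwards imply Property~\eqref{propertyt}) is essentially the paper's argument: path-independence of $(\varphi\circ\eta)_\#\llbracket 0,1\rrbracket$, nonzero boundary, support trapped in $\{\varphi(x),\varphi(x')\}$, and the fact that a nonzero metric $1$-current cannot have finite support (your explicit $g_\delta$ construction is a reproof of \cite[Lemma~3.2]{L}; note that the vanishing $\Phi_{x,x'}(f,g_\delta)=0$ requires the locality axiom for the current $\Phi_{x,x'}$, not just an unwinding of the Young integral, since $g_\delta\circ\varphi\circ\eta$ is not constant in $t$). The deduction of the $\R^2$ statement from the uniqueness of the compactly supported filling in Lemma~\ref{windingcurrents} is also fine. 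The problem is the forward direction. Your reduction to finite subtrees $T_n$ is sound in its framework: the gate projection onto a subtree is $1$-Lipschitz because $[\pi_n(q),\pi_n(q')]\subset[q,q']$ and $d_T$ is monotone on arcs (this, not $0$-hyperbolicity of a geodesic metric, is what you must invoke, since $(T,d_T)$ is not geodesic), and Theorem~\ref{StieltjesLemma}(3) does transport the vanishing to the limit. But the crux --- that $(\gamma'_n)_\#\llbracket S^1\rrbracket=0$ for a \emph{closed H\"older curve} in a finite tree --- is asserted, not proved. ``Net signed multiplicity of traversal of an edge'' is not defined for a curve that is merely H\"older: the preimage of an open edge is a countable union of intervals on which the curve need not traverse the edge monotonically or even completely, there is no BV structure to produce a Banach indicatrix, and summing Young integrals over countably many visits plus a residual Cantor-type set of vertex times is exactly the kind of step that fails below Lipschitz regularity. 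As written, this is a genuine gap at the one point where the threshold $\alpha>\frac12$ has to be earned.

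The gap is fillable, but the natural way to fill it is the argument you explicitly set aside. Your stated reason for avoiding Corollary~\ref{extensioncor} --- that it ``would need to push a $2$-current forward under a H\"older map and so require $\alpha>\frac23$'' --- misreads what the extension is for. No $2$-current is ever pushed forward: the disk extension $F:\B^2(0,1)\to T$ with $\im(F)=\im(\psi\circ\gamma)$ is used purely topologically, to identify $\w_{g\circ\psi\circ\gamma}(q)$ with $\degr{g\circ F}{\oB^2(0,1)}{q}$ for Lipschitz $g:T\to\R^2$; the degree vanishes off $\im(g\circ F)\subset\im(g\circ\psi\circ\gamma)$, and that image has Lebesgue measure zero because it is the image of $S^1$ under a map of H\"older regularity $\alpha>\frac12$ (Hausdorff dimension at most $1/\alpha<2$). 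Then $(\psi\circ\gamma)_\#\llbracket S^1\rrbracket(g_1,g_2)=\int_{\R^2}\w_{g\circ\psi\circ\gamma}=0$ by Lemma~\ref{windingcurrents}, for all Lipschitz $g=(g_1,g_2)$, which is exactly the conclusion you want and works at $\alpha>\frac12$. If you insist on the finite-tree route, you still end up needing the same input on each $T_n$ (contract $\gamma'_n$ inside $T_n$, so that $(f,g)\circ\gamma'_n$ is null-homotopic in the measure-zero set $(f,g)(T_n)$ and its winding number vanishes a.e.), so the detour buys nothing. Either way, replace the ``edgewise multiplicity'' claim by the extension-plus-measure-zero argument.
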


\begin{proof}
First assume that $(\varphi \circ \gamma)_\# \llbracket S^1\rrbracket = 0$ for all closed Lipschitz curves $\gamma : S^1 \to X$. Note that since $X$ is quasi-convex any curve in $X$ can be uniformly approximated by Lipschitz curves. So if we show Property~\eqref{propertyt} for Lipschitz curves, we have it for all curves. Fix two points $x,x' \in X$ with $\varphi(x) \neq \varphi(x')$ and let $\eta : [0,1] \to X$ be a Lipschitz curve connecting $x$ with $x'$. By the discussion before Lemma~\ref{windingcurrents} the current $(\varphi \circ \eta)_\#\llbracket 0,1 \rrbracket \in \cD_1(Y)$ is well defined and
\[
\partial \left((\varphi \circ \eta)_\#\llbracket 0,1 \rrbracket \right) = (\varphi \circ \eta)_\# \left( \partial \llbracket 0,1 \rrbracket \right) = \llbracket \varphi(x') \rrbracket - \llbracket \varphi(x) \rrbracket \neq 0 \,.
\]
This shows that $(\varphi \circ \eta)_\#\llbracket 0,1 \rrbracket \neq 0$. A nonzero metric current $S \in \cD_1(Y)$ as defined in \cite{L} can't be supported on finitely many points because $S(f,g) = 0$ if $g$ is locally constant on $\spt(S)$, \cite[Lemma~3.2]{L}. For another argument, a finite metric space has Nagata dimension zero, but the Nagata dimension of $\spt(S)$ has to be at least the dimension of $S$ by \cite[Proposition~2.5]{Zt}. Therefore we can find a point $y \in \spt((\varphi \circ \eta)_\#\llbracket 0,1 \rrbracket) \setminus \{\varphi(x),\varphi(x')\}$. Let $\eta' : [0,1] \to X$ be any other Lipschitz curve connecting $x$ with $x'$. We define the closed Lipschitz curve $\gamma \defl \eta \ast \eta'^{-1} : S^1 \to X$. By assumption,
\[
0 = (\varphi \circ \gamma)_\# \llbracket S^1 \rrbracket = (\varphi \circ \eta)_\# \llbracket 0,1 \rrbracket - (\varphi \circ \eta')_\# \llbracket 0,1 \rrbracket \,.
\]
In particular, $y \in \spt((\varphi \circ \eta)_\#\llbracket 0,1 \rrbracket) = \spt((\varphi \circ \eta')_\#\llbracket 0,1 \rrbracket)$. By the definition of the push-forward and the support of currents it is clear that $y \in \spt((\varphi \circ \eta)_\#\llbracket 0,1 \rrbracket) \subset \im(\varphi \circ \eta)$ and also $y \in \im(\varphi \circ \eta')$. Since $\eta'$ was arbitrary, this shows Property~\eqref{propertyt} for $\varphi$.
	
Now assume that $Y = \R^2$ and $\w_{\varphi \circ \gamma} = 0$ almost everywhere for all Lipschitz curves $\gamma : S^1 \to X$. Lemma~\ref{windingcurrents} implies that $0 = \partial \llbracket \w_{\varphi \circ \gamma} \rrbracket = (\varphi \circ \gamma)_\# \llbracket S^1 \rrbracket$ and from the first part it follows that $\varphi$ has Property~\eqref{propertyt}. On the other hand, if $\varphi : X \to \R^2$ has Property~\eqref{propertyt}, it follows from Theorem~\ref{treeintro} that there is a tree $(T,d_T)$ and maps $\psi : X \to T$, $\overbar \varphi : T \to \R^2$ with $\varphi = \overbar \varphi \circ \psi$. Let $\gamma : S^1 \to X$ be a closed Lipschitz curve. Using Corollary~\ref{extensioncor} we obtain a continuous extension $\Gamma : \B^2(0,1) \to T$ of $\psi \circ \gamma$ with $\im(\Gamma) = \im(\psi \circ \gamma)$. Therefore $\im(\overbar \varphi \circ \Gamma) \subset \im(\varphi \circ \gamma)$. As a property of the mapping degree, $\degr{\overbar \varphi \circ \Gamma}{\oB^2(0,1)}{q} \neq 0$ implies that $q$ is in the image of $\overbar \varphi \circ \Gamma$. Since $\cH^2(\im(\overbar \varphi \circ \Gamma)) \leq \cH^2(\im(\varphi \circ \gamma)) = 0$ it follows 
\[
\w_{\varphi \circ \gamma}(q) = \degr{\overbar \varphi \circ \Gamma}{\oB^2(0,1)}{q} = 0 \,,
\]
for almost every $q$ (indeed for all $q \in \R^2 \setminus \im(\varphi \circ \gamma)$).

Finally assume that $\varphi : X \to Y$ has Property~\eqref{propertyt} for a general metric space $Y$. Theorem~\ref{treeintro} gives again a factorization through a tree as in the case $Y = \R^2$ above. Consider a closed Lipschitz curve $\gamma : S^1 \to X$. Since the quotient map $\psi : X \to T$ is H\"older continuous of regularity $\alpha > \frac{1}{2}$, the current $(\psi \circ \gamma)_\# \llbracket S^1 \rrbracket \in \cD_1(T)$ is well defined. Assume by contradiction that this current is nonzero. By the definition of metric currents this means that there are Lipschitz functions $g_1,g_2 : T \to \R$ with $0 \neq (\psi \circ \gamma)_\# \llbracket S^1 \rrbracket(g_1,g_2)$. Using the Lipschitz map $g = (g_1,g_2) : T \to \R^2$ this implies with Lemma~\ref{windingcurrents},
\[
0 \neq (\psi \circ \gamma)_\# \llbracket S^1 \rrbracket(g_1,g_2) = (g \circ \psi \circ \gamma)_\# \llbracket S^1 \rrbracket(x \, dy) = \int_{\R^2} \w_{g \circ \psi \circ \gamma}(q) \, dq \,.
\]
Hence $g \circ \psi : X \to \R^2$ does not have Property~\eqref{propertyt} by the case $Y = \R^2$ considered above. But $g \circ \psi$ factors though a tree by construction and therefore has property~\eqref{propertyt}, a contradiction.
\end{proof}

The assumption $\alpha > \frac{1}{2}$ is optimal in the sense that for $\eta \in \Hol^\alpha(S^1,\R^2)$ the winding number $\w_{\eta}(q)$ is defined for almost every $q \in \R^2$ precisely because $\im(\eta)$ is a set of Lebesgue measure zero. For $\alpha \leq \frac{1}{2}$ there are closed Peano curves $\eta$ with image $[0,1]^2$ for example and as such $\w_{\eta}(q)$ is not defined for any $q \in [0,1]^2$. It is also optimal for defining continuous extensions for currents to H\"older functions. For such an extension one wishes the continuity property as in Theorem~\ref{StieltjesLemma}. But it was already noticed by Young \cite{Y}, that for $\alpha \leq \frac{1}{2}$ there are sequences of smooth functions $f_n \stackrel{\alpha}{\longrightarrow} f$ and $g_n \stackrel{\alpha}{\longrightarrow} g$ such that $\int f_n \, dg_n$ doesn't converge. Proposition~\ref{windvanishprop} has some immediate consequences in combination with Theorem~\ref{treeintro}. In particular we can recover \cite[Theorem 5]{WY}. Note that we give a formulation with $H_1^{\Lip}(X)=0$ instead of $\pi_1^{\Lip}(X)=0$ which is a slightly weaker assumption by Hurewicz' theorem.

\begin{cor}
Let $\varphi : X \to Y$.
\begin{enumerate}
	\item If $X$ is a quasi-convex metric space with $H_1(X) = 0$ or $H_1^{\Lip}(X) = 0$, $Y = \R^2$, $\varphi$ is H\"older continuous of regularity $\alpha > \frac{1}{2}$ and $\cL^2(\im(\varphi)) = 0$, then $\varphi$ factors through a tree.
	\item If $X$ is a quasi-convex metric space with $H_1^{\Lip}(X) = 0$, $Y$ is purely $2$-unrectifiable and $\varphi$ is Lipschitz continuous, then $\varphi$ factors through a geodesic tree via Lipschitz maps.
\end{enumerate}
\end{cor}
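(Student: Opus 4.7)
The plan is to verify Property~\eqref{propertyt} in each case and then invoke Theorem~\ref{treeintro}; the Lipschitz-into-geodesic-tree refinement in (2) will then follow by inspection of the continuity estimates.

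For (1), by the $Y = \R^2$ case of Proposition~\ref{windvanishprop} it suffices to show $\w_{\varphi \circ \gamma} = 0$ almost everywhere for every closed Lipschitz curve $\gamma : S^1 \to X$. Starting from the filling identity \eqref{filling} available under $H_1^{\Lip}(X) = 0$, Stokes' theorem for metric currents rewrites it as
\[
\gamma_\# \llbracket S^1 \rrbracket \;=\; \sum_i n_i\, (\Gamma_i|_{S^1})_\# \llbracket S^1 \rrbracket
\]
in $\cD_1(X)$. Since a H\"older map of exponent $\alpha > \tfrac12$ pushes $1$-currents forward, applying $\varphi_\#$ yields
\[
(\varphi \circ \gamma)_\# \llbracket S^1 \rrbracket \;=\; \sum_i n_i\, (\varphi \circ \Gamma_i|_{S^1})_\# \llbracket S^1 \rrbracket.
\]
Each $\varphi \circ \Gamma_i : \B^2(0,1) \to \R^2$ is continuous with image in $\im(\varphi)$, a set of Lebesgue measure zero, so for almost every $q \in \R^2$ the boundary curve $\varphi \circ \Gamma_i|_{S^1}$ extends through $\B^2(0,1)$ while avoiding $q$ and hence has zero winding number around $q$. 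Lemma~\ref{windingcurrents} then forces $(\varphi \circ \Gamma_i|_{S^1})_\# \llbracket S^1 \rrbracket = \partial \llbracket \w_{\varphi \circ \Gamma_i|_{S^1}} \rrbracket = 0$, so $(\varphi \circ \gamma)_\# \llbracket S^1 \rrbracket = 0$, and one final application of Lemma~\ref{windingcurrents} gives $\w_{\varphi \circ \gamma} = 0$ almost everywhere. The $H_1(X) = 0$ hypothesis is handled identically, replacing Lipschitz disks by continuous singular $2$-simplices.

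For (2), we again target Property~\eqref{propertyt}, but now through the general criterion of Proposition~\ref{windvanishprop}: show $(\varphi \circ \gamma)_\# \llbracket S^1 \rrbracket = 0$ in $\cD_1(Y)$ for every closed Lipschitz $\gamma$. Because $\varphi$ is Lipschitz, $2$-currents push forward unconditionally, so \eqref{filling} transforms into
\[
(\varphi \circ \gamma)_\# \llbracket S^1 \rrbracket \;=\; \sum_i n_i\, \partial\, (\varphi \circ \Gamma_i)_\# \llbracket \B^2(0,1) \rrbracket .
\]
Each $(\varphi \circ \Gamma_i)_\# \llbracket \B^2(0,1) \rrbracket$ is a $2$-dimensional integral current in $Y$, and pure $2$-unrectifiability forces all such currents to vanish. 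Theorem~\ref{treeintro} applied with $\sigma(t) = C t$ then yields factorization maps $\psi$ and $\overbar\varphi$ that are Lipschitz with respect to $d_T$. Replacing $d_T$ by the intrinsic metric $d_T^{\rm geo}$ makes $(T,d_T^{\rm geo})$ a geodesic tree (homeomorphic to $(T,d_T)$ by Theorem~\ref{topthm}); $\overbar\varphi$ stays $1$-Lipschitz because $d_T \leq d_T^{\rm geo}$, and $\psi$ stays Lipschitz because for $\sigma(t)=Ct$ the $d_T$-length of the arc $[\psi(x),\psi(x')]$ equals $C\, V_\sigma([\psi(x),\psi(x')])$, bounded by a multiple of $d_X(x,x')$ via \eqref{projectiongeoesic} and quasi-convexity. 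The one nontrivial bookkeeping step throughout is that the algebraic filling \eqref{filling} must genuinely transport under $\varphi_\#$: for (1) this relies on the $\alpha>\tfrac12$ threshold of \cite{Z} for pushing $1$-currents by H\"older maps, while for (2) the Lipschitz hypothesis automatically provides the stronger push-forward needed for the $2$-currents in the filling.
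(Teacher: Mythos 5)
Your proposal is correct and follows essentially the same route as the paper: part (2) is nearly verbatim the paper's argument (push the filling \eqref{filling} forward, kill the $2$-currents by pure $2$-unrectifiability, then pass to the length metric on $T$), and part (1) is a current-theoretic elaboration of what the paper dismisses as ``obvious,'' namely that the winding numbers in Proposition~\ref{windvanishprop} vanish off $\im(\varphi)$. One minor caveat: the $H_1(X)=0$ case is not handled ``identically,'' since continuous singular $2$-chains do not push forward as currents; there one instead notes directly that for $q \notin \im(\varphi)$ the class $[\varphi\circ\gamma]$ vanishes in $H_1(\R^2\setminus\{q\})$ because $[\gamma]=0$ in $H_1(X)$, so $\w_{\varphi\circ\gamma}(q)=0$ for almost every $q$.
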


\begin{proof}
The first statement is obvious because any winding number function considered in Proposition~\ref{windvanishprop} vanishes outside the image of $\varphi$. To see the second, let $\Gamma : \B^2(0,1) \to X$ be a Lipschitz map. The current $(\varphi \circ \Gamma)_\# \llbracket \B^2(0,1) \rrbracket$ is a $2$-dimensional integral current in $Y$. Since $Y$ is purely $2$-unrectifiable, $(\varphi \circ \Gamma)_\# \llbracket \B^2(0,1) \rrbracket = 0$ and hence also,
\begin{equation}
\label{boundaryterm}
0 = \partial \left((\varphi \circ \Gamma)_\# \llbracket \B^2(0,1) \rrbracket\right) = (\varphi \circ (\Gamma|_{S^1}))_\# \llbracket S^1 \rrbracket \,.
\end{equation}
Since we assume that $H_1^{\Lip}(X) = 0$ it follows from \eqref{filling} that $(\varphi \circ \gamma)_\#\llbracket S^1 \rrbracket = 0$ for any closed Lipschitz curve $\gamma : S^1 \to X$. From the estimates of the distances in Theorem~\ref{treeintro}, the maps $\psi$ and $\overbar \varphi$ are Lipschitz and by switching to the length metric on $T$ we can also assume $(T,d_T)$ to be a length space. Note here that a length metric on a tree is indeed geodesic since the minimal length is attained by the arcs.
\end{proof}

With \eqref{twothirdsformula} we can give a proof of Theorem~\ref{hoelderthm} for the case $\alpha > \frac{2}{3}$. Assume $\varphi : X \to \R^2$ satisfies $\int \w_{\varphi \circ \gamma} = 0$ for all closed Lipschitz curves $\gamma$ in $X$. First consider some Lipschitz map $\Gamma : Q = [0,1]^2 \to X$ and let $\gamma$ be the restriction of $\Gamma$ to $\partial Q$. We want to show that $(\varphi \circ \gamma)_\# \llbracket \partial Q \rrbracket = 0$. For any square $R \subset Q$, Lemma~\ref{windingcurrents} implies
\[
\int_{\partial R} (\varphi \circ \Gamma)_x \, d(\varphi \circ \Gamma)_y = \int_{\R^2} \w_{\varphi \circ \Gamma|\partial R} = 0 \,.
\]
Hence for any $f \in \Lip(\R^2)$,
\begin{align*}
(\varphi \circ \Gamma)_\# \llbracket Q \rrbracket(f\, dx \wedge dy) & = I_Q(f \circ \varphi \circ \Gamma, (\varphi \circ \Gamma)_x, (\varphi \circ \Gamma)_y) \\
 & = \lim_{m\to \infty} \sum_{R \in \cP_m(Q)} f\circ \varphi \circ \Gamma (p_R) \int_{\partial R} (\varphi \circ \Gamma)_x \, (\varphi \circ \Gamma)_y \\
 & = 0 \,.
\end{align*}
Therefore $(\varphi \circ \Gamma)_\# \llbracket Q \rrbracket = 0$ and the same must hold for its boundary $(\varphi \circ \gamma)_\# \llbracket \partial Q \rrbracket = 0$. Since we assume that $H_1^{\Lip}(X) = 0$ we get from \eqref{filling} that ${\gamma}_\#\llbracket S^1 \rrbracket = 0$ for an arbitrary closed Lipschitz curve $\gamma : S^1 \to X$. Finally, Proposition~\ref{windvanishprop} implies the first part of Theorem~\ref{hoelderthm}.

This argument doesn't work for $\alpha \in \left]\frac{1}{2}, \frac{2}{3}\right]$ because we can't define the two-dimensional current $(\varphi \circ \Gamma)_\# \llbracket Q \rrbracket$ if $\varphi$ has this lower regularity. To circumvent this problem, we construct a functional close in spirit to $I_Q$ that makes sense also for this range of $\alpha$ and allows for evaluating a smooth test-function $f$ similar to the calculation above.

\subsection{Integration with second order terms}

In this subsection we consider H\"older maps $\varphi = (\varphi_1,\varphi_2) : Q \to \R^2$ of regularity $\alpha > \frac{1}{2}$ defined on a square $Q \subset \R^2$. We first fix some notation. As in the definition of $I_Q$ let $\cP_n(Q)$ be the partition of $Q$ into $4^n$ similar squares. For any square $R \subset Q$ fix some point $p_R \in R$ (the barycenter for example) and define
\begin{align*}
	X_{R} & \defl \varphi(p_R) \in \R^2 \,, \\
	\X^1_{R} & \defl \int_{\partial R} \varphi_1 \, d\varphi_2 \in \R \,, \\
	\X^2_{R} & \defl \frac{1}{2}\left(\int_{\partial R} (\varphi_1 - \varphi_1(p_R))^2 \, d\varphi_2, \int_{\partial R} \varphi_1 \, d(\varphi_2 - \varphi_2(p_R))^2 \right) \in \R^2 \,.
\end{align*}
Note that if we choose $p_R \in \partial R$, then all these terms depend only on the values of $\varphi$ on $\partial R$. A direct computation shows that
\begin{equation}
\label{roughidentity}
\X^2_{R} = \tilde\X_{R}^{2} - X_{R} \X^1_{R} \,,
\end{equation}
where
\[
\tilde\X_{R}^{2} \defl \frac{1}{2}\left(\int_{\partial R} \varphi_1^2 \, d\varphi_2,\int_{\partial R} \varphi_1 \, d\varphi_2^2\right) \,.
\]
These boundary terms are well defined as Riemann-Stieltjes-Young integrals by Theorem~\ref{StieltjesLemma} precisely because $\alpha > \frac{1}{2}$. Moreover we have the following a priori bounds that follow directly from results in \cite{Y} stated in Theorem~\ref{StieltjesLemma}. There is a constant $C_\alpha > 0$ such that for any square $R \subset Q$,
\begin{equation}
\label{aprioriest}
|\X^1_R| \leq C_\alpha \Hol^\alpha(\varphi)^2\diam(R)^{2\alpha} \,, \quad |\X^2_R| \leq C_\alpha \Hol^\alpha(\varphi)^3\diam(R)^{3\alpha} \,.
\end{equation}
To see the second estimate we just have to note that for $i = 1,2$,
\[
\Hol^\alpha((\varphi_i - \varphi_i(p_R))^2) \leq 2\sup_{x \in R}|\varphi_i(x) - \varphi_i(p_R)|\Hol^\alpha(\varphi_i) \leq 2 \Hol^\alpha(\varphi)^2 \diam(R)^\alpha \,.
\]

For $\alpha > \frac{2}{3}$ and $f \in \Lip(\R^2,\R)$ formula \eqref{twothirdsformula} reads
\begin{equation}
\label{mainformula1}
\int_{\R^2} f(x) \deg(\varphi,Q,x) \, dx = \lim_{n\to \infty} \sum_{R \in \cP_n(Q)} f(X_R) \X_{R}^1 \,.
\end{equation}
This can be interpreted as $\int_Q f \circ \varphi \, d\varphi_1 \wedge d\varphi_2$ even though the integrand is in general an ill-defined differential form for H\"older maps $\varphi$. We want to emphasize the close connection to the Riemann-Stieltjes integral and its extension to integration along rough paths. For a curve $X \in \Hol^\alpha([0,1],\R^n)$ with $\alpha > \frac{1}{2}$, the results of Young give meaning to $\int f \circ X \, dX$ by defining
\[
\int_0^1 f\circ X \, dX \defl \lim_{|\cP| \to 0} \sum_{[s,t] \in \cP} f(X_s)(X_t - X_s) \,.
\]
In order to solve a wide range of stochastic partial differential equations Lyons introduced rough paths \cite{Ly}. In this setting it is possible to integrate along rough paths of lower regularity $\alpha \leq \frac{1}{2}$. For instance if $\mathbf X = (X,\X)$ is a truncated rough path on $[0,1]$ of H\"older regularity $\alpha > \frac{1}{3}$, then
\begin{equation}
\label{roughpathintegral}
\int_0^1 f\circ X \, d\mathbf X \defl \lim_{|\cP| \to 0} \sum_{[s,t] \in \cP} f(X_s)(X_t - X_s) + Df(X_s)\X_{s,t}
\end{equation}
is well defined for $f \in C^{1,1}$, see e.g.\ \cite[Theorem~4.4]{FH} for a proof and defining properties of $\mathbf X$ in the notation used above. Similarly, in order to extend \eqref{mainformula1} for maps $\varphi$ with H\"older regularity $\alpha > \frac{1}{2}$ we make use of the second order terms $\X^2$. The following result is very similar to \eqref{roughpathintegral} but for maps defined on a square. 

\begin{thm}
\label{roughsurface}
Let $\varphi \in \Hol^\alpha(Q,\R^2)$. If $\alpha > \frac{2}{3}$ and $f \in \Lip(\R^2,\R)$, then
\[
\int_{\R^2} f(x) \deg(\varphi,Q,x) \, dx = \lim_{n\to \infty} \sum_{R \in \cP_n(Q)} f(X_R) \X_{R}^1 \,.
\]
If $\alpha > \frac{1}{2}$ and $f \in C^{1,1}(\R^2,\R)$, then
\[
\int_{\R^2} f(x) \deg(\varphi,Q,x) \, dx = \lim_{n\to \infty} \sum_{R \in \cP_n(Q)} f(X_R) \X_{R}^1 + Df(X_R)\X_{R}^2 \,.
\]
\end{thm}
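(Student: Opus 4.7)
The first formula (for $\alpha > 2/3$) follows directly from Lemma~\ref{windingcurrents}: applying \eqref{twothirdsformula} with $g_1(q) = q_x$ and $g_2(q) = q_y$ yields $\int f \w_{\varphi|\partial Q} = I_Q(f \circ \varphi, \varphi_1, \varphi_2)$, which by definition of $I_Q$ equals the stated limit, and $\w_{\varphi|\partial Q} = \deg(\varphi, Q, \cdot)$ a.e. The second formula is the real content and is a two-dimensional analogue of the Lyons rough-path integral \eqref{roughpathintegral}; my plan is to prove it by a sewing-type argument applied to $\Xi_R := f(X_R)\X_R^1 + Df(X_R)\X_R^2$ and $S_n := \sum_{R \in \cP_n(Q)} \Xi_R$: show $(S_n)$ is Cauchy and then identify the limit through Lipschitz approximation of $\varphi$.

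The key algebraic observation is that $\X^1$ and $\tilde\X^2$, being Riemann--Stieltjes--Young integrals over $\partial R$, are additive under the dyadic subdivision $R = R_1 \cup \cdots \cup R_4$ with $R_i \in \cP_1(R)$ because interior edges cancel. Combined with \eqref{roughidentity} this gives the crucial identity
\[
\X_R^2 - \sum_{i=1}^{4} \X_{R_i}^2 = \sum_{i=1}^{4} (X_{R_i} - X_R)\X_{R_i}^1.
\]
I would then expand $f(X_{R_i}) = f(X_R) + Df(X_R)(X_{R_i} - X_R) + O(|X_{R_i} - X_R|^2)$ using $f \in C^{1,1}$ and $|Df(X_{R_i}) - Df(X_R)| \lesssim |X_{R_i} - X_R|$ inside $\Xi_R - \sum_i \Xi_{R_i}$. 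The first-order contribution $-\sum_i Df(X_R)(X_{R_i}-X_R)\X_{R_i}^1$ coming from the expansion of $f$ pairs with $Df(X_R)(\X_R^2 - \sum_i \X_{R_i}^2)$ via the identity above and cancels \emph{exactly}, leaving only quadratic remainders; \eqref{aprioriest} then gives the near-additivity bound
\[
\Bigl|\Xi_R - \sum_{i=1}^{4} \Xi_{R_i}\Bigr| \le C\, \|f\|_{C^{1,1}}\, \Hol^\alpha(\varphi)^4\, \diam(R)^{4\alpha}.
\]
Summing over $\cP_n(Q)$ yields $|S_{n+1} - S_n| \le C'\, 2^{(2-4\alpha)n}$, a geometric series because $4\alpha > 2$; thus $(S_n)$ converges to some $S(\varphi)$.

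To identify $S(\varphi)$ with $\int f\, \deg(\varphi, Q, \cdot)$ I would first handle Lipschitz $\varphi$: by Stokes $\X_R^1 = \int_R \det D\varphi\, d\cL^2$, so $\sum_R f(X_R)\X_R^1$ is a Riemann sum converging to $\int_Q f \circ \varphi \det D\varphi = \int f\, \deg(\varphi, Q, \cdot)$ (area formula), while $|\X_R^2| \lesssim \Lip(\varphi)^3 \diam(R)^3$ forces the second-order contribution to vanish like $2^{-n}$. For general $\varphi \in \Hol^\alpha$ with $\alpha > 1/2$, smooth mollification gives Lipschitz $\varphi_k \to \varphi$ uniformly with $\sup_k \Hol^\alpha(\varphi_k) < \infty$; by the standard interpolation $[\varphi_k - \varphi]_{\alpha'} \lesssim \|\varphi_k - \varphi\|_\infty^{1-\alpha'/\alpha}\, \|\varphi_k - \varphi\|_\alpha^{\alpha'/\alpha}$ one has $\varphi_k \xrightarrow{\alpha'} \varphi$ for every $\alpha' \in (1/2, \alpha)$. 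The near-additivity bound, applied at regularity $\alpha'$, is uniform in $k$, while Theorem~\ref{StieltjesLemma}(3) gives $S_n(\varphi_k) \to S_n(\varphi)$ for each fixed $n$; hence $S(\varphi_k) \to S(\varphi)$. On the degree side, using a cutoff to reduce to $f$ of compact support and setting $F(x, y) := \int_0^y f(x, t)\, dt$, Lemma~\ref{windingcurrents} applied to $g = (x, F)$ gives $\int f \w_{\varphi|\partial Q} = \int_{\partial Q} \varphi_1\, d(F \circ \varphi)$, and Theorem~\ref{StieltjesLemma}(3) yields $\int f\, \deg(\varphi_k, Q, \cdot) \to \int f\, \deg(\varphi, Q, \cdot)$. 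The main obstacle is the exact cancellation in the near-additivity step: $\X^2$ is designed precisely so that the $\diam(R)^{3\alpha}$ remainders cancel identically, dropping the error to $\diam(R)^{4\alpha}$ and enabling the sewing argument for all $\alpha > 1/2$; carrying out this bookkeeping cleanly is the heart of the proof.
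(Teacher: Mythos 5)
Your proposal is correct and follows essentially the same route as the paper: the $\alpha>\frac23$ case via \eqref{twothirdsformula}, then a dyadic near-additivity (sewing) estimate in which the identity $\X_R^2-\sum_i\X_{R_i}^2=\sum_i(X_{R_i}-X_R)\X_{R_i}^1$ (equivalently \eqref{roughidentity} plus additivity of $\X^1$ and $\tilde\X^2$) produces the exact first-order cancellation and the $\diam(R)^{4\alpha}$ error, followed by identification of the limit through Lipschitz approximation. The only cosmetic difference is on the degree side, where you identify $\lim_k\int f\deg(\varphi_k,Q,\cdot)$ via the antiderivative trick $g=(x,F)$ and Theorem~\ref{StieltjesLemma}(3) rather than citing the weak convergence of degrees from \cite[Proposition~4.6]{Zt}; both are valid.
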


\begin{proof}
The identity for $\alpha > \frac{2}{3}$ is just a restatement of \eqref{mainformula1}. In order to show the second identity fix some $f \in C^{1,1}(\R^2,\R)$ and let $I_n(f,\varphi)$ be the $n$th approximation on the right-hand side of the second identity in the theorem. We first show that $(I_n(f,\varphi))$ is a Cauchy sequence in $\R$ and hence its limit exists. Let $R \subset Q$ be some square and denote by $R_1,R_2,R_3,R_4$ the partition of $R$ into four squares half the size. To simplify notation we will write the terms $p_R,X_R,\X^*_R$ without index and those relating to the square $R_i$ with index $i$. Let $L \geq 0$ be a common Lipschitz constant for $f$ and $Df$ and $H$ be an upper bound on the H\"older constant of $\varphi$ with respect to $\alpha$. We have
\begin{align}
	\nonumber
&	\max_i\{|f(X) - f(X_{i})|, \|Df(X) - Df(X_{i})\|\} \\
	\label{lipest1}
	& \qquad \qquad \leq L|X - X_{i}| \leq LH|p - p_{i}|^\alpha \leq LH\diam(R)^\alpha \,.
\end{align}
The mean value theorem implies that for any $i$ there is some $x_i \in [X,X_{i}]$ such that $f(X_{i}) - f(X) = Df(x_i)(X_{i}-X)$. Hence
\begin{align}
	\nonumber
|f(X) - f(X_{i}) + D f(X)(X_{i} - X)| & = |(D f(X) - Df(x_i))(X_{i} - X)| \\
	\label{lipest2}
 & \leq L|X_{i} - X|^2 \leq LH^2\diam(R)^{2\alpha} \,.
\end{align}
	
With \eqref{roughidentity} we can rewrite 
\begin{align*}
f(X) \X^1 + Df(X)\X^2 & = f(X) \X^1 + Df(X)(\tilde \X^{2} - X \X^1) \\
 & = (f(X) - D f(X)X) \X^1 + Df(X)\tilde \X^{2} \,.
\end{align*}
Since $\sum_{i=1}^4 \X_{i}^1 = \X^1$ and $\sum_{i=1}^4 \tilde \X_{i}^2 = \tilde \X^2$ we get,
\begin{align*}
& f(X) \X^1 + Df(X)\X^2 - \sum_{i=1}^4 f(X_{i}) \X_{i}^1 + Df(X_{i})\X_{i}^2 \\
& \quad = \sum_{i=1}^4 (f(X) - D f(X)X - (f(X_{i}) - D f(X_{i})X_{{i}})) \X_{i}^1 + (Df(X) - Df(X_{i}))\tilde\X_{i}^2 \\
& \quad = \sum_{i=1}^4 (f(X) - f(X_{i}) + Df(X)(X_{i} - X)) \X_i^1 \\
& \quad \qquad \quad + (D f(X_{i}) - Df(X))X_{{i}} \X_{i}^1 + (Df(X) - Df(X_{i}))\tilde\X_i^2 \\
& \quad = \sum_{i=1}^4 (f(X) - f(X_{i}) + Df(X)(X_{i} - X)) \X_{i}^1  + (Df(X) - Df(X_{i}))\X_{i}^2 \,.
\end{align*}
Applying the estimates in \eqref{lipest1}, \eqref{lipest2} and \eqref{aprioriest} to the identity above leads to
\begin{align*}
\Biggl|f(X) \X^1 & + Df(X)\X^2 - \sum_{i=1}^4 f(X_{i}) \X_{i}^1 + Df(X_{i})\X_{i}^2\Biggr| \\
 & \leq \sum_{i=1}^4 LH^2\diam(R)^{2\alpha}|\X_{i}^1| + LH\diam(R)^\alpha|\X_{i}^2| \\
 & \leq 8C_\alpha LH^4\diam(R)^{4\alpha} \,.
\end{align*}
The difference of successive approximations is estimated by summing over all $R \in \cP_n(Q)$ for some fixed $n \in \N$. This gives
\begin{equation}
\label{squareestimate}
|I_n(f,\varphi) - I_{n+1}(f,\varphi)| \leq 4^n CH^4 2^{-4\alpha n} = CH^42^{2n(1 - 2\alpha)} \,,
\end{equation}
for some constant $C$ depending on $\diam(Q)$, $\alpha$ and $L$. Since $\alpha > \frac{1}{2}$ we see that $(I_n(f,\varphi))$ is a Cauchy sequence in $\R$ and hence converges. Denote by $I(f,\varphi)$ its limit. From \eqref{squareestimate} it is straightforward to obtain the upper bound
\begin{equation}
\label{approximation}
|I(f,\varphi) - I_n(f,\varphi)| \leq C'H^4 2^{2n(1 - 2\alpha)} \,,
\end{equation}
for a constant $C'$ with the same dependencies as $C$. Let $(\varphi_k)$ be a sequence in $\Hol^\alpha(Q,\R^2)$ with $\varphi_k \to \varphi$ and $\sup_k \Hol^\alpha(\varphi_k) \leq H$. As finite sums over Riemann-Stieltjes-Young integrals and evaluations of $\varphi_k$ it follows from Theorem~\ref{StieltjesLemma} that $\lim_{k\to\infty}I_n(f,\varphi_k) = I_n(f,\varphi)$ for all $n$. Applying \eqref{approximation} we obtain
\begin{align*}
\limsup_{k\to\infty} |I(f,\varphi_k) - I(f,\varphi)| & \leq \limsup_{k\to\infty} \bigl(|I(f,\varphi_k) - I_n(f,\varphi_k)| \\
 & \qquad + |I_n(f,\varphi_k) - I_n(f,\varphi)| + |I_n(f,\varphi) - I(f,\varphi)| \bigr) \\
 & \leq 2C'H^4 2^{2n(1 - 2\alpha)} \,.
\end{align*}
This is true for all $n$ and hence $I(f,\varphi_k)$ converges to $I(f,\varphi)$. 
	
For Lipschitz maps the sum involving the terms $\X_{R}^2$ vanish in the limit since $|\X_{R}^2| \leq o(\diam(R)^2)$ by \eqref{aprioriest} and hence for Lipschitz maps the second formula in the theorem is a consequence of the first one. It follows as in the proof of \cite[Proposition~4.6]{Zt} that $x \mapsto \deg(\varphi,Q,x)$ is integrable in case $\varphi$ is in $\Hol^\alpha$ and any $\varphi$ can be approximated by a sequence of Lipschitz maps $(\varphi_k)$ in such a way that $\sup_k\Hol^\alpha(\varphi_k) < \infty$ and $x \mapsto \deg(\varphi_k,Q,x)$ converges weakly (as distributions in the sense of Schwartz) to $x \mapsto \deg(\varphi,Q,x)$. This proves the theorem.
\end{proof}

The setting in the theorem above is simpler than for rough paths in \eqref{roughpathintegral} because range and domain have the same dimension and we get the terms $\X^2$ for free. This fact shouldn't come as a surprise since a compactly supported $2$-dimensional current in $\R^2$ is uniquely determined by its boundary due to the constancy theorem for currents \cite{F} and we already know from Lemma~\ref{windingcurrents} that the filling of $\varphi_\#\llbracket \partial Q\rrbracket$ is obtained by integrating over the degree of $\varphi$. The situation for maps into higher dimensional spaces is different and new ideas are needed to extend Theorem~\ref{roughsurface} in case $\alpha \in \left]\frac{1}{2}, \frac{2}{3}\right]$ and the target $\R^2$ is replaced by $\R^n$ for $n \geq 3$.

With this construction we can give a proof of Theorem~\ref{hoelderthm} stated in the introduction.

\begin{proof}[Proof of Theorem~\ref{hoelderthm}]
If $\varphi$ has Property~\eqref{propertyt}, then Proposition~\ref{windvanishprop} implies that all the integrals in the statement of the theorem vanish.

To see the converse implication, assume that all the winding number integrals vanish as stated. Fix $Q = [0,1]^2$, let $\Gamma : Q \to X$ be some Lipschitz map and set $\tilde\varphi \defl \varphi \circ \Gamma : Q \to \R^2$ and $\gamma \defl \Gamma|_{\partial Q}$. By assumption and Lemma~\ref{windingcurrents} we have for all squares $R \subset Q$,
\[
\int_{\partial R} \tilde\varphi_x \, d\tilde\varphi_y = 0 \,,
\]
in case $\alpha > \frac{2}{3}$ and
\[
\int_{\partial R} \tilde\varphi_x \, d\tilde\varphi_y = \int_{\partial R} \tilde\varphi_x^2 \, d\tilde\varphi_y = \int_{\partial R} \tilde\varphi_x \, d\tilde\varphi_y^2 = 0 \,,
\]
in case $\alpha > \frac{1}{2}$. Theorem~\ref{roughsurface} shows that for smooth $f : \R^2 \to \R$,
\[
\int_{\R^2} f(q) \w_{\varphi \circ \gamma}(q) \, dq = 0 \,.
\]
Hence $\w_{\varphi \circ \gamma}$ vanishes almost everywhere and thus $\gamma_\#\llbracket \partial Q \rrbracket = 0$ by Lemma~\ref{windingcurrents}. Since we assume that $H_1^{\Lip}(X) = 0$ it follows from \eqref{filling} that $\gamma_\#\llbracket S^1 \rrbracket = 0$ holds for any closed Lipschitz curve $\gamma : S^1 \to X$. With Proposition~\ref{windvanishprop} we conclude that $\varphi$ has Property~\eqref{propertyt}.
\end{proof}

\subsection{Heisenberg group target}

The first Heisenberg group equipped with the Carnot-Carath\'eodory metric $(\He,d_{\cc})$ is bi-Lipschitz equivalent to $\R^3$ equipped with the \emph{Kor\'anyi metric}, 
\[
d_K(p,q) \defl \left[(|q_x-p_x|^2 + |q_y-p_y|^2)^2 + 16|q_z - p_z - \tfrac{1}{2}(p_xq_y - p_yq_x)|^2 \right]^\frac{1}{4} \,,
\]
see e.g.\ \cite[Subsection~2.2.1]{CDPT}. Since the statements of Theorem~\ref{heisenberggroupthm} do not depend on a change to a bi-Lipschitz equivalent metric, we will work with $(\R^3,d_{\text{K}})$ instead of $(\He,d_{\cc})$. It is rather direct to check that for any bounded subset $B \subset \R^3$ there is a constant $C > 0$ such that for all $p,q \in B$,
\[
C^{-1} d_{\Eucl}(p,q) \leq d_{\text{K}}(p,q) \leq C d_\Eucl(p,q)^\frac{1}{2} \,,
\]
where $d_\Eucl$ denotes the Euclidean metric. Along the proof of \cite[Lemma 3.2]{LZ} one can show that for curves $\gamma : [a,b] \to (\R^3,d_{\text{K}})$ and $f : [a,b] \to \R$ of H\"older regularity $\alpha > 1/2$,
\begin{equation}
	\label{lifting}
	\int_a^b f \, d\gamma_z = \frac{1}{2}\left[\int_a^b f \gamma_x \, d\gamma_y - \int_a^b f \gamma_y \, d\gamma_x \right] \,.
\end{equation}
If $\gamma$ is further a closed curve, this implies
\begin{equation}
\label{lifting2}
\int \gamma_x \, d\gamma_z = \frac{3}{4}\int \gamma_x^2 \, d\gamma_y, \qquad \int \gamma_y \, d\gamma_z = \frac{3}{4}\int \gamma_x \, d\gamma_y^2 \,.
\end{equation}
Here is a derivation of the first identity,
\begin{align*}
\int \gamma_x \, d\gamma_z & = \frac{1}{2}\left[\int \gamma_x^2 \, d\gamma_y - \int \gamma_x\gamma_y \, d\gamma_x \right] \\
 & = \frac{1}{2}\left[\int \gamma_x^2 \, d\gamma_y - \frac{1}{2}\int \gamma_y \, d\gamma_x^2 \right] = \frac{3}{4}\int \gamma_x^2 \, d\gamma_y \,.
\end{align*}
It is interesting to note that the terms $\int \gamma_x^2 \, d\gamma_y$ and $\int \gamma_x \, d\gamma_y^2$ that appear in \eqref{lifting2} are precisely those that are assumed to vanish in Theorem~\ref{hoelderthm} in case $\frac{1}{2} < \alpha \leq \frac{2}{3}$.
First we show the following lemma. 

\begin{lem}
	\label{heisenbergsquare}
Let $Q \subset \R^2$ be a square and $\varphi : (Q,d_\Eucl) \to (\R^3,d_{\text{K}})$ be H\"older continuous of regularity $\alpha > \frac{2}{3}$. Then $\tilde \varphi_\# \llbracket Q \rrbracket = 0$ for the H\"older map $\tilde \varphi : (Q,d_\Eucl) \to (\R^3,d_\Eucl)$ obtained by changing the metric on $\R^3$.
\end{lem}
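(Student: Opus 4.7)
The plan is to show that $\tilde\varphi_\#\llbracket Q\rrbracket\in\cD_2(\R^3)$ vanishes by checking that $\tilde\varphi_\#\llbracket Q\rrbracket(f\,dg_1\wedge dg_2)=0$ for arbitrary Lipschitz $f,g_1,g_2:\R^3\to\R$. By the construction of the H\"older push-forward (Lemma~\ref{twodimcurrent}), this evaluation equals $I_Q(f\circ\tilde\varphi,g_1\circ\tilde\varphi,g_2\circ\tilde\varphi)$. The strategy has two stages: first reduce to coordinate targets, then exploit the Heisenberg identities \eqref{lifting}--\eqref{lifting2} on each dyadic subsquare.

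For the reduction, I would approximate $\tilde\varphi$ by Lipschitz maps $\tilde\varphi_k$ with $\sup_k\Hol^\alpha(\tilde\varphi_k)<\infty$ and, if necessary, $g_1,g_2$ by smooth functions. Lemma~\ref{twodimcurrent}(2) combined with the Lipschitz chain rule for $\det D(g_1\circ\tilde\varphi_k,g_2\circ\tilde\varphi_k)$ yields
\[
I_Q(f\circ\tilde\varphi_k,g_1\circ\tilde\varphi_k,g_2\circ\tilde\varphi_k)=\sum_{1\le i<j\le 3}I_Q\bigl((fh_{ij})\circ\tilde\varphi_k,\varphi_k^i,\varphi_k^j\bigr),
\]
where $h_{ij}=\partial_ig_1\partial_jg_2-\partial_jg_1\partial_ig_2$ is Lipschitz; passing to the limit via Lemma~\ref{twodimcurrent}(3) (and a further smoothing for Lipschitz $g_\ell$) extends the identity to $\tilde\varphi$. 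It will therefore suffice to prove
\[
I_Q(h\circ\tilde\varphi,\tilde\varphi_i,\tilde\varphi_j)=\lim_{n\to\infty}\sum_{R\in\cP_n(Q)}h(\tilde\varphi(p_R))\int_{\partial R}\varphi_i\,d\varphi_j=0
\]
for every Lipschitz $h:\R^3\to\R$ and every pair $i,j\in\{x,y,z\}$.

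For each subsquare $R$, the map $\varphi|_{\partial R}$ is a closed H\"older curve in $(\R^3,d_{\text{K}})$ of regularity $\alpha>\tfrac{2}{3}$, so \eqref{lifting}--\eqref{lifting2} apply. Setting $f=1$ in \eqref{lifting} gives $2\int_{\partial R}d\varphi_z=\int_{\partial R}\varphi_x\,d\varphi_y-\int_{\partial R}\varphi_y\,d\varphi_x$, and the left-hand side vanishes by closedness. Young integration by parts on the closed curve gives $\int_{\partial R}\varphi_y\,d\varphi_x=-\int_{\partial R}\varphi_x\,d\varphi_y$; combining,
\[
\int_{\partial R}\varphi_x\,d\varphi_y=0,
\]
and by antisymmetry/IBP also $\int_{\partial R}\varphi_y\,d\varphi_x=0$ and $\int_{\partial R}\varphi_i\,d\varphi_i=0$. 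This makes the Riemann sums vanish identically in the index cases $(x,y),(y,x),(x,x),(y,y),(z,z)$.

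For $(x,z)$, identity \eqref{lifting2} combined with the vanishing above lets one replace $\varphi_x^2$ by $(\varphi_x-\varphi_x(p_R))^2$, so that Young's estimate as in \eqref{aprioriest} yields
\[
\Bigl|\int_{\partial R}\varphi_x\,d\varphi_z\Bigr|=\tfrac{3}{4}\Bigl|\int_{\partial R}(\varphi_x-\varphi_x(p_R))^2\,d\varphi_y\Bigr|\le C\diam(R)^{3\alpha}.
\]
Summing over $R\in\cP_n(Q)$ bounds the partial sum by $O(\|h\|_\infty\cdot 4^n\cdot 2^{-3\alpha n})=O(2^{(2-3\alpha)n})$, which tends to $0$ precisely because $\alpha>\tfrac{2}{3}$; the case $(y,z)$ is analogous and $(z,x),(z,y)$ follow by antisymmetry. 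This exhausts all pairs, so $I_Q(h\circ\tilde\varphi,\tilde\varphi_i,\tilde\varphi_j)=0$ and hence $\tilde\varphi_\#\llbracket Q\rrbracket=0$. The main obstacle I anticipate is carrying out the chain-rule reduction of the first stage rigorously in the H\"older setting, where one has to control the error terms along the Lipschitz approximation using Lemma~\ref{twodimcurrent}(3); by contrast, the geometric content of the second stage is concentrated entirely in the single Heisenberg identity $\int_{\partial R}\varphi_x\,d\varphi_y=0$, which is precisely where the hypothesis on $d_{\text{K}}$ enters.
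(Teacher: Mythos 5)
Your proposal is correct and follows essentially the same route as the paper: test $\tilde\varphi_\#\llbracket Q\rrbracket$ against (smoothed) coordinate $2$-forms, kill the $dx\wedge dy$ terms on each $\partial R$ via \eqref{lifting} with $f\equiv 1$ plus integration by parts, and handle the $dz$-terms through \eqref{lifting2}. The only divergence is the last step: where the paper identifies $\sum_R g(\varphi(p_R))\int_{\partial R}\varphi_x^2\,d\varphi_y$ with $2\,\tilde\varphi_\#\llbracket Q\rrbracket(xg\,dx\wedge dy)$ and reduces to the already-settled $dx\wedge dy$ case, you recenter to $(\varphi_x-\varphi_x(p_R))^2$ and use the Young bound $O(\diam(R)^{3\alpha})$ as in \eqref{aprioriest} so that the Riemann sums are $O(2^{n(2-3\alpha)})$; both conclusions are valid, and your variant has the small merit of displaying explicitly where $\alpha>\tfrac{2}{3}$ is used.
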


\begin{proof}
By a smoothing argument it is enough to show that $\tilde \varphi_\# \llbracket Q \rrbracket(\omega) = 0$ for any smooth differential form $\omega \in \Omega^2(\R^3)$. Using \eqref{twothirdsformula} and since $\omega$ can be written as $\sum_{i < j} g_{ij} \, dx_i \wedge dx_j$ for smooth functions $g_{ij}$ on $\R^3$, it is enough to show that for all indices $i < j$,
\[
\tilde \varphi_\# \llbracket Q \rrbracket(g_{ij} \, dx_i \wedge dx_j) = I_Q(g_{ij} \circ \varphi, \varphi_i,\varphi_j) = 0 \,.
\]
By the definition of $I_Q$ and \eqref{lifting2},
\begin{align*}
\tilde \varphi_\# \llbracket Q \rrbracket(g \, dx \wedge dz) & = I_Q(g \circ \varphi, \varphi_x,\varphi_z) \\
 & = \lim_{n \to \infty} \sum_{R \in \cP_n(Q)} g\circ\varphi(p_R) \int_{\partial R} \varphi_x \, d\varphi_z \\
 & = \lim_{n \to \infty} \frac{3}{4} \sum_{R \in \cP_n(Q)} g\circ\varphi(p_R) \int_{\partial R} \varphi_x^2 \, d\varphi_y \\
 & = \frac{3}{4} \tilde \varphi_\# \llbracket Q \rrbracket(g \, dx^2 \wedge dy) \\
 & = \frac{3}{2} \tilde \varphi_\# \llbracket Q \rrbracket(xg \, dx \wedge dy) \,.
\end{align*}
Similarly, $\tilde \varphi_\# \llbracket Q \rrbracket(g \, dy \wedge dz) = \frac{3}{2} \tilde \varphi_\# \llbracket Q \rrbracket(yg \, dx \wedge dy)$ and hence it remains to show that $\tilde \varphi_\# \llbracket Q \rrbracket(g \, dx \wedge dy) = 0$ for all smooth $g : \R^3 \to \R$. By setting $f \equiv 1$ in \eqref{lifting}, we get $\int_{\partial R} \varphi_x \, d\varphi_y = 0$ for all squares $R \subset Q$, and therefore $\tilde \varphi_\# \llbracket Q \rrbracket(g \, dx \wedge dy) = 0$ follows from the definition of $I_Q$.
\end{proof}

With this preparation we can give a proof of the remaining theorem in the introduction.

\begin{proof}[Proof of Theorem~\ref{heisenberggroupthm}]
Let $\varphi : (X,d_X) \to (\He,d_{\cc})$ be a H\"older map as in the statement of the theorem. In order to apply Theorem~\ref{treeintro} we will show that $\tilde \varphi : X \to \R^3$ as defined in the lemma above has Property~\eqref{propertyt}. Let $\gamma : \partial Q \to X$ be any closed Lipschitz curve defined on the boundary of some square $Q \subset \R^2$ and assume that there is a Lipschitz extension $\Gamma : Q \to X$. By Lemma~\ref{heisenbergsquare},
\[
0 = \partial((\tilde \varphi \circ \Gamma)_\# \llbracket Q \rrbracket) = (\tilde \varphi \circ \gamma)_\# \llbracket \partial Q \rrbracket \,.
\]
Since we assume $H_1^{\Lip}(X) = 0$ the same result for arbitrary closed Lipschitz curves is a consequence of \eqref{filling}. Proposition~\ref{windvanishprop} now implies that $\tilde \varphi$ has Property~\eqref{propertyt}. Because this property is purely topological, the same holds for $\varphi$ and Theorem~\ref{treeintro} applies.
\end{proof}

With Corollary~\ref{extensioncor} we can conclude that the higher homotopy groups $\pi^{\alpha}_k(\He,d_{\cc})$ for $k \geq 2$ are trivial in the category of H\"older continuous maps with regularity $\alpha > \frac{2}{3}$ similar to the conclusion in \cite{WY} with respect to Lipschitz maps.

%%%%%%%%%%%%%%%%%%%%%%%%%%%%%%%%%%%%%%%%%%%%%%%%%%%%%%%%%%%%%%%%%%%%%%%%%%%%%%%%%%%%%%%%%%%%%%%%%%%%%%%%%%%%%%%%


\begin{thebibliography}{99}

\bibitem{AK}
L. Ambrosio and B. Kirchheim,
Currents in metric spaces,
Acta Math. 185 (2000), 1--80.

\bibitem{B}
W. Blaschke, Kreis und Kugel, Veit, Leipzig, 1916.

\bibitem{BNQ}
H. Boedihardjo, H. Ni and Z. Qian,
Uniqueness of signature for simple curves,
Journal of Functional Analysis 267 (2014), 1778--1806.

\bibitem{BBI}
D. Burago, Y. Burago and S. Ivanov,
A course in metric geometry, Graduate Studies in Mathematics, Volume 33,
American Mathematical Society, Providence, 2001.

\bibitem{CDPT}
L. Capogna, D. Danielli, S. Pauls and J. Tyson,
An introduction to the Heisenberg group and the sub-Riemannian isoperimetric problem, Progress in Mathematics, Volume 259,
Birkh\"auser, Basel, 2007.

\bibitem{F}
H. Federer,
Geometric measure theory, Die Grundlehren der mathematischen Wissenschaften, Band 153,
Springer-Verlag New York Inc., New York, 1969.

\bibitem{FH}
P. K. Fritz and M. Hairer,
A Course on Rough Paths: With an Introduction
to Regularity Structures, Springer, 2014.

\bibitem{G}
M. Gromov,
Carnot-Carath\'eodory spaces seen from within, in Sub-Riemannian geometry, Progress in Mathematics, Volume 144,
Birkh\"auser, Basel, 1996, 79--323.

\bibitem{G2}
M. Gromov,
Metric structures for Riemannian and non-Riemannian spaces, Progress in Mathematics, Volume 152,
Birkh\"auser Boston Inc., Boston, 1999. With appendices by M. Katz, P. Pansu and S. Semmes.

\bibitem{L}
U. Lang,
Local currents in metric spaces,
J. Geom. Anal. 21 (2011), 683--742.

\bibitem{LS}
U. Lang, T. Schlichenmaier,
Nagata dimension, quasisymmetric embeddings, and Lipschitz extensions,
Int. Math. Res. Not. 58 (2005), 3625--3655.

\bibitem{LZ}
E. Le Donne and R. Z\"ust,
Some properties of H\"older surfaces in the Heisenberg group,
Illinois J. Math. 57 (2013), 229--249.

\bibitem{Ly}
T. J. Lyons,
Differential equations driven by rough signals,
Rev. Math. Iberoamericana. 14 (1998), 215--310.

\bibitem{OR}
E. Outerelo and J. M. Ruiz,
Mapping degree theory, Graduate Studies in Mathematics, Volume 108,
American Mathematical Society, Providence, 2009.

\bibitem{MO}
J. C. Mayer and L. G. Oversteegen,
A topological characterization of R-trees,
Trans. Amer. Math. Soc. 320 (1990), 395--415.

\bibitem{RS}
C. Riedweg and D. Sch\"appi,
Singular (Lipschitz) homology and homology of integral currents,
\href{http://arxiv.org/abs/0902.3831}{arXiv:0902.3831}, 2009.

\bibitem{WY}
S. Wenger and R. Young,
Lipschitz homotopy groups of the Heisenberg groups,
Geom. Funct. Anal. 24 (2014), 387--402.

\bibitem{Y}
L. C. Young,
An inequality of the {H}\"older type, connected with Stieltjes integration,
Acta Math. 67 (1936), 251--282.

\bibitem{Zt}
R. Z\"ust, Currents in snowflaked metric spaces, phd thesis, ETH Zurich, 2011.

\bibitem{Z}
R. Z\"ust,
Integration of {H}\"older forms and currents in snowflake spaces,
Calc. Var. and PDE 40 (2011), 99--124.

\end{thebibliography}
\end{document}